\newcommand{\bigplus} {{\vstretch{1.5}{\hstretch{1.5}{+}}}}
\newcommand{\bigminus}{{\vstretch{1.5}{\hstretch{1.5}{-}}}}
\newcommand{\oa}{m}
\newcommand{\trees}{\mathcal{T}}
\newcommand{\freeternary}{\mathcal{LT}}
\newcommand{\tpa}{\mathcal{TP\!A}}
\theoremstyle{definition}
\newtheorem{definition}{Definition}[section]
\newtheorem{remark}[definition]{Remark}
\theoremstyle{plain}
\newtheorem{lemma}[definition]{Lemma}
\newtheorem{theorem}[definition]{Theorem}
\begin{document}


\title[Gr\"obner bases for ternary partially associative operads]
{Gr\"obner bases and dimension formulas for ternary partially associative operads}

\author{Fatemeh Bagherzadeh}

\address{Department of Mathematics and Statistics, University of Saskatchewan, Canada}

\email{bagherzadeh@math.usask.ca}

\author{Murray Bremner}

\address{Department of Mathematics and Statistics, University of Saskatchewan, Canada}

\email{bremner@math.usask.ca}

\subjclass[2010]{Primary 18D50. Secondary 17A30, 17A40, 17A50}

\keywords{Nonsymmetric operads, ternary operations, partial associativity, Gr\"obner bases, dimension formulas}

\thanks{This research was supported by the Discovery Grant \emph{Algebraic Operads} (2016) 
from the Natural Sciences and Engineering Research Council of Canada (NSERC).
The authors thank Vladimir Dotsenko for sending us his results without proofs on Gr\"obner bases 
and dimension formulas for ternary partially associative operads (emails of 16 \& 26 February 2018) 
and for helpful comments on an earlier version of this paper}

\begin{abstract}
Dotsenko and Vallette discovered an extension to nonsymmetric operads
of Buchberger's algorithm for Gr\"obner bases of polynomial ideals.
In the free nonsymmetric operad with one ternary operation $({\ast}{\ast}{\ast})$,
we compute a Gr\"obner basis for the ideal generated by 
partial associativity $((abc)de) + (a(bcd)e) + (ab(cde)$. 
In the category of $\mathbb{Z}$-graded vector spaces with
Koszul signs, the (homological) degree of $({\ast}{\ast}{\ast})$ may be even or odd.
We use the Gr\"obner bases to calculate the dimension formulas for these operads.
\end{abstract}

\maketitle



\section{Introduction}

We consider nonsymmetric operads in the category of $\mathbb{Z}$-graded vector
spaces over a field of characteristic 0.
The product is the tensor product (with Koszul signs) and the coproduct is the direct sum.
Gr\"obner bases for operads were introduced by Dotsenko, Khoroshkin \& Vallette \cite{DK,DV};
see also \cite{BD}.

Let $\freeternary$ be the free nonsymmetric operad with one ternary operation 
$t = ({\ast}{\ast}{\ast})$.
Let $\alpha$ denote ternary partial associativity, which may be written 
as a tree polynomial,
using partial compositions, or
as a nonassociative polynomial:
\begin{equation}
\label{tpa}
\quad
\alpha = \!\!
\adjustbox{valign=m}{
\begin{xy}
(  0,  0 )*{} = "root";
( -3, -4 )*{} = "l";
(  0, -4 )*{} = "m";
(  3, -4 )*{} = "r";
{ \ar@{-} "root"; "l" };
{ \ar@{-} "root"; "m" };
{ \ar@{-} "root"; "r" };
( -6, -8 )*{} = "ll";
( -3, -8 )*{} = "lm";
(  0, -8 )*{} = "lr";
{ \ar@{-} "l"; "ll" };
{ \ar@{-} "l"; "lm" };
{ \ar@{-} "l"; "lr" };
\end{xy}
}
\bigplus
\adjustbox{valign=m}{
\begin{xy}
(  0,  0 )*{} = "root";
( -3, -4 )*{} = "l";
(  0, -4 )*{} = "m";
(  3, -4 )*{} = "r";
{ \ar@{-} "root"; "l" };
{ \ar@{-} "root"; "m" };
{ \ar@{-} "root"; "r" };
( -3, -8 )*{} = "ml";
(  0, -8 )*{} = "mm";
(  3, -8 )*{} = "mr";
{ \ar@{-} "m"; "ml" };
{ \ar@{-} "m"; "mm" };
{ \ar@{-} "m"; "mr" };
\end{xy}
}
\bigplus
\adjustbox{valign=m}{
\begin{xy}
(  0,  0 )*{} = "root";
( -3, -4 )*{} = "l";
(  0, -4 )*{} = "m";
(  3, -4 )*{} = "r";
{ \ar@{-} "root"; "l" };
{ \ar@{-} "root"; "m" };
{ \ar@{-} "root"; "r" };
(  0, -8 )*{} = "rl";
(  3, -8 )*{} = "rm";
(  6, -8 )*{} = "rr";
{ \ar@{-} "r"; "rl" };
{ \ar@{-} "r"; "rm" };
{ \ar@{-} "r"; "rr" };
\end{xy}
}
\qquad
\adjustbox{valign=m}{$
\begin{array}{l}
t \circ_1 t + t \circ_2 t + t \circ_3 t,
\\[1pt]
(({\ast}{\ast}{\ast}){\ast}{\ast}) +
({\ast}({\ast}{\ast}{\ast}){\ast}) +
({\ast}{\ast}({\ast}{\ast}{\ast})).
\end{array}
$}
\quad
\end{equation}
We compute a Gr\"obner basis for the ideal $\langle \alpha \rangle$
when $t$ has even (homological) degree so that Koszul signs are irrelevant,
and when $t$ has odd degree so that Koszul signs are essential.
We include details of the calculations to clarify the Gr\"obner basis algorithm
for nonsymmetric operads.
As an application, we calculate dimension formulas for the quotient operads.
Similar results have been obtained independently in unpublished work
of Vladimir Dotsenko.

For earlier work on partial associativity and its applications, 
see \cite{B,C,DMR,G1,G2,GR,MR1,MR2,R}.
Recent results of Dotsenko, Shadrin \& Vallette \cite{DSV} have shown that 
the ternary partially associative operad with an odd generator 
arises naturally in the homology of the poset of interval partitions into intervals 
of odd length and in certain De Concini-Procesi models of subspace arrangements \cite{DP} 
over the real numbers.


\section{Preliminaries}

\begin{definition}
An \textit{$\oa$-ary tree} is a rooted plane tree $p$ in which every node
has no children (\textit{leaf}) or $\oa$ children (\textit{internal node}).
The \textit{weight} $w(p)$ counts internal nodes;
the \textit{arity} $\ell(p) = 1 + w(p) (\oa{-}1)$ counts leaves
indexed $1, \dots, \ell(p)$ from left to right.
The \emph{basic tree} $t$ is the $m$-ary tree of weight 1.
Set $[n] = \{ 1, \dots, n \}$.
\end{definition}

\begin{definition}
If $n \equiv 1$ (mod $\oa{-}1$) then $\trees(n)$ is the set of $\oa$-ary trees of arity~$n$,
and $\trees$ is the disjoint union of the $\trees(n)$ for $n \ge 1$.
\end{definition}

\begin{definition}
If $p$, $q \in \trees$ then for $i \in [\ell(p)]$ the \textit{partial composition}
$p \circ_i q \in \trees$ is obtained by identifying leaf $i$ of $p$ with the root of $q$.
\end{definition}

\begin{lemma}
Starting with $t$, every $\oa$-ary tree of weight $w$ can be obtained by a sequence of $w{-}1$ partial compositions.
\end{lemma}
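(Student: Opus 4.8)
The plan is to argue by induction on the weight $w$. The base case $w = 1$ is immediate: the unique $\oa$-ary tree of weight $1$ is the basic tree $t$ itself, which is obtained from $t$ by the empty sequence of $w - 1 = 0$ partial compositions.

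For the inductive step, suppose $w \ge 2$ and that the claim holds for all smaller weights. Let $p$ be an $\oa$-ary tree of weight $w$. First I would locate a deepest internal node $v$ of $p$, that is, an internal node whose distance from the root is maximal. Because $v$ has maximal depth among internal nodes, none of its children can itself be internal, so all $\oa$ children of $v$ are leaves; hence the subtree of $p$ rooted at $v$ is a copy of the basic tree $t$. Deleting these $\oa$ leaves turns $v$ into a leaf and produces an $\oa$-ary tree $p'$ with exactly one fewer internal node, so $w(p') = w - 1$. If $v$ sits at leaf position $i$ in $p'$, then grafting $t$ back onto that leaf recovers $p$; that is, $p = p' \circ_i t$.

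By the induction hypothesis, $p'$ is obtained from $t$ by a sequence of $w(p') - 1 = w - 2$ partial compositions. Appending the single composition $\circ_i\, t$ then exhibits $p$ as a sequence of $(w - 2) + 1 = w - 1$ partial compositions, completing the induction.

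The only real obstacle is the combinatorial bookkeeping in the inductive step: one must check that a deepest internal node exists (clear once $w \ge 1$, since the root is then internal), that its children are all leaves so that its subtree is genuinely the atom $t$, and that excising it lowers the weight by exactly one while leaving the rest of the tree, and in particular its leaf at position $i$, intact. I would prefer this ``prune a deepest node'' decomposition over the alternative of splitting $p$ at its root into the $\oa$ subtrees hanging below it: the root splitting also yields the count $w - 1$ after summing the contributions of the subtrees, but it forces one to graft several subtrees in succession and to track how each graft shifts the leaf indices of the later ones, whereas pruning a single atom avoids these index shifts entirely.
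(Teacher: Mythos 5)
The paper states this lemma without proof (it is one of the standard preliminary facts about free nonsymmetric operads), so there is no argument of the authors' to compare yours against; the only question is whether your proof is sound, and it is. The base case is right, and every point that needs verification in the inductive step does hold: a deepest internal node $v$ exists because the root of a tree of weight $w \ge 1$ is internal; maximality of depth forces all $\oa$ children of $v$ to be leaves, so the subtree at $v$ is a copy of the basic tree $t$; pruning those $\oa$ leaves turns exactly one internal node into a leaf, so $w(p') = w-1$ with the rest of $p$ untouched; and the paper's definition of partial composition (identify leaf $i$ of $p'$ with the root of $t$) recovers $p = p' \circ_i t$ exactly. Appending this one composition to the $w-2$ compositions supplied by the induction hypothesis gives the required count of $w-1$. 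Your closing remark is also well judged: the alternative decomposition at the root gives the same count, since the weights of the $\oa$ subtrees sum to $w-1$ and each subtree of weight $w_j \ge 1$ costs $w_j - 1$ compositions to build plus one to graft, but carrying it out requires either grafting the subtrees from right to left or tracking how each graft shifts the leaf indices of the later ones (the kind of bookkeeping codified in Lemma \ref{pcrules}); pruning a single atom sidesteps that entirely.
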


\begin{lemma}
\label{pcrules}
Let $p$, $q$, $r$ be $\oa$-ary trees.
Partial composition satisfies \emph{\cite[p.~72]{BD}}:
\[
( p \circ_i q ) \circ_j r
=
\begin{cases}
\; p \circ_i ( q \circ_{j-i+1} r ), &\;\; i \le j \le i{+}\ell(q){-}1;
\\
\; ( p \circ_{j-\ell(q)+1} r ) \circ_i q, &\;\; i{+}\ell(q) \le j \le \ell(p){+}\ell(q){-}1;
\\
\; ( p \circ_j r ) \circ_{i+\ell(r)-1} q, &\;\; 1 \le j \le i{-}1.
\end{cases}
\]
\end{lemma}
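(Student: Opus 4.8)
The plan is to verify each case by tracking how the leaf labels of $p \circ_i q$ are inherited from those of $p$ and $q$, and then checking that grafting $r$ yields the same labelled tree on both sides of the claimed identity. Recall from the definition of partial composition that $p \circ_i q$ is formed by identifying leaf $i$ of $p$ with the root of $q$; since we work with plane trees, the result is a well-defined tree with $\ell(p \circ_i q) = \ell(p) + \ell(q) - 1$ leaves. Reading left to right, these leaves fall into three consecutive blocks: leaves $1, \dots, i-1$ of $p$ (labels unchanged); leaves $1, \dots, \ell(q)$ of $q$ (now occupying positions $i, \dots, i + \ell(q) - 1$); and leaves $i+1, \dots, \ell(p)$ of $p$ (now occupying positions $i + \ell(q), \dots, \ell(p) + \ell(q) - 1$). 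This relabelling dictionary is essentially the only fact needed; the three identities then follow because grafting at distinct leaves commutes and grafting is associative when one graft is nested inside another.

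First I would treat the nested case $i \le j \le i + \ell(q) - 1$. Here leaf $j$ of $p \circ_i q$ lies in the middle block, so it is leaf $j - i + 1$ of $q$. Grafting $r$ there attaches $r$ inside the copy of $q$, producing the subtree $q \circ_{j-i+1} r$, which is then grafted at leaf $i$ of $p$; the right-hand side $p \circ_i ( q \circ_{j-i+1} r )$ describes exactly this configuration, giving the first case.

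Next I would handle the two parallel cases, where leaf $j$ lies in a block inherited from $p$, so that $q$ and $r$ are grafted at two distinct leaves of $p$ and the order of grafting is immaterial up to relabelling. If $i + \ell(q) \le j \le \ell(p) + \ell(q) - 1$, then leaf $j$ is leaf $j - \ell(q) + 1$ of $p$, which sits to the right of leaf $i$; grafting $r$ there leaves the label $i$ unaffected, so one may graft $q$ afterwards at position $i$, giving $( p \circ_{j-\ell(q)+1} r ) \circ_i q$. If $1 \le j \le i - 1$, then leaf $j$ is leaf $j$ of $p$, lying to the left of leaf $i$; after grafting $r$, which contributes $\ell(r)$ leaves, the label of the former leaf $i$ is shifted up by $\ell(r) - 1$, so $q$ must now be grafted at position $i + \ell(r) - 1$, giving $( p \circ_j r ) \circ_{i+\ell(r)-1} q$.

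I expect the only real obstacle to be the index bookkeeping in the two parallel cases: one must confirm that grafting at distinct leaves genuinely commutes as an operation on plane trees, and then account correctly for the upward shift of $\ell(r) - 1$ that grafting \emph{to the left} of leaf $i$ induces on its label, as opposed to the absence of any shift when grafting to the right. Once the relabelling dictionary above is fixed, each of the three cases reduces to a short arithmetic check of leaf indices.
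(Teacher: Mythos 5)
Your proof is correct. Note that the paper does not actually prove this lemma: it is stated with a citation to \cite[p.~72]{BD}, so there is no internal argument to compare against, and your proposal supplies exactly the kind of direct combinatorial verification that the citation points to. Your relabelling dictionary is right: the leaves of $p \circ_i q$ split into three consecutive blocks (leaves $1,\dots,i-1$ of $p$; the $\ell(q)$ leaves of $q$ occupying positions $i,\dots,i+\ell(q)-1$; leaves $i+1,\dots,\ell(p)$ of $p$ shifted to positions $i+\ell(q),\dots,\ell(p)+\ell(q)-1$), and the three cases then reduce to correct index arithmetic: in the nested case leaf $j$ of $p\circ_i q$ is leaf $j-i+1$ of $q$; in the right-parallel case it is leaf $j-\ell(q)+1$ of $p$ and leaf $i$ keeps its label because $r$ is grafted strictly to its right; in the left-parallel case leaf $i$ of $p$ is pushed to position $i+\ell(r)-1$ because $r$ contributes $\ell(r)$ leaves in place of one. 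The single fact you flag as needing confirmation---that grafting at two distinct leaves of a plane tree gives the same tree in either order---is immediate from the definition of partial composition as substitution of a tree for a leaf: either order produces the tree in which both designated leaves of $p$ have been replaced, and only the labels by which those leaves are addressed change, which is precisely what your dictionary tracks. So the proposal is a complete and sound proof of the lemma, filling in a step the paper delegates to the literature.
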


\begin{lemma}
The set $\trees$ with partial compositions is isomorphic to
the free nonsymmetric (set) operad with one $\oa$-ary operation $t$.
\end{lemma}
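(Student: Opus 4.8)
The plan is to establish the claim by verifying directly that $(\trees, \{\circ_i\})$ satisfies the universal property characterizing the free nonsymmetric operad on a single $\oa$-ary generator; since that property determines its object uniquely up to isomorphism, this identifies $\trees$ with the free operad. First I would confirm that $\trees$ is itself a nonsymmetric operad. The partial compositions $\circ_i$ are the grafting maps already defined, the identity is the unique weight-$0$ tree in $\trees(1)$ (a single leaf), the unit axioms are immediate from the grafting description, and the sequential associativity axioms are precisely the three cases recorded in Lemma~\ref{pcrules}. With $t \in \trees(\oa)$ the basic tree, it then remains to show that for every nonsymmetric operad $\mathcal{P}$ and every element $\phi \in \mathcal{P}(\oa)$ there is a unique morphism of operads $F \colon \trees \to \mathcal{P}$ with $F(t) = \phi$.

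For uniqueness, I would invoke the earlier lemma that every $\oa$-ary tree of weight $w$ arises from $t$ by a sequence of $w{-}1$ partial compositions. Since any operad morphism sends the identity to the identity and commutes with every $\circ_i$, the value $F(p)$ is forced by $F(t) = \phi$ for all $p$; hence at most one such $F$ can exist. For existence I would define $F$ by induction on weight: send the weight-$0$ tree to $\mathrm{id}_{\mathcal{P}}$, set $F(t) = \phi$, and for a tree $p$ whose root subtrees are $p_1, \dots, p_\oa$ put $F(p) = \gamma(\phi; F(p_1), \dots, F(p_\oa))$, where $\gamma$ is the total composition of $\mathcal{P}$ assembled from its partial compositions. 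Because the root-subtree decomposition of a tree is canonical, this recursion defines $F$ as an honest function with no ambiguity; the real content is to check that $F$ is a morphism, that is, $F(p \circ_i q) = F(p) \circ_i F(q)$ for all $p$, $q$ and all admissible $i$.

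The main obstacle is exactly this compatibility step. Although the recursive definition is unambiguous, there is no a priori reason for it to respect an arbitrary partial composition, since $p \circ_i q$ must be re-decomposed from its own root before $F$ applies. Reconciling the two sides amounts to transporting the grafting of $q$ at leaf $i$ down into the appropriate root subtree of $p$, and the bookkeeping of leaf indices under this move is governed precisely by the three cases of Lemma~\ref{pcrules}; the matching identities on the $\mathcal{P}$ side are its sequential associativity axioms, which hold by hypothesis. I would carry this out by induction on the weight of $p$, treating separately whether the grafting point $i$ lands in the first, an intermediate, or the last root subtree. Once compatibility with every $\circ_i$ is established, $F$ is the required operad morphism with $F(t) = \phi$, the universal property holds, and $\trees$ is thereby identified with the free nonsymmetric operad on one $\oa$-ary operation.
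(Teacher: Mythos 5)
Your proposal is correct, but there is nothing in the paper to compare it against: the paper states this lemma without proof, treating it as a standard fact about tree operads (the surrounding preliminaries are drawn from \cite[\S 3.4]{BD} and \cite[\S\S 2.4, 3.1]{DV}). What you have written is essentially the textbook argument: verify that $\trees$ with grafting is a nonsymmetric operad (unit axioms immediate, associativity axioms being exactly the three cases of Lemma~\ref{pcrules}), then establish the universal property, with uniqueness forced by the generation lemma (every tree arises from $t$ by iterated partial compositions) and existence given by recursion on the canonical root decomposition. You also correctly identify where the real work lies: the recursively defined map must be shown to commute with arbitrary partial compositions, which requires transporting the graft into the appropriate root subtree and matching the resulting index bookkeeping against the associativity axioms of the target operad. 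Two small points you leave implicit but should note if writing this out in full: the total composition $\gamma(\phi; x_1, \dots, x_\oa)$ must be assembled from partial compositions in a fixed order (say from the right) so that the recursion is unambiguous, with independence of the chosen order following from the parallel-associativity case of Lemma~\ref{pcrules}; and the base of the recursion is consistent because $\gamma(\phi; \mathrm{id}, \dots, \mathrm{id}) = \phi$ by the unit axioms. Neither is a gap in the argument, only in the exposition.
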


\begin{definition}
If $n \equiv 1$ (mod $\oa{-}1$) then 
$\freeternary(n)$ is the vector space with basis $\trees(n)$,
and $\freeternary$ is the direct sum of $\freeternary(n)$ for $n \ge 1$.
A \emph{tree polynomial} of arity $n$ is an element of $\freeternary(n)$.
Partial composition in $\trees$ extends bilinearly to $\freeternary$.
\end{definition}

\begin{lemma}
The vector space $\freeternary$ with partial compositions is isomorphic to
the free nonsymmetric (vector) operad with one $\oa$-ary operation $t$.
\end{lemma}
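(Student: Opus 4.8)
The plan is to deduce the vector-operad statement from the already-established set-operad version (the preceding Lemma) by the standard linearization argument: forming free vector spaces on each component commutes with forming free operads. Concretely, write $k$ for the base field and let $k[-]$ denote the free-vector-space functor, so that $\freeternary(n) = k[\trees(n)]$ by definition, and the bilinear extension of partial composition is precisely $k[-]$ applied to the partial compositions of $\trees$. I would then verify the universal property of the free nonsymmetric vector operad on one $\oa$-ary generator directly.

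The universal property to establish is: for every nonsymmetric vector operad $\mathcal{P}$ over $k$ and every element $f \in \mathcal{P}(\oa)$, there is a unique morphism of vector operads $\phi \colon \freeternary \to \mathcal{P}$ with $\phi(t) = f$. First I would construct, at the level of the basis $\trees$, an evaluation map $\psi \colon \trees \to \mathcal{P}$: by the Lemma that every $\oa$-ary tree of weight $w$ is obtained from $t$ by $w-1$ partial compositions, each tree $p$ determines an element $\psi(p) \in \mathcal{P}$ by replacing the basic tree $t$ at each internal node by $f$ and each partial composition by the corresponding composition in $\mathcal{P}$. Then I would extend $\psi$ by linearity to $\phi \colon \freeternary \to \mathcal{P}$. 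That $\phi$ is a morphism follows because partial composition is bilinear on both sides and $\phi$ is linear and agrees on the basis with the composition-respecting map $\psi$; uniqueness follows because any morphism sending $t \mapsto f$ must send each tree to its evaluation and hence agrees with $\phi$ on the spanning set $\trees$.

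The only genuinely delicate point is the well-definedness of $\psi$: the same tree $p$ can be assembled by many different sequences of partial compositions, and I must check that $\psi(p)$ does not depend on the chosen sequence. This is exactly the content of the preceding Lemma asserting that $\trees$ is the \emph{free} nonsymmetric set operad on $t$ --- the defining relations among iterated partial compositions are those of Lemma~\ref{pcrules}, and these hold in any operad, in particular in the underlying set operad of $\mathcal{P}$. So I would simply invoke set-operad freeness to obtain the unique composition-respecting set map $\psi \colon \trees \to \mathcal{P}$ with $\psi(t) = f$, sidestepping any recomputation.

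Alternatively, and perhaps most cleanly, I would phrase the whole argument functorially: $k[-] \colon \mathrm{Set} \to \mathrm{Vect}_k$ is strong monoidal and is a left adjoint, hence preserves colimits and carries free objects to free objects; applying it componentwise to the free set operad $\trees$ therefore yields the free vector operad, and this image is exactly $\freeternary$ with its bilinearly extended compositions. I expect the main obstacle to be purely expository --- making precise the monoidal structure under which ``free vector operad'' is defined so that the linearization functor manifestly commutes with the free-operad construction --- rather than any substantive computation.
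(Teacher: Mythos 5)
Your argument is correct, but there is nothing in the paper to compare it against: the authors state this lemma without proof, as standard background alongside the material imported from \cite[\S3.4]{BD} and \cite[\S\S2.4,~3.1]{DV}. What you give is the standard linearization proof, and it is sound. In particular, you correctly isolate the only delicate point --- that the evaluation map $\psi \colon \trees \to \mathcal{P}$ is independent of the sequence of partial compositions used to assemble a given tree --- and you dispose of it legitimately by applying set-operad freeness to the underlying set operad of $\mathcal{P}$, which exists because the bilinear partial compositions of $\mathcal{P}$ are in particular set maps. Your functorial variant is also correct: componentwise linearization $k[-]$ is strong monoidal and is left adjoint to the underlying-set functor at the level of operads (not merely of collections), so it carries the free set operad on one $\oa$-ary generator to the free vector operad on one $\oa$-ary generator, which is exactly $\freeternary$ with its bilinearly extended compositions. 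One caveat worth recording: this argument is tied to the sign-free composition rules of Lemma~\ref{pcrules}, i.e.\ to an ungraded (or even-degree) generator. For the odd-degree operation treated at the end of the paper, the partial compositions acquire Koszul signs (Lemma~\ref{signs}); the free graded operad on an odd generator still has the tree monomials $\trees(n)$ as a basis, but it is no longer the componentwise linearization of the set operad $\trees$, so your proof does not transfer verbatim to that setting --- though this does not affect the statement at hand, which is the sign-free one.
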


\begin{definition}
A \textit{relation} of arity $n$ is an element of $\freeternary(n) \setminus 0$.
For relations $f_1, \dots, f_k$ let
$\mathcal{I} = \langle f_1, \dots, f_k \rangle = \bigcap \mathcal{S}$
over homogeneous subspaces $\mathcal{S} \subseteq \freeternary$ with
$f_1, \dots, f_k \in \mathcal{S}$,
and
$f \in \mathcal{S}(m), g \in \freeternary(n) \implies f \circ_i g, g \circ_j f \in \mathcal{S}$ 
($i \in [m]$, $j \in [n]$). 
\end{definition}

The following results come from \cite[\S3.4]{BD} and \cite[\S\S2.4,~3.1]{DV} with minor changes.

\begin{definition}
The \textit{path sequence} of $p \in \trees(n)$ is $\mathrm{path}(p) = ( a_1, \dots, a_n )$
where $a_i$ is the length of the path from the root to the leaf $i$.
\end{definition}

\begin{lemma}
If $p, q \in \trees$ then $p = q$ if and only if $\mathrm{path}(p) = \mathrm{path}(q)$.
\end{lemma}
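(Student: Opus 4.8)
The forward implication is immediate, so the substance of the lemma is the converse: that the path sequence determines the tree. The plan is to prove this by induction on the weight $w(p)$, using as the crucial tool a multiplicative invariant that renders path sequences \emph{self-delimiting}, so that a concatenated path sequence can be split into its constituent blocks in only one way.

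First I would record the recursive structure of path sequences. If $w(p) \ge 1$ then the root of $p$ is an internal node whose $\oa$ children carry subtrees $P_1, \dots, P_{\oa}$, and reading the leaves from left to right shows
\[
\mathrm{path}(p) = \bigl( \mathrm{path}(P_1) {+} 1 \bigr) \cdots \bigl( \mathrm{path}(P_{\oa}) {+} 1 \bigr),
\]
where juxtaposition denotes concatenation and ${+}1$ adds one to every entry (each leaf acquires one extra edge to the new root). Thus reconstructing $p$ from $\mathrm{path}(p)$ amounts to splitting the sequence into the $\oa$ consecutive blocks coming from the subtrees, and the entire difficulty lies in showing that this split is forced.

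To force the split I would introduce, for a sequence $(a_1, \dots, a_n)$ of leaf depths, the \emph{Kraft sum} $\sum_{i=1}^{n} \oa^{-a_i}$ (a notion distinct from the weight $w$), and prove the Kraft-type identity that every path sequence has Kraft sum exactly $1$. This follows by induction on $w(p)$: the single leaf has Kraft sum $\oa^{0} = 1$, and replacing a leaf at depth $d$ by an internal node with $\oa$ leaf-children replaces a term $\oa^{-d}$ by $\oa \cdot \oa^{-(d+1)} = \oa^{-d}$, leaving the total unchanged (invoking the earlier lemma that every tree is built from $t$ by partial compositions). An immediate consequence is that no proper nonempty prefix of a path sequence is itself a path sequence: such a prefix would already contribute Kraft sum $1$, forcing the full sequence to have Kraft sum strictly greater than $1$, a contradiction. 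This prefix-free property is exactly what pins down the block boundaries.

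With these in hand the induction closes quickly. Suppose $\mathrm{path}(p) = \mathrm{path}(q)$ with $n = \ell(p) = \ell(q)$. If $n = 1$ both are the single leaf; if $n > 1$ both roots are internal, and subtracting $1$ from every entry exhibits the common sequence as a concatenation of the $\mathrm{path}(P_k)$ and also of the $\mathrm{path}(Q_k)$. Comparing the first blocks, one of $\mathrm{path}(P_1)$, $\mathrm{path}(Q_1)$ is a prefix of the other; since both are genuine path sequences, the prefix-free property forces them to coincide, whence $\ell(P_1) = \ell(Q_1)$ and $P_1 = Q_1$ by the inductive hypothesis. Stripping this block and repeating gives $P_k = Q_k$ for every $k$, and therefore $p = q$. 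I expect the only real obstacle to be the justification of this unique splitting, which is precisely what the Kraft-sum identity and the resulting prefix-free property are designed to overcome; everything else is routine bookkeeping on the recursive decomposition.
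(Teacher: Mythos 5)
Your proof is correct, but note that the paper itself offers no argument for this lemma at all: it is one of the preliminary facts quoted from \cite[\S3.4]{BD} and \cite[\S\S2.4,~3.1]{DV}, so you have supplied a self-contained proof of something the authors simply cite. Your route also differs from the standard one in those references. The usual argument reconstructs the tree directly and greedily from the path sequence (recursively splitting the sequence into the $\oa$ blocks belonging to the root's subtrees, with the block boundaries located by scanning depths), whereas you obtain unique parsing from a numerical invariant: the Kraft identity $\sum_i \oa^{-a_i} = 1$ for every path sequence, whose immediate corollary is that path sequences form a prefix-free code, which forces the block decomposition to be unique. Both arguments are inductions on weight with the same recursive decomposition $\mathrm{path}(p) = (\mathrm{path}(P_1){+}1)\cdots(\mathrm{path}(P_\oa){+}1)$; the difference is purely in how unique readability is certified. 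The greedy reconstruction is more constructive (it doubles as an algorithm converting path sequences back into trees, which is how \cite{BD} uses it), while your Kraft-sum argument is arguably cleaner as pure mathematics and generalizes verbatim to trees whose internal nodes have varying arities, where block-scanning arguments require more care. All the supporting steps check out: the invariance of the Kraft sum under grafting $t$ at a leaf, the prefix-free consequence, and the induction (both subtrees $P_1$, $Q_1$ are prefixes of the same sequence, so prefix-freeness forces $\mathrm{path}(P_1) = \mathrm{path}(Q_1)$, and the inductive hypothesis applies since $w(P_1) < w(p)$).
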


\begin{definition}
For $p, q \in \trees(n)$ we write $p \prec q$ and say $p$ \textit{precedes} $q$ in \emph{path-lex order}
if and only if $\mathrm{path}(p) \prec \mathrm{path}(q)$ in lex order on $n$-tuples of positive integers.
If $f \in \freeternary(n)$ then its \emph{leading monomial} $\ell m(f) \in \trees(n) $ is the greatest monomial
in path-lex order, and its \emph{leading coefficient} $\ell c(f)$ is the coefficient of $\ell m(f)$.
\end{definition}

\begin{definition}
\label{Mpqf}
If $p, q \in \trees$ then $q$ is \emph{divisible} by $p$ (written $p \mid q$)
if $p$ is a subtree of $q$:
that is,
$q = \cdots p \cdots$ where
the dots denote sequences of partial compositions with parentheses.
If $p \in \trees(m)$, $q \in \trees(n)$, $p \mid q$, and $f \in \freeternary(m)$
then we may replace $p$ by $f$ in $q$ and use linearity and the same partial compositions to obtain
\[
M(q,p,f) = \cdots f \cdots \in \freeternary(n).
\]
\end{definition}

\begin{definition}
If $f$, $g \in \freeternary$ and $\ell m(g) \mid \ell m(f)$ then
the \emph{reduction of $f$ by $g$} (which eliminates the leading term of $f$)
is
\[
R(f,g) = f - \frac{\ell c(f)}{\ell c(g)} M\big( \ell m(f), \ell m(g), g \big).
\]
This extends to reduction of $f$ by $g_1, \dots, g_k$; see \cite[Algorithm 3.4.2.16]{BD}.
\end{definition}

\begin{definition}
If $p, q, r \in \trees$ then $p$ is a \emph{small common multiple} (SCM) of $q$, $r$ if
\[
q \mid p, \;\;\;
r \mid p, \;\;\;
\text{every node of $p$ is a node of $q$ or $r$ (or both)}, \;\;\;
\ell(p) < \ell(q) + \ell(r).
\]
\end{definition}

\begin{definition}
If $f$, $g$, $h$ are monic tree polynomials and $\ell m(f)$ is an SCM of $\ell m(g)$, $\ell m(h)$
then the resulting \emph{S-polynomial} is
\[
S( f, g, h ) =
M\big( \ell m(f), \ell m(g), g \big)
-
M\big( \ell m(f), \ell m(h), h \big).
\]
\end{definition}

\begin{definition}
Let $G$ be a finite set of relations and let $I = \langle G \rangle$.
If for all $f \in I$ there exists $g \in G$ such that $\ell m(g) \mid \ell m(f)$
then we call $G$ a \emph{Gr\"obner basis} for $I$.
We say $G$ is \emph{reduced} if $\ell m(g)$ is not divisible by $\ell m(h)$ for all $g$, $h \in G$.
\end{definition}

\begin{lemma}
Every operad ideal has a unique reduced Gr\"obner basis.
\end{lemma}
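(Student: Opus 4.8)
The plan is to split the statement into an existence claim and a uniqueness claim, and to build both on two facts already in hand: path-lex order restricts to a \emph{well-order} on each finite set $\trees(n)$, so every chain of reductions terminates; and ideal membership is governed by the leading-term set $\ell m(I) = \{\, \ell m(f) : f \in I \setminus 0 \,\}$. Throughout I would argue arity by arity, since the ideal is homogeneous and decomposes as $I = \bigoplus_n \big( I \cap \freeternary(n) \big)$ with each $\freeternary(n)$ finite-dimensional on the basis $\trees(n)$. The key closure property to exploit is that if $f \in I \setminus 0$ and $G$ is a Gr\"obner basis, some $g \in G$ has $\ell m(g) \mid \ell m(f)$, so that $f$ can be reduced by $G$ and, by well-ordering, driven to $0$.

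For existence, granting that $I$ admits some Gr\"obner basis $G_0$ — produced, e.g., by running an S-polynomial (Buchberger-type) completion on a generating set, whose output is a Gr\"obner basis exactly when every S-polynomial $S(f,g,h)$ reduces to $0$ — I would pass to a reduced basis in three steps. First, \emph{minimize}: discard any $g \in G_0$ whose leading monomial $\ell m(g)$ is divisible, as a subtree, by $\ell m(h)$ for some other $h$; the survivors still form a Gr\"obner basis because the discarded leading monomials were redundant generators of $\ell m(I)$, and iterating leaves a set in which no leading monomial divides another. Second, \emph{normalize} each element so that $\ell c(g) = 1$. Third, \emph{autoreduce}: replace each $g$ by its normal form modulo the remaining elements, applying the reduction operator $R$ defined above not only to the leading term but to every monomial of $g$ divisible by some $\ell m(h)$, $h \neq g$. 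Here I must verify that autoreduction never alters any leading monomial — reducing an interior term only introduces monomials strictly below it in path-lex order, hence below $\ell m(g)$ — that it terminates by well-ordering, and that it preserves the (unchanged) leading-monomial set. The output is a reduced Gr\"obner basis.

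For uniqueness, suppose $G$ and $G'$ are both reduced Gr\"obner bases of $I$. I would first show they share the same leading monomials. Given $g \in G$, since $G'$ is a Gr\"obner basis there is $g' \in G'$ with $\ell m(g') \mid \ell m(g)$; since $G$ is a Gr\"obner basis there is $h \in G$ with $\ell m(h) \mid \ell m(g')$. Chaining gives $\ell m(h) \mid \ell m(g)$ inside $G$, so reducedness of $G$ forces $h = g$; then $\ell m(g) \mid \ell m(g') \mid \ell m(g)$, and antisymmetry of subtree-divisibility yields $\ell m(g) = \ell m(g')$. Thus the leading monomials coincide and pairing by equal leading monomial is a bijection $g \leftrightarrow g'$. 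Now fix such a pair and consider $g - g' \in I$. If $g - g' \neq 0$, its leading monomial lies in $\ell m(I)$ and is divisible by some $\ell m(k)$, $k \in G$; but the leading terms of $g$ and $g'$ cancel (both monic with the same $\ell m$), so every monomial of $g - g'$ is an interior monomial of $g$ or of $g'$, none divisible by any leading monomial of $G$ or $G'$ by reducedness — a contradiction. Hence $g = g'$ throughout, so $G = G'$.

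The step I expect to be the main obstacle is securing existence of a \emph{finite} Gr\"obner basis in the first place, since the definition above builds finiteness into the notion while operads need not be Noetherian. Within a single arity all sets are finite, so the real difficulty is uniform control across \emph{all} arities: one needs $\ell m(I)$ to be generated under partial composition by finitely many trees (a Dickson-type property), equivalently that the completion procedure terminates. For the concrete ideals $\langle \alpha \rangle$ treated later this will be settled directly by exhibiting the finite Gr\"obner basis and checking that every S-polynomial reduces to $0$; the remaining bookkeeping — that subtree-divisibility interacts correctly with the substitution $M(q,p,f)$, and that reductions of interior terms remain strictly below the leading monomial — is routine given Lemma~\ref{pcrules}.
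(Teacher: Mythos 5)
The paper never actually proves this lemma---it is imported wholesale from \cite{BD} and \cite{DV}---so the comparison is with the standard argument in those sources, which is exactly the one you outline (minimize, normalize, autoreduce; then uniqueness by cancelling leading terms). Your uniqueness half is correct, but only under the \emph{strong} reading of ``reduced'': every element monic, and \emph{no} monomial of $g$, leading or interior, divisible by $\ell m(h)$ for $h \neq g$. That is what your autoreduction step manufactures, and it is precisely what you invoke when you say the interior monomials of $g$ and $g'$ escape all leading monomials ``by reducedness''. The paper's stated definition constrains only leading monomials (what is usually called a \emph{minimal} Gr\"obner basis), and under that literal definition uniqueness is false: in the paper's own Theorem \ref{eventheorem}, replacing $\theta$ by $\theta + \nu$ leaves the leading monomials unchanged (since $\nu \prec \theta$ in path-lex and both are monomials of arity $9$), leaves the generated ideal unchanged (as $\theta = (\theta + \nu) - \nu$), and leaves the pairwise-indivisibility condition intact, yet produces a different ``reduced'' basis of $\langle \alpha \rangle$. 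So your silent upgrade of the definition is the right move, but it should be made explicit, since the lemma is not true for the definition as printed.

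The genuine gap is in existence. Your construction is conditional on some Gr\"obner basis $G_0$ existing, and your only proposed source is Buchberger-type completion, which---as you yourself concede---need not terminate; deferring to the concrete ideal $\langle \alpha \rangle$ proves the lemma for that one ideal, not the general statement, and with ``finite'' retained in the definition the general statement is in fact false, since operad ideals need not admit finite Gr\"obner bases. The missing idea is that no completion is needed once infinite bases are allowed (finiteness in each arity is automatic): $G_0 = I \setminus \{0\}$ is tautologically a Gr\"obner basis of $I$, because for $f \in I$ the element $g = f$ of $G_0$ satisfies $\ell m(g) \mid \ell m(f)$. From there your three steps go through unconditionally. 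Minimization works because tree-divisibility is well-founded (a proper divisor has strictly smaller weight), so every element of $\{\, \ell m(f) : f \in I \setminus 0 \,\}$ is divisible by a divisibility-minimal one, and one keeps a single representative per minimal leading monomial. Autoreduction of a fixed $g$ of arity $n$ is then a finite computation: only basis elements whose leading monomial has arity at most $n$ can divide a monomial of $g$, after minimization there are at most $\left| \bigcup_{k \le n} \trees(k) \right|$ such elements, and each reduction step replaces a monomial of $g$ by strictly smaller ones inside the finite set $\trees(n)$, so the process stops. Since, as you note, leading monomials never move during autoreduction, one pass over the elements suffices, and your uniqueness argument then closes the proof.
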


\begin{theorem}
If $I = \langle G \rangle$ then
$G$ is a Gr\"obner basis for $I$ if and only if for every SCM $f$ of elements $g$, $h \in G$
the reduction of $S(f,g,h)$ by $G$ is 0.
\end{theorem}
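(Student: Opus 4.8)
The plan is to follow the classical template for Buchberger's criterion, now adapted to the nonsymmetric operad $\freeternary$ as in \cite[\S3.1]{DV} and \cite[\S3.4]{BD}; we may assume throughout that the elements of $G$ are monic (rescale if necessary). The forward direction is the routine one. Suppose $G$ is a Gr\"obner basis and let $f$ be an SCM of $g, h \in G$. Both $M(\ell m(f), \ell m(g), g)$ and $M(\ell m(f), \ell m(h), h)$ lie in $I$ by the definition of the ideal, so $S(f,g,h) \in I$. Since reduction by $G$ never leaves $I$ and a nonzero remainder would be an element of $I$ whose leading monomial is divisible by no $\ell m(g)$ — contradicting that $G$ is a Gr\"obner basis — the reduction of $S(f,g,h)$ must terminate at $0$.

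The substance is the converse, so assume every such S-polynomial reduces to $0$. Given $0 \neq f \in I$, I must produce $g \in G$ with $\ell m(g) \mid \ell m(f)$. Because $I = \langle G \rangle$, every element of $I$ is a combination of the basic tree polynomials obtained by surrounding a single $g_j \in G$ with partial compositions, and each such polynomial equals $M(p_j, \ell m(g_j), g_j)$ for a unique tree $p_j \in \trees$ with $\ell m(g_j) \mid p_j$ (insert the leading monomial of $g_j$ into the composition skeleton). Thus I can write
\[
f = \sum_{j} c_j\, M\big(p_j, \ell m(g_j), g_j\big),
\qquad
\ell m\big( M(p_j, \ell m(g_j), g_j) \big) = p_j ,
\]
using that path-lex order is compatible with partial composition. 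Among all representations of $f$ of this shape, I would fix one that first minimizes $m := \max_j p_j$ in path-lex order, and then minimizes the number $N$ of indices $j$ with $p_j = m$. If $\ell m(f) = m$, then no top cancellation occurs, so $\ell m(f) = p_{j_0}$ for some $j_0$ and $\ell m(g_{j_0}) \mid \ell m(f)$, as required.

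It remains to rule out $\ell m(f) \prec m$. In that case the coefficient of $m$ in $f$ vanishes, forcing $N \geq 2$; choose two terms, say $j = 1, 2$, with $p_1 = p_2 = m$, so that both $\ell m(g_1)$ and $\ell m(g_2)$ occur as subtrees of $m$. There are two combinatorial possibilities. If the two subtrees are disjoint (share no internal node), then by the commutation cases of Lemma \ref{pcrules} the two substitutions commute and the combination of these two top terms telescopes into strictly smaller terms — the operadic analogue of the coprime case. If they overlap, then the smallest subtree $f'$ of $m$ containing both is, after rescaling to be monic, an SCM of $\ell m(g_1)$ and $\ell m(g_2)$ with $\ell(f') < \ell(\ell m(g_1)) + \ell(\ell m(g_2))$; by hypothesis $S(f', g_1, g_2)$ reduces to $0$, and transporting that reduction back into $m$ through the common outer partial compositions rewrites $c_1 M(\cdots) + c_2 M(\cdots)$ as a combination of terms with leading monomial $\prec m$. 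In either case I obtain a representation of $f$ with either smaller $m$ or the same $m$ and smaller $N$, contradicting minimality.

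I expect the main obstacle to be precisely the overlap analysis: verifying that the minimal common subtree of $m$ containing $\ell m(g_1)$ and $\ell m(g_2)$ really is an SCM in the exact sense of the definition, and that the S-polynomial reduction — computed a priori inside $f'$ — can be lifted back into the larger tree $m$ while keeping all leading monomials under control. This is where the three cases of Lemma \ref{pcrules} must be used to commute and recombine the outer compositions, and where the arity bound $\ell(p) < \ell(q) + \ell(r)$ does its work, certifying that the overlapping (as opposed to disjoint) common multiples are exactly the ones producing nontrivial obstructions.
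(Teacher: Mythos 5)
Note first that the paper itself contains no proof of this theorem: it sits among the preliminaries that are explicitly imported from \cite[\S3.4]{BD} and \cite[\S\S2.4,~3.1]{DV}, so the only meaningful comparison is with the standard proof in those sources. Your proposal is essentially that standard Buchberger-criterion argument --- the forward direction via the observation that a nonzero remainder would contradict the Gr\"obner property, and the converse via a representation $f = \sum_j c_j\, M\big(p_j, \ell m(g_j), g_j\big)$ chosen minimal in $(\max_j p_j,\, N)$, with top cancellation killed either by the disjoint-occurrence telescoping or by invoking the hypothesis on the SCM arising from an overlap and lifting the resulting reduction (each step of which subtracts a term with tree $\preceq \ell m(S) \prec$ the SCM) back through the outer context --- so it matches the cited proof in approach and is sound in structure.
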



\section{Gr\"obner bases and dimension formulas}
\label{evensection}

In the rest of this paper we consider a ternary operation ($\oa = 3$).
We usually indicate the leading monomial of a tree polynomial by a bullet at the root, and
write the terms of a tree polynomial from left to right in reverse path-lex order.
The partially associative relation $\alpha$ corresponds to this rewrite rule:
\begin{equation}
\label{frr}
t \circ_1 t
\; =
\adjustbox{valign=m}{
\begin{xy}
(  0,  0 )*{} = "root";
( -3, -4 )*{} = "l";
(  0, -4 )*{} = "m";
(  3, -4 )*{} = "r";
{ \ar@{-} "root"; "l" };
{ \ar@{-} "root"; "m" };
{ \ar@{-} "root"; "r" };
( -6, -8 )*{} = "ll";
( -3, -8 )*{} = "lm";
(  0, -8 )*{} = "lr";
{ \ar@{-} "l"; "ll" };
{ \ar@{-} "l"; "lm" };
{ \ar@{-} "l"; "lr" };
\end{xy}
}
\;
\xrightarrow{\qquad}
\;
\bigminus
\adjustbox{valign=m}{
\begin{xy}
(  0,  0 )*{} = "root";
( -3, -4 )*{} = "l";
(  0, -4 )*{} = "m";
(  3, -4 )*{} = "r";
{ \ar@{-} "root"; "l" };
{ \ar@{-} "root"; "m" };
{ \ar@{-} "root"; "r" };
( -3, -8 )*{} = "ml";
(  0, -8 )*{} = "mm";
(  3, -8 )*{} = "mr";
{ \ar@{-} "m"; "ml" };
{ \ar@{-} "m"; "mm" };
{ \ar@{-} "m"; "mr" };
\end{xy}
}
\bigminus
\adjustbox{valign=m}{
\begin{xy}
(  0,  0 )*{} = "root";
( -3, -4 )*{} = "l";
(  0, -4 )*{} = "m";
(  3, -4 )*{} = "r";
{ \ar@{-} "root"; "l" };
{ \ar@{-} "root"; "m" };
{ \ar@{-} "root"; "r" };
(  0, -8 )*{} = "rl";
(  3, -8 )*{} = "rm";
(  6, -8 )*{} = "rr";
{ \ar@{-} "r"; "rl" };
{ \ar@{-} "r"; "rm" };
{ \ar@{-} "r"; "rr" };
\end{xy}
}
=
{} - t \circ_2 t - t \circ_3 t
\end{equation}

\begin{theorem}
\label{eventheorem}
For the path-lex monomial order, the following tree polynomials
form the reduced Gr\"obner basis for $\langle \alpha \rangle$
with an operation of even degree:
\begin{alignat*}{2}
&
\adjustbox{valign=t}{$
\alpha
=
\adjustbox{valign=m}{
\begin{xy}
(  0,  0 )*{\bullet} = "root";
( -3, -4 )*{} = "l";
(  0, -4 )*{} = "m";
(  3, -4 )*{} = "r";
{ \ar@{-} "root"; "l" };
{ \ar@{-} "root"; "m" };
{ \ar@{-} "root"; "r" };
( -6, -8 )*{} = "ll";
( -3, -8 )*{} = "lm";
(  0, -8 )*{} = "lr";
{ \ar@{-} "l"; "ll" };
{ \ar@{-} "l"; "lm" };
{ \ar@{-} "l"; "lr" };
\end{xy}
}
\bigplus
\adjustbox{valign=m}{
\begin{xy}
(  0,  0 )*{} = "root";
( -3, -4 )*{} = "l";
(  0, -4 )*{} = "m";
(  3, -4 )*{} = "r";
{ \ar@{-} "root"; "l" };
{ \ar@{-} "root"; "m" };
{ \ar@{-} "root"; "r" };
( -3, -8 )*{} = "ml";
(  0, -8 )*{} = "mm";
(  3, -8 )*{} = "mr";
{ \ar@{-} "m"; "ml" };
{ \ar@{-} "m"; "mm" };
{ \ar@{-} "m"; "mr" };
\end{xy}
}
\bigplus
\adjustbox{valign=m}{
\begin{xy}
(  0,  0 )*{} = "root";
( -3, -4 )*{} = "l";
(  0, -4 )*{} = "m";
(  3, -4 )*{} = "r";
{ \ar@{-} "root"; "l" };
{ \ar@{-} "root"; "m" };
{ \ar@{-} "root"; "r" };
(  0, -8 )*{} = "rl";
(  3, -8 )*{} = "rm";
(  6, -8 )*{} = "rr";
{ \ar@{-} "r"; "rl" };
{ \ar@{-} "r"; "rm" };
{ \ar@{-} "r"; "rr" };
\end{xy}
}
$}
&\qquad
&
\adjustbox{valign=t}{$
\beta
=
\adjustbox{valign=m}{
\begin{xy}
(  0,  0 )*{\bullet} = "root";
( -3, -4 )*{} = "l";
(  0, -4 )*{} = "m";
(  3, -4 )*{} = "r";
{ \ar@{-} "root"; "l" };
{ \ar@{-} "root"; "m" };
{ \ar@{-} "root"; "r" };
( -3, -8 )*{} = "ml";
(  0, -8 )*{} = "mm";
(  3, -8 )*{} = "mr";
{ \ar@{-} "m"; "ml" };
{ \ar@{-} "m"; "mm" };
{ \ar@{-} "m"; "mr" };
(  0, -12 )*{} = "mrl";
(  3, -12 )*{} = "mrm";
(  6, -12 )*{} = "mrr";
{ \ar@{-} "mr"; "mrl" };
{ \ar@{-} "mr"; "mrm" };
{ \ar@{-} "mr"; "mrr" };
\end{xy}
}
\bigplus
\adjustbox{valign=m}{
\begin{xy}
(  0,  0 )*{} = "root";
( -3, -4 )*{} = "l";
(  0, -4 )*{} = "m";
(  3, -4 )*{} = "r";
{ \ar@{-} "root"; "l" };
{ \ar@{-} "root"; "m" };
{ \ar@{-} "root"; "r" };
(  0, -8 )*{} = "rl";
(  3, -8 )*{} = "rm";
(  6, -8 )*{} = "rr";
{ \ar@{-} "r"; "rl" };
{ \ar@{-} "r"; "rm" };
{ \ar@{-} "r"; "rr" };
(  0, -12 )*{} = "rml";
(  3, -12 )*{} = "rmm";
(  6, -12 )*{} = "rmr";
{ \ar@{-} "rm"; "rml" };
{ \ar@{-} "rm"; "rmm" };
{ \ar@{-} "rm"; "rmr" };
\end{xy}
}
\bigplus
\adjustbox{valign=m}{
\begin{xy}
(  0,  0 )*{} = "root";
( -3, -4 )*{} = "l";
(  0, -4 )*{} = "m";
(  3, -4 )*{} = "r";
{ \ar@{-} "root"; "l" };
{ \ar@{-} "root"; "m" };
{ \ar@{-} "root"; "r" };
(  0, -8 )*{} = "rl";
(  3, -8 )*{} = "rm";
(  6, -8 )*{} = "rr";
{ \ar@{-} "r"; "rl" };
{ \ar@{-} "r"; "rm" };
{ \ar@{-} "r"; "rr" };
(  3, -12 )*{} = "rrl";
(  6, -12 )*{} = "rrm";
(  9, -12 )*{} = "rrr";
{ \ar@{-} "rr"; "rrl" };
{ \ar@{-} "rr"; "rrm" };
{ \ar@{-} "rr"; "rrr" };
\end{xy}
}
$}
\\
&
\adjustbox{valign=t}{$
\eta
=
\adjustbox{valign=m}{
\begin{xy}
(  3,   0 )*{\bullet} = "root";
(  0,  -4 )*{} = "l";
(  3,  -4 )*{} = "m";
( 12,  -4 )*{} = "r";
{ \ar@{-} "root"; "l" };
{ \ar@{-} "root"; "m" };
{ \ar@{-} "root"; "r" };
( 0, -8 )*{} = "ll";
(  3, -8 )*{} = "lm";
(  6, -8 )*{} = "lr";
{ \ar@{-} "m"; "ll" };
{ \ar@{-} "m"; "lm" };
{ \ar@{-} "m"; "lr" };
( 9, -8 )*{} = "ml";
(  12, -8 )*{} = "mm";
(  15, -8 )*{} = "mr";
{ \ar@{-} "r"; "mr" };
{ \ar@{-} "r"; "mm" };
{ \ar@{-} "r"; "ml" };
(  9, -12 )*{} = "mll";
(  12, -12 )*{} = "mlm";
(  15, -12 )*{} = "mlr";
{ \ar@{-} "mm"; "mll" };
{ \ar@{-} "mm"; "mlm" };
{ \ar@{-} "mm"; "mlr" };
\end{xy}
}
\bigplus
\adjustbox{valign=m}{
\begin{xy}
(  3,   0 )*{} = "root";
(  0,  -4 )*{} = "l";
(  3,  -4 )*{} = "m";
( 12,  -4 )*{} = "r";
{ \ar@{-} "root"; "l" };
{ \ar@{-} "root"; "m" };
{ \ar@{-} "root"; "r" };
( 0, -8 )*{} = "ml";
(  3, -8 )*{} = "mm";
(  6, -8 )*{} = "mr";
{ \ar@{-} "m"; "ml" };
{ \ar@{-} "m"; "mm" };
{ \ar@{-} "m"; "mr" };
( 9, -8 )*{} = "ml";
(  12, -8 )*{} = "mm";
(  15, -8 )*{} = "mr";
{ \ar@{-} "r"; "mr" };
{ \ar@{-} "r"; "mm" };
{ \ar@{-} "r"; "ml" };
(  12, -12 )*{} = "mll";
(  15, -12 )*{} = "mlm";
(  18, -12 )*{} = "mlr";
{ \ar@{-} "mr"; "mll" };
{ \ar@{-} "mr"; "mlm" };
{ \ar@{-} "mr"; "mlr" };
\end{xy}
}
$}
&\qquad
&
\adjustbox{valign=t}{$
\theta
=
\adjustbox{valign=m}{
\begin{xy}
(  0,   0 )*{\bullet} = "root";
( -3,  -4 )*{} = "l";
(  0,  -4 )*{} = "m";
(  3,  -4 )*{} = "r";
{ \ar@{-} "root"; "l" };
{ \ar@{-} "root"; "m" };
{ \ar@{-} "root"; "r" };
(  0,  -8 )*{} = "rl";
(  3,  -8 )*{} = "rm";
( 12,  -8 )*{} = "rr";
{ \ar@{-} "r"; "rl" };
{ \ar@{-} "r"; "rm" };
{ \ar@{-} "r"; "rr" };
(  0, -12 )*{} = "rml";
(  3, -12 )*{} = "rmm";
(  6, -12 )*{} = "rmr";
{ \ar@{-} "rm"; "rml" };
{ \ar@{-} "rm"; "rmm" };
{ \ar@{-} "rm"; "rmr" };
(  9, -12 )*{} = "rrl";
( 12, -12 )*{} = "rrm";
( 15, -12 )*{} = "rrr";
{ \ar@{-} "rr"; "rrl" };
{ \ar@{-} "rr"; "rrm" };
{ \ar@{-} "rr"; "rrr" };
\end{xy}
}
$}
\qquad
\adjustbox{valign=t}{$
\nu
=
\adjustbox{valign=m}{
\begin{xy}
(  3,   0 )*{\bullet} = "root";
(  0,  -4 )*{} = "l";
(  3,  -4 )*{} = "m";
(  6,  -4 )*{} = "r";
{ \ar@{-} "root"; "l" };
{ \ar@{-} "root"; "m" };
{ \ar@{-} "root"; "r" };
( 3, -8 )*{} = "rl";
(  6, -8 )*{} = "rm";
(  9, -8 )*{} = "rr";
{ \ar@{-} "r"; "rl" };
{ \ar@{-} "r"; "rm" };
{ \ar@{-} "r"; "rr" };
( 6, -12 )*{} = "rrl";
(  9, -12 )*{} = "rrm";
(  12, -12 )*{} = "rrr";
{ \ar@{-} "rr"; "rrr" };
{ \ar@{-} "rr"; "rrm" };
{ \ar@{-} "rr"; "rrl" };
(  9, -16 )*{} = "rrml";
(  12, -16 )*{} = "rrmm";
(  15, -16 )*{} = "rrmr";
{ \ar@{-} "rrr"; "rrml" };
{ \ar@{-} "rrr"; "rrmm" };
{ \ar@{-} "rrr"; "rrmr" };
\end{xy}
}
$}
\end{alignat*}
\end{theorem}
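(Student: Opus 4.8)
The plan is to apply the operadic analogue of Buchberger's criterion stated at the end of Section~2: the finite set $G=\{\alpha,\beta,\eta,\theta,\nu\}$ is a Gr\"obner basis for $\langle\alpha\rangle$ if and only if, for every small common multiple $f$ of the leading monomials of two (not necessarily distinct) elements of $G$, the reduction of $S(f,g,h)$ by $G$ is $0$. Since the operation has even degree, all Koszul signs equal $+1$, so $\alpha=t\circ_1t+t\circ_2t+t\circ_3t$ and the rewrite rule \eqref{frr} gives $\ell m(\alpha)=t\circ_1t$; thus a tree is reducible by $\alpha$ exactly when it has an internal node whose leftmost child is internal. I would run the completion algorithm arity by arity (equivalently by weight $w$, since $\ell=2w+1$ for $\oa=3$), at each stage forming the S-polynomials of the overlaps discovered so far and adjoining the irreducible remainders as new basis elements.

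First, at weight $3$ I would find all small common multiples of $\ell m(\alpha)=t\circ_1t$ with itself. Because $t\circ_1t$ consists of a node together with its internal left child, two copies share a single internal node only in the left comb $(t\circ_1t)\circ_1t$, with $\mathrm{path}=(3,3,3,2,2,1,1)$. In $S(f,\alpha,\alpha)$ the two leading terms cancel by the associativity of $\circ_1$ recorded in the first case of Lemma~\ref{pcrules} (namely $(t\circ_1t)\circ_1t=t\circ_1(t\circ_1t)$), leaving $(t\circ_2t)\circ_1t+(t\circ_3t)\circ_1t-t\circ_1(t\circ_2t)-t\circ_1(t\circ_3t)$. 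Each summand still has an internal left child at its root, hence is reducible by $\alpha$; carrying out the reductions via Definition~\ref{Mpqf} and \eqref{frr}, the normal form is the weight-$3$ element $\beta$, whose leading monomial is $t\circ_2(t\circ_3t)$.

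Next, at weight $4$ the productive small common multiples are the overlaps of $\ell m(\beta)$ with $\ell m(\alpha)$ (sharing one internal node) and of $\ell m(\beta)$ with itself (sharing two internal nodes); the self-overlaps of $\ell m(\alpha)$ at this weight are disjoint leaf-grafts and reduce to $0$. Forming the corresponding S-polynomials and reducing them by $\{\alpha,\beta\}$, the nonzero normal forms are $\eta$ (two terms), $\theta$, and $\nu$ (each a single monomial); that $\theta$ and $\nu$ are single monomials records the fact that the trees $\ell m(\theta)$ and $\ell m(\nu)$ already vanish in the quotient. I would tabulate the five leading monomials, verify that none of them divides another (so $G$ is reduced), and check that every non-leading term of each element is a normal monomial.

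The main obstacle is completeness: one must verify that every small common multiple of two leading monomials from $G$ has S-polynomial reducing to $0$ modulo $G$, and in particular that the completion does not spawn a sixth element. The disjoint common multiples reduce to $0$ automatically, so the genuine work is the finitely many overlapping cases. I expect the hardest to be the self-overlaps of $\ell m(\beta)$ and the overlaps of $\ell m(\beta)$ with $\ell m(\eta)$, where the S-polynomials carry the most terms; these are greatly shortened by the monomial relations $\theta$ and $\nu$, which annihilate every tree containing $\ell m(\theta)$ or $\ell m(\nu)$. As an independent confirmation that no basis element has been overlooked, I would compare the generating series of the normal monomials (the trees divisible by none of the five leading monomials), computed arity by arity, with the dimensions of the quotient obtained directly by linear algebra; agreement certifies, via the Buchberger criterion, that $G$ is the reduced Gr\"obner basis.
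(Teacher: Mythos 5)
Your proposal follows the same skeleton as the paper's proof --- Buchberger completion starting from the unique proper self-overlap of $\ell m(\alpha)$, which yields $\beta$ exactly as in Lemma \ref{degree7}, followed by verification that every remaining S-polynomial reduces to zero --- but your weight-4 step, which is where the real content of the theorem lies, contains two concrete errors. First, $\ell m(\beta)=t\circ_2(t\circ_3 t)$ has \emph{no} self-overlap sharing two internal nodes: such an overlap would force the bottom node of one copy, which is the \emph{third} child of its parent, to coincide with the middle node of the other copy, which is the \emph{second} child of its parent, and a node has only one parent and one position under it. The proper self-overlaps of $\ell m(\beta)$ share exactly one node and occur in arity $11$ (weight $5$), so they contribute nothing at weight $4$, and $\theta$ and $\nu$ do not arise the way you claim. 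In fact all the weight-4 basis elements come from the four overlaps of $\ell m(\alpha)$ with $\ell m(\beta)$, treated in Lemmas \ref{degree9case1}--\ref{degree9case4}.

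Second, reducing those four S-polynomials by $\{\alpha,\beta\}$ does \emph{not} produce $\eta$, $\theta$, $\nu$ as normal forms; it produces the four polynomials $\gamma$, $\delta$, $\epsilon$, $\zeta$ of the paper. Among these there are only two distinct leading monomials, namely $\ell m(\gamma)=\ell m(\zeta)=\ell m(\theta)$ and $\ell m(\delta)=\ell m(\epsilon)=\ell m(\eta)$, while their span is three-dimensional. Consequently the set $\{\alpha,\beta,\gamma,\delta,\epsilon,\zeta\}$ output by your procedure, run literally, is not even a Gr\"obner basis: the monomial $\ell m(\nu)$ lies in the ideal but is divisible by none of those leading monomials. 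An inter-reduction (linear algebra) step among the new elements is indispensable to extract three elements with three distinct leading monomials; this is precisely the paper's Lemma \ref{degree9simplify}, which exhibits $\eta$, $\theta$, $\nu$ as a basis of $\mathrm{span}(\gamma,\delta,\epsilon,\zeta)$ adapted to the path-lex order. Finally, a caution on your proposed confirmation by counting normal monomials: since normal monomials always span the quotient, agreement of counts with independently computed dimensions in \emph{every} arity would indeed certify a Gr\"obner basis, but that is an infinite family of checks with no finite bound supplied (and the paper's dimension formula is itself derived from the Gr\"obner basis), so it cannot replace the finitely many S-polynomial reductions of the Buchberger criterion --- including the $\alpha$-$\eta$, $\beta$-$\eta$, $\eta$-$\eta$ and weight-5 $\beta$-$\beta$ overlaps --- which you rightly identify as the remaining work.
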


\begin{proof}
The proof consists of Lemmas \ref{degree7} to \ref{degree9finished}.
\end{proof}

\begin{remark}
As nonassociative polynomials, the relations of Theorem \ref{eventheorem} are:
\begin{align*}
&
(({\ast}{\ast}{\ast}){\ast}{\ast}) +
({\ast}({\ast}{\ast}{\ast}){\ast}) +
({\ast}{\ast}({\ast}{\ast}{\ast})),
\;
({\ast}({\ast}{\ast}({\ast}{\ast}{\ast})){\ast}) +
({\ast}{\ast}({\ast}({\ast}{\ast}{\ast}){\ast})) +
({\ast}{\ast}({\ast}{\ast}({\ast}{\ast}{\ast}))),
\\
&
({\ast}({\ast}{\ast}{\ast})({\ast}({\ast}{\ast}{\ast}){\ast})) +
({\ast}({\ast}{\ast}{\ast})({\ast}{\ast}({\ast}{\ast}{\ast}))),
\;
({\ast}{\ast}({\ast}({\ast}{\ast}{\ast})({\ast}{\ast}{\ast}))),
\;
({\ast}{\ast}({\ast}{\ast}({\ast}{\ast}({\ast}{\ast}{\ast})))).
\end{align*}
\end{remark}

\begin{lemma}
\label{degree7}
There is only one SCM of $\ell m(\alpha)$ with itself;
this produces reduced S-polynomial $\beta$, and the set $\{ \alpha, \beta \}$ is self-reduced:
\[
\beta
=
\adjustbox{valign=m}{
\begin{xy}
(  0,  0 )*{\bullet} = "root";
( -3, -4 )*{} = "l";
(  0, -4 )*{} = "m";
(  3, -4 )*{} = "r";
{ \ar@{-} "root"; "l" };
{ \ar@{-} "root"; "m" };
{ \ar@{-} "root"; "r" };
( -3, -8 )*{} = "ml";
(  0, -8 )*{} = "mm";
(  3, -8 )*{} = "mr";
{ \ar@{-} "m"; "ml" };
{ \ar@{-} "m"; "mm" };
{ \ar@{-} "m"; "mr" };
(  0, -12 )*{} = "mrl";
(  3, -12 )*{} = "mrm";
(  6, -12 )*{} = "mrr";
{ \ar@{-} "mr"; "mrl" };
{ \ar@{-} "mr"; "mrm" };
{ \ar@{-} "mr"; "mrr" };
\end{xy}
}
\bigplus
\adjustbox{valign=m}{
\begin{xy}
(  0,  0 )*{} = "root";
( -3, -4 )*{} = "l";
(  0, -4 )*{} = "m";
(  3, -4 )*{} = "r";
{ \ar@{-} "root"; "l" };
{ \ar@{-} "root"; "m" };
{ \ar@{-} "root"; "r" };
(  0, -8 )*{} = "rl";
(  3, -8 )*{} = "rm";
(  6, -8 )*{} = "rr";
{ \ar@{-} "r"; "rl" };
{ \ar@{-} "r"; "rm" };
{ \ar@{-} "r"; "rr" };
(  0, -12 )*{} = "rml";
(  3, -12 )*{} = "rmm";
(  6, -12 )*{} = "rmr";
{ \ar@{-} "rm"; "rml" };
{ \ar@{-} "rm"; "rmm" };
{ \ar@{-} "rm"; "rmr" };
\end{xy}
}
\bigplus
\adjustbox{valign=m}{
\begin{xy}
(  0,  0 )*{} = "root";
( -3, -4 )*{} = "l";
(  0, -4 )*{} = "m";
(  3, -4 )*{} = "r";
{ \ar@{-} "root"; "l" };
{ \ar@{-} "root"; "m" };
{ \ar@{-} "root"; "r" };
(  0, -8 )*{} = "rl";
(  3, -8 )*{} = "rm";
(  6, -8 )*{} = "rr";
{ \ar@{-} "r"; "rl" };
{ \ar@{-} "r"; "rm" };
{ \ar@{-} "r"; "rr" };
(  3, -12 )*{} = "rrl";
(  6, -12 )*{} = "rrm";
(  9, -12 )*{} = "rrr";
{ \ar@{-} "rr"; "rrl" };
{ \ar@{-} "rr"; "rrm" };
{ \ar@{-} "rr"; "rrr" };
\end{xy}
}
=
t \circ_2 ( t \circ_3 t )
+ t \circ_3 ( t \circ_2 t )
+ t \circ_3 ( t \circ_3 t ).
\]
\end{lemma}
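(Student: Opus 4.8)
The plan is to establish the three assertions in turn: the uniqueness of the SCM, the computation of the reduced S-polynomial $\beta$, and self-reducedness. First I would record that among the three monomials of $\alpha$ the leading one is $\ell m(\alpha) = t \circ_1 t$, with $\mathrm{path}(t \circ_1 t) = (2,2,2,1,1)$, which dominates $\mathrm{path}(t \circ_2 t) = (1,2,2,2,1)$ and $\mathrm{path}(t \circ_3 t) = (1,1,2,2,2)$ in path-lex order; this also matches the bullet placed at the root in the displayed $\alpha$.

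For the SCM count, I would view $\ell m(\alpha)$ as the shape ``a node whose first child is internal,'' whose two internal nodes lie along the leftmost branch. An SCM of $\ell m(\alpha)$ with itself is a genuine overlap of two such copies sharing an internal node, with $\ell(p) < 2\,\ell(\ell m(\alpha)) = 10$. Matching the bottom internal node of one copy to the top internal node of the other forces both copies to stack along the leftmost branch, yielding the single tree $p = t \circ_1 ( t \circ_1 t ) = ( t \circ_1 t ) \circ_1 t$ of arity $7$, with $\mathrm{path}(p) = (3,3,3,2,2,1,1)$. I would then check that identifying the two copies in any other way either repeats a node (returning $\ell m(\alpha)$ itself, of arity $5$) or fails to share an internal node, so that $p$ is the only SCM.

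Next I would form the S-polynomial from the two embeddings of $\alpha$ into $p$: replacing the outer copy of $\ell m(\alpha)$ gives $\alpha \circ_1 t$ and replacing the inner copy gives $t \circ_1 \alpha$, so $S = \alpha \circ_1 t - t \circ_1 \alpha$. Since $( t \circ_1 t ) \circ_1 t = t \circ_1 ( t \circ_1 t ) = p$ by the first case of Lemma \ref{pcrules}, the leading terms cancel and $S = ( t \circ_2 t ) \circ_1 t + ( t \circ_3 t ) \circ_1 t - t \circ_1 ( t \circ_2 t ) - t \circ_1 ( t \circ_3 t )$. I would then reduce $S$ modulo $\alpha$ using the rewrite rule $t \circ_1 t \mapsto -( t \circ_2 t ) - ( t \circ_3 t )$ from \eqref{frr}, applying it to every monomial that contains an internal node in first-child position. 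The essential tool throughout is Lemma \ref{pcrules}: after each rewrite I would use its three cases to return the resulting monomial to a canonical form $t \circ_i ( t \circ_j t )$ or $( t \circ_i t ) \circ_j t$, and then use the fact that a tree is determined by its path sequence to recognize when monomials produced by different orders of composition coincide.

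The step I expect to be the main obstacle is precisely this reduction: it is multi-step (several monomials require two rounds of rewriting before reaching a normal form), and its success hinges on nontrivial coincidences of trees—for example $( t \circ_2 t ) \circ_5 t = ( t \circ_3 t ) \circ_2 t$—which cause the ``spurious'' irreducible monomials arising along the way to cancel in pairs. Carrying this out, I expect each monomial to reduce to a combination of the path-normal monomials $t \circ_2 ( t \circ_3 t )$, $t \circ_3 ( t \circ_2 t )$, $t \circ_3 ( t \circ_3 t )$ (those with no first-child internal node), the cross terms to cancel, and the normal form to be a nonzero scalar multiple of $\beta$; after normalizing the leading coefficient I recover $\beta$ exactly. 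Finally, for self-reducedness I would compute $\ell m(\beta) = t \circ_2 ( t \circ_3 t )$ with $\mathrm{path} = (1,2,2,3,3,3,1)$ and observe that $\ell m(\alpha) \nmid \ell m(\beta)$ (no node of $\ell m(\beta)$ has an internal first child) and $\ell m(\beta) \nmid \ell m(\alpha)$ (on arity grounds, $7 > 5$), so $\{ \alpha, \beta \}$ is self-reduced.
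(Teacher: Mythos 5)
Your proposal is correct and takes essentially the same route as the paper: the unique overlapping SCM $t \circ_1 ( t \circ_1 t )$, the S-polynomial $\alpha \circ_1 t - t \circ_1 \alpha$, iterated reduction by the rewrite rule for $\alpha$ hinging on the tree coincidence $( t \circ_2 t ) \circ_5 t = ( t \circ_3 t ) \circ_2 t$, and normalization of the resulting scalar multiple (the paper obtains $-2\beta$) to the monic polynomial $\beta$. Your explicit path-sequence verification of self-reducedness is a minor addition that the paper leaves implicit in its remark that no further reduction is possible.
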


\begin{proof}
We have $\ell m(\alpha) = t \circ_1 t$ and hence
\[
\ell m(\alpha) \circ_1 t
=
\adjustbox{valign=m}{
\begin{xy}
(  0,  0 )*{} = "root";
( -3, -4 )*{} = "l";
(  0, -4 )*{} = "m";
(  3, -4 )*{} = "r";
{ \ar@{-} "root"; "l" };
{ \ar@{-} "root"; "m" };
{ \ar@{-} "root"; "r" };
( -6, -8 )*{} = "ll";
( -3, -8 )*{} = "lm";
(  0, -8 )*{} = "lr";
{ \ar@{-} "l"; "ll" };
{ \ar@{-} "l"; "lm" };
{ \ar@{-} "l"; "lr" };
( -9, -12 )*{} = "lll";
( -6, -12 )*{} = "llm";
( -3, -12 )*{} = "llr";
{ \ar@{-} "ll"; "lll" };
{ \ar@{-} "ll"; "llm" };
{ \ar@{-} "ll"; "llr" };
\end{xy}
}
=
t \circ_1 \ell m(\alpha).
\]
From this we obtain these tree polynomials (Definition \ref{Mpqf}):
\begin{align*}
\alpha \circ_1 t
&=
( t \circ_1 t ) \circ_1 t + ( t \circ_2 t ) \circ_1 t + ( t \circ_3 t ) \circ_1 t
=
\!\!\!\!\!\!\!\!
\adjustbox{valign=m}{
\begin{xy}
(  0,  0 )*{\bullet} = "root";
( -3, -4 )*{} = "l";
(  0, -4 )*{} = "m";
(  3, -4 )*{} = "r";
{ \ar@{-} "root"; "l" };
{ \ar@{-} "root"; "m" };
{ \ar@{-} "root"; "r" };
( -6, -8 )*{} = "ll";
( -3, -8 )*{} = "lm";
(  0, -8 )*{} = "lr";
{ \ar@{-} "l"; "ll" };
{ \ar@{-} "l"; "lm" };
{ \ar@{-} "l"; "lr" };
( -9, -12 )*{} = "lll";
( -6, -12 )*{} = "llm";
( -3, -12 )*{} = "llr";
{ \ar@{-} "ll"; "lll" };
{ \ar@{-} "ll"; "llm" };
{ \ar@{-} "ll"; "llr" };
\end{xy}
}
\bigplus
\!\!
\adjustbox{valign=m}{
\begin{xy}
(  0,  0 )*{} = "root";
( -9, -4 )*{} = "l";
(  0, -4 )*{} = "m";
(  3, -4 )*{} = "r";
{ \ar@{-} "root"; "l" };
{ \ar@{-} "root"; "m" };
{ \ar@{-} "root"; "r" };
( -12, -8 )*{} = "ll";
(  -9, -8 )*{} = "lm";
(  -6, -8 )*{} = "lr";
{ \ar@{-} "l"; "ll" };
{ \ar@{-} "l"; "lm" };
{ \ar@{-} "l"; "lr" };
( -3, -8 )*{} = "ml";
(  0, -8 )*{} = "mm";
(  3, -8 )*{} = "mr";
{ \ar@{-} "m"; "ml" };
{ \ar@{-} "m"; "mm" };
{ \ar@{-} "m"; "mr" };
\end{xy}
}
\bigplus
\!\!
\adjustbox{valign=m}{
\begin{xy}
(  0,  0 )*{} = "root";
( -4.5, -4 )*{} = "l";
(  0, -4 )*{} = "m";
(  4.5, -4 )*{} = "r";
{ \ar@{-} "root"; "l" };
{ \ar@{-} "root"; "m" };
{ \ar@{-} "root"; "r" };
( -7.5, -8 )*{} = "ll";
( -4.5, -8 )*{} = "lm";
( -1.5, -8 )*{} = "lr";
{ \ar@{-} "l"; "ll" };
{ \ar@{-} "l"; "lm" };
{ \ar@{-} "l"; "lr" };
( 1.5, -8 )*{} = "rl";
( 4.5, -8 )*{} = "rm";
( 7.5, -8 )*{} = "rr";
{ \ar@{-} "r"; "rl" };
{ \ar@{-} "r"; "rm" };
{ \ar@{-} "r"; "rr" };
\end{xy}
}
\\[1mm]
t \circ_1 \alpha
&=
t \circ_1 ( t \circ_1 t ) + t \circ_1 ( t \circ_2 t ) + t \circ_1 ( t \circ_3 t )
=
\!\!\!\!\!\!\!\!
\adjustbox{valign=m}{
\begin{xy}
(  0,  0 )*{\bullet} = "root";
( -3, -4 )*{} = "l";
(  0, -4 )*{} = "m";
(  3, -4 )*{} = "r";
{ \ar@{-} "root"; "l" };
{ \ar@{-} "root"; "m" };
{ \ar@{-} "root"; "r" };
( -6, -8 )*{} = "ll";
( -3, -8 )*{} = "lm";
(  0, -8 )*{} = "lr";
{ \ar@{-} "l"; "ll" };
{ \ar@{-} "l"; "lm" };
{ \ar@{-} "l"; "lr" };
( -9, -12 )*{} = "lll";
( -6, -12 )*{} = "llm";
( -3, -12 )*{} = "llr";
{ \ar@{-} "ll"; "lll" };
{ \ar@{-} "ll"; "llm" };
{ \ar@{-} "ll"; "llr" };
\end{xy}
}
\bigplus
\adjustbox{valign=m}{
\begin{xy}
(  0,  0 )*{} = "root";
( -3, -4 )*{} = "l";
(  0, -4 )*{} = "m";
(  3, -4 )*{} = "r";
{ \ar@{-} "root"; "l" };
{ \ar@{-} "root"; "m" };
{ \ar@{-} "root"; "r" };
( -6, -8 )*{} = "ll";
( -3, -8 )*{} = "lm";
(  0, -8 )*{} = "lr";
{ \ar@{-} "l"; "ll" };
{ \ar@{-} "l"; "lm" };
{ \ar@{-} "l"; "lr" };
( -6, -12 )*{} = "lml";
( -3, -12 )*{} = "lmm";
(  0, -12 )*{} = "lmr";
{ \ar@{-} "lm"; "lml" };
{ \ar@{-} "lm"; "lmm" };
{ \ar@{-} "lm"; "lmr" };
\end{xy}
}
\bigplus
\adjustbox{valign=m}{
\begin{xy}
(  0,  0 )*{} = "root";
( -3, -4 )*{} = "l";
(  0, -4 )*{} = "m";
(  3, -4 )*{} = "r";
{ \ar@{-} "root"; "l" };
{ \ar@{-} "root"; "m" };
{ \ar@{-} "root"; "r" };
( -6, -8 )*{} = "ll";
( -3, -8 )*{} = "lm";
(  0, -8 )*{} = "lr";
{ \ar@{-} "l"; "ll" };
{ \ar@{-} "l"; "lm" };
{ \ar@{-} "l"; "lr" };
( -3, -12 )*{} = "lrl";
(  0, -12 )*{} = "lrm";
(  3, -12 )*{} = "lrr";
{ \ar@{-} "lr"; "lrl" };
{ \ar@{-} "lr"; "lrm" };
{ \ar@{-} "lr"; "lrr" };
\end{xy}
}
\end{align*}
The difference is this (non-reduced) S-polynomial:
\begin{align*}
\alpha \circ_1 t - t \circ_1 \alpha
&=
( t \circ_2 t ) \circ_1 t + ( t \circ_3 t ) \circ_1 t
- t \circ_1 ( t \circ_2 t ) - t \circ_1 ( t \circ_3 t )
\\
&= \;
\adjustbox{valign=m}{
\begin{xy}
(  0,  0 )*{} = "root";
( -9, -4 )*{} = "l";
(  0, -4 )*{} = "m";
(  6, -4 )*{} = "r";
{ \ar@{-} "root"; "l" };
{ \ar@{-} "root"; "m" };
{ \ar@{-} "root"; "r" };
( -12, -8 )*{} = "ll";
(  -9, -8 )*{} = "lm";
(  -6, -8 )*{} = "lr";
{ \ar@{-} "l"; "ll" };
{ \ar@{-} "l"; "lm" };
{ \ar@{-} "l"; "lr" };
( -3, -8 )*{} = "ml";
(  0, -8 )*{} = "mm";
(  3, -8 )*{} = "mr";
{ \ar@{-} "m"; "ml" };
{ \ar@{-} "m"; "mm" };
{ \ar@{-} "m"; "mr" };
\end{xy}
}
\bigplus
\adjustbox{valign=m}{
\begin{xy}
(  0,  0 )*{} = "root";
( -6, -4 )*{} = "l";
(  0, -4 )*{} = "m";
(  6, -4 )*{} = "r";
{ \ar@{-} "root"; "l" };
{ \ar@{-} "root"; "m" };
{ \ar@{-} "root"; "r" };
( -9, -8 )*{} = "ll";
( -6, -8 )*{} = "lm";
( -3, -8 )*{} = "lr";
{ \ar@{-} "l"; "ll" };
{ \ar@{-} "l"; "lm" };
{ \ar@{-} "l"; "lr" };
(  3, -8 )*{} = "rl";
(  6, -8 )*{} = "rm";
(  9, -8 )*{} = "rr";
{ \ar@{-} "r"; "rl" };
{ \ar@{-} "r"; "rm" };
{ \ar@{-} "r"; "rr" };
\end{xy}
}
\bigminus
\adjustbox{valign=m}{
\begin{xy}
(  0,  0 )*{\bullet} = "root";
( -3, -4 )*{} = "l";
(  0, -4 )*{} = "m";
(  3, -4 )*{} = "r";
{ \ar@{-} "root"; "l" };
{ \ar@{-} "root"; "m" };
{ \ar@{-} "root"; "r" };
( -6, -8 )*{} = "ll";
( -3, -8 )*{} = "lm";
(  0, -8 )*{} = "lr";
{ \ar@{-} "l"; "ll" };
{ \ar@{-} "l"; "lm" };
{ \ar@{-} "l"; "lr" };
( -6, -12 )*{} = "lml";
( -3, -12 )*{} = "lmm";
(  0, -12 )*{} = "lmr";
{ \ar@{-} "lm"; "lml" };
{ \ar@{-} "lm"; "lmm" };
{ \ar@{-} "lm"; "lmr" };
\end{xy}
}
\bigminus
\adjustbox{valign=m}{
\begin{xy}
(  0,  0 )*{} = "root";
( -3, -4 )*{} = "l";
(  0, -4 )*{} = "m";
(  3, -4 )*{} = "r";
{ \ar@{-} "root"; "l" };
{ \ar@{-} "root"; "m" };
{ \ar@{-} "root"; "r" };
( -6, -8 )*{} = "ll";
( -3, -8 )*{} = "lm";
(  0, -8 )*{} = "lr";
{ \ar@{-} "l"; "ll" };
{ \ar@{-} "l"; "lm" };
{ \ar@{-} "l"; "lr" };
( -3, -12 )*{} = "lrl";
(  0, -12 )*{} = "lrm";
(  3, -12 )*{} = "lrr";
{ \ar@{-} "lr"; "lrl" };
{ \ar@{-} "lr"; "lrm" };
{ \ar@{-} "lr"; "lrr" };
\end{xy}
}
\\
&=
( t \circ_1 t ) \circ_4 t
+
( t \circ_1 t ) \circ_5 t
-
( t \circ_1 t ) \circ_2 t
-
( t \circ_1 t ) \circ_3 t.
\end{align*}
We have rewritten the partial compositions (Lemma \ref{pcrules}).
We apply rewrite rule \eqref{frr} to the top subtree $\ell m(\alpha) = t \circ_1 t$
of each monomial (reduce using $\alpha$):
\[
\begin{array}{l}
{}
- ( t \circ_2 t ) \circ_4 t
- ( t \circ_3 t ) \circ_4 t
- ( t \circ_2 t ) \circ_5 t
- ( t \circ_3 t ) \circ_5 t
\\[1mm]
{}
+ ( t \circ_2 t ) \circ_2 t
+ ( t \circ_3 t ) \circ_2 t
+ ( t \circ_2 t ) \circ_3 t
+ ( t \circ_3 t ) \circ_3 t.
\end{array}
\]
Terms 3 and 6 cancel since both monomials represent the same tree:
\[
( t \circ_2 t ) \circ_5 t
\; = \;
( t \circ_3 t ) \circ_2 t
\; = \;
\adjustbox{valign=m}{
\begin{xy}
(  0,  0 )*{} = "root";
( -6, -4 )*{} = "l";
(  0, -4 )*{} = "m";
(  9, -4 )*{} = "r";
{ \ar@{-} "root"; "l" };
{ \ar@{-} "root"; "m" };
{ \ar@{-} "root"; "r" };
( -3, -8 )*{} = "ml";
(  0, -8 )*{} = "mm";
(  3, -8 )*{} = "mr";
{ \ar@{-} "m"; "ml" };
{ \ar@{-} "m"; "mm" };
{ \ar@{-} "m"; "mr" };
(  6, -8 )*{} = "rl";
(  9, -8 )*{} = "rm";
( 12, -8 )*{} = "rr";
{ \ar@{-} "r"; "rl" };
{ \ar@{-} "r"; "rm" };
{ \ar@{-} "r"; "rr" };
\end{xy}
}
\]
Six terms remain:
\begin{align*}
&
{}
- ( t \circ_2 t ) \circ_4 t
- ( t \circ_3 t ) \circ_4 t
- ( t \circ_3 t ) \circ_5 t
+ ( t \circ_2 t ) \circ_2 t
+ ( t \circ_2 t ) \circ_3 t
+ ( t \circ_3 t ) \circ_3 t
\\
&= \,
\bigminus
\adjustbox{valign=m}{
\begin{xy}
(  0,  0 )*{} = "root";
( -3, -4 )*{} = "l";
(  0, -4 )*{} = "m";
(  3, -4 )*{} = "r";
{ \ar@{-} "root"; "l" };
{ \ar@{-} "root"; "m" };
{ \ar@{-} "root"; "r" };
( -3, -8 )*{} = "ml";
(  0, -8 )*{} = "mm";
(  3, -8 )*{} = "mr";
{ \ar@{-} "m"; "ml" };
{ \ar@{-} "m"; "mm" };
{ \ar@{-} "m"; "mr" };
(  0, -12 )*{} = "mrl";
(  3, -12 )*{} = "mrm";
(  6, -12 )*{} = "mrr";
{ \ar@{-} "mr"; "mrl" };
{ \ar@{-} "mr"; "mrm" };
{ \ar@{-} "mr"; "mrr" };
\end{xy}
}
\bigminus
\adjustbox{valign=m}{
\begin{xy}
(  0,  0 )*{} = "root";
( -3, -4 )*{} = "l";
(  0, -4 )*{} = "m";
(  3, -4 )*{} = "r";
{ \ar@{-} "root"; "l" };
{ \ar@{-} "root"; "m" };
{ \ar@{-} "root"; "r" };
(  0, -8 )*{} = "rl";
(  3, -8 )*{} = "rm";
(  6, -8 )*{} = "rr";
{ \ar@{-} "r"; "rl" };
{ \ar@{-} "r"; "rm" };
{ \ar@{-} "r"; "rr" };
(  0, -12 )*{} = "rml";
(  3, -12 )*{} = "rmm";
(  6, -12 )*{} = "rmr";
{ \ar@{-} "rm"; "rml" };
{ \ar@{-} "rm"; "rmm" };
{ \ar@{-} "rm"; "rmr" };
\end{xy}
}
\bigminus
\adjustbox{valign=m}{
\begin{xy}
(  0,  0 )*{} = "root";
( -3, -4 )*{} = "l";
(  0, -4 )*{} = "m";
(  3, -4 )*{} = "r";
{ \ar@{-} "root"; "l" };
{ \ar@{-} "root"; "m" };
{ \ar@{-} "root"; "r" };
(  0, -8 )*{} = "rl";
(  3, -8 )*{} = "rm";
(  6, -8 )*{} = "rr";
{ \ar@{-} "r"; "rl" };
{ \ar@{-} "r"; "rm" };
{ \ar@{-} "r"; "rr" };
(  3, -12 )*{} = "rrl";
(  6, -12 )*{} = "rrm";
(  9, -12 )*{} = "rrr";
{ \ar@{-} "rr"; "rrl" };
{ \ar@{-} "rr"; "rrm" };
{ \ar@{-} "rr"; "rrr" };
\end{xy}
}
\bigplus
\adjustbox{valign=m}{
\begin{xy}
(  0,  0 )*{\bullet} = "root";
( -3, -4 )*{} = "l";
(  0, -4 )*{} = "m";
(  3, -4 )*{} = "r";
{ \ar@{-} "root"; "l" };
{ \ar@{-} "root"; "m" };
{ \ar@{-} "root"; "r" };
( -3, -8 )*{} = "ml";
(  0, -8 )*{} = "mm";
(  3, -8 )*{} = "mr";
{ \ar@{-} "m"; "ml" };
{ \ar@{-} "m"; "mm" };
{ \ar@{-} "m"; "mr" };
( -6, -12 )*{} = "mll";
( -3, -12 )*{} = "mlm";
(  0, -12 )*{} = "mlr";
{ \ar@{-} "ml"; "mll" };
{ \ar@{-} "ml"; "mlm" };
{ \ar@{-} "ml"; "mlr" };
\end{xy}
}
\bigplus
\adjustbox{valign=m}{
\begin{xy}
(  0,  0 )*{} = "root";
( -3, -4 )*{} = "l";
(  0, -4 )*{} = "m";
(  3, -4 )*{} = "r";
{ \ar@{-} "root"; "l" };
{ \ar@{-} "root"; "m" };
{ \ar@{-} "root"; "r" };
( -3, -8 )*{} = "ml";
(  0, -8 )*{} = "mm";
(  3, -8 )*{} = "mr";
{ \ar@{-} "m"; "ml" };
{ \ar@{-} "m"; "mm" };
{ \ar@{-} "m"; "mr" };
( -3, -12 )*{} = "mml";
(  0, -12 )*{} = "mmm";
(  3, -12 )*{} = "mmr";
{ \ar@{-} "mm"; "mml" };
{ \ar@{-} "mm"; "mmm" };
{ \ar@{-} "mm"; "mmr" };
\end{xy}
}
\bigplus
\adjustbox{valign=m}{
\begin{xy}
(  0,  0 )*{} = "root";
( -3, -4 )*{} = "l";
(  0, -4 )*{} = "m";
(  3, -4 )*{} = "r";
{ \ar@{-} "root"; "l" };
{ \ar@{-} "root"; "m" };
{ \ar@{-} "root"; "r" };
(  0, -8 )*{} = "rl";
(  3, -8 )*{} = "rm";
(  6, -8 )*{} = "rr";
{ \ar@{-} "r"; "rl" };
{ \ar@{-} "r"; "rm" };
{ \ar@{-} "r"; "rr" };
( -3, -12 )*{} = "rll";
(  0, -12 )*{} = "rlm";
(  3, -12 )*{} = "rlr";
{ \ar@{-} "rl"; "rll" };
{ \ar@{-} "rl"; "rlm" };
{ \ar@{-} "rl"; "rlr" };
\end{xy}
}
=
\\
&
{}
- t \circ_2 ( t \circ_3 t )
- t \circ_3 ( t \circ_2 t )
- t \circ_3 ( t \circ_3 t )
+ t \circ_2 ( t \circ_1 t )
+ t \circ_2 ( t \circ_2 t )
+ t \circ_3 ( t \circ_1 t ).
\end{align*}
In terms 4 and 6, we reduce the bottom subtree $\ell m(\alpha) = t \circ_1 t$ using $\alpha$:
\[
\begin{array}{l}
{}
- t \circ_2 ( t \circ_3 t )
- t \circ_3 ( t \circ_2 t )
- t \circ_3 ( t \circ_3 t )
- t \circ_2 ( t \circ_2 t )
\\[1mm]
{}
- t \circ_2 ( t \circ_3 t )
+ t \circ_2 ( t \circ_2 t )
- t \circ_3 ( t \circ_2 t )
- t \circ_3 ( t \circ_3 t ).
\end{array}
\]
Terms 4 and 6 cancel and the others combine in pairs:
\[
-2 \big[
  t \circ_2 ( t \circ_3 t )
+ t \circ_3 ( t \circ_2 t )
+ t \circ_3 ( t \circ_3 t )
\big]
=
-2 \! \left[
\adjustbox{valign=m}{
\begin{xy}
(  0,  0 )*{\bullet} = "root";
( -3, -4 )*{} = "l";
(  0, -4 )*{} = "m";
(  3, -4 )*{} = "r";
{ \ar@{-} "root"; "l" };
{ \ar@{-} "root"; "m" };
{ \ar@{-} "root"; "r" };
( -3, -8 )*{} = "ml";
(  0, -8 )*{} = "mm";
(  3, -8 )*{} = "mr";
{ \ar@{-} "m"; "ml" };
{ \ar@{-} "m"; "mm" };
{ \ar@{-} "m"; "mr" };
(  0, -12 )*{} = "mrl";
(  3, -12 )*{} = "mrm";
(  6, -12 )*{} = "mrr";
{ \ar@{-} "mr"; "mrl" };
{ \ar@{-} "mr"; "mrm" };
{ \ar@{-} "mr"; "mrr" };
\end{xy}
}
\bigplus
\adjustbox{valign=m}{
\begin{xy}
(  0,  0 )*{} = "root";
( -3, -4 )*{} = "l";
(  0, -4 )*{} = "m";
(  3, -4 )*{} = "r";
{ \ar@{-} "root"; "l" };
{ \ar@{-} "root"; "m" };
{ \ar@{-} "root"; "r" };
(  0, -8 )*{} = "rl";
(  3, -8 )*{} = "rm";
(  6, -8 )*{} = "rr";
{ \ar@{-} "r"; "rl" };
{ \ar@{-} "r"; "rm" };
{ \ar@{-} "r"; "rr" };
(  0, -12 )*{} = "rml";
(  3, -12 )*{} = "rmm";
(  6, -12 )*{} = "rmr";
{ \ar@{-} "rm"; "rml" };
{ \ar@{-} "rm"; "rmm" };
{ \ar@{-} "rm"; "rmr" };
\end{xy}
}
\bigplus
\adjustbox{valign=m}{
\begin{xy}
(  0,  0 )*{} = "root";
( -3, -4 )*{} = "l";
(  0, -4 )*{} = "m";
(  3, -4 )*{} = "r";
{ \ar@{-} "root"; "l" };
{ \ar@{-} "root"; "m" };
{ \ar@{-} "root"; "r" };
(  0, -8 )*{} = "rl";
(  3, -8 )*{} = "rm";
(  6, -8 )*{} = "rr";
{ \ar@{-} "r"; "rl" };
{ \ar@{-} "r"; "rm" };
{ \ar@{-} "r"; "rr" };
(  3, -12 )*{} = "rrl";
(  6, -12 )*{} = "rrm";
(  9, -12 )*{} = "rrr";
{ \ar@{-} "rr"; "rrl" };
{ \ar@{-} "rr"; "rrm" };
{ \ar@{-} "rr"; "rrr" };
\end{xy}
}
\right]
\]
No further reduction is possible.
The monic form of the last polynomial is $\beta$.
\end{proof}

The relation $\beta$ corresponds to this rewrite rule:
\begin{equation}
\label{mybeta2}
t \circ_2 ( t \circ_3 t )
=
\adjustbox{valign=m}{
\begin{xy}
(  0,  0 )*{\bullet} = "root";
( -3, -4 )*{} = "l";
(  0, -4 )*{} = "m";
(  3, -4 )*{} = "r";
{ \ar@{-} "root"; "l" };
{ \ar@{-} "root"; "m" };
{ \ar@{-} "root"; "r" };
( -3, -8 )*{} = "ml";
(  0, -8 )*{} = "mm";
(  3, -8 )*{} = "mr";
{ \ar@{-} "m"; "ml" };
{ \ar@{-} "m"; "mm" };
{ \ar@{-} "m"; "mr" };
(  0, -12 )*{} = "mrl";
(  3, -12 )*{} = "mrm";
(  6, -12 )*{} = "mrr";
{ \ar@{-} "mr"; "mrl" };
{ \ar@{-} "mr"; "mrm" };
{ \ar@{-} "mr"; "mrr" };
\end{xy}
}
\!\!\!
\xrightarrow{\quad}
\;
\bigminus
\adjustbox{valign=m}{
\begin{xy}
(  0,  0 )*{} = "root";
( -3, -4 )*{} = "l";
(  0, -4 )*{} = "m";
(  3, -4 )*{} = "r";
{ \ar@{-} "root"; "l" };
{ \ar@{-} "root"; "m" };
{ \ar@{-} "root"; "r" };
(  0, -8 )*{} = "rl";
(  3, -8 )*{} = "rm";
(  6, -8 )*{} = "rr";
{ \ar@{-} "r"; "rl" };
{ \ar@{-} "r"; "rm" };
{ \ar@{-} "r"; "rr" };
(  0, -12 )*{} = "rml";
(  3, -12 )*{} = "rmm";
(  6, -12 )*{} = "rmr";
{ \ar@{-} "rm"; "rml" };
{ \ar@{-} "rm"; "rmm" };
{ \ar@{-} "rm"; "rmr" };
\end{xy}
}
\bigminus
\adjustbox{valign=m}{
\begin{xy}
(  0,  0 )*{} = "root";
( -3, -4 )*{} = "l";
(  0, -4 )*{} = "m";
(  3, -4 )*{} = "r";
{ \ar@{-} "root"; "l" };
{ \ar@{-} "root"; "m" };
{ \ar@{-} "root"; "r" };
(  0, -8 )*{} = "rl";
(  3, -8 )*{} = "rm";
(  6, -8 )*{} = "rr";
{ \ar@{-} "r"; "rl" };
{ \ar@{-} "r"; "rm" };
{ \ar@{-} "r"; "rr" };
(  3, -12 )*{} = "rrl";
(  6, -12 )*{} = "rrm";
(  9, -12 )*{} = "rrr";
{ \ar@{-} "rr"; "rrl" };
{ \ar@{-} "rr"; "rrm" };
{ \ar@{-} "rr"; "rrr" };
\end{xy}
}
\!\!\!
= {}
- t \circ_3 ( t \circ_2 t )
- t \circ_3 ( t \circ_3 t )
\end{equation}

We consider separately the four SCMs of
$\ell m(\alpha) = t \circ_1 t$ and $\ell m(\beta) = t \circ_2 ( t \circ_3 t )$.

\begin{lemma}
\label{degree9case1}
Identifying the second $t$ of $\ell m(\alpha) = t \circ_1 t$
with the first $t$ of $\ell m(\beta) = t \circ_2 (t  \circ_3  t)$
produces the reduced S-polynomial $\gamma$,
and $\{ \alpha, \beta, \gamma \}$ is self-reduced:
\[
\gamma
=
\scalebox{1.5}{\emph{2}}
\adjustbox{valign=m}{
\begin{xy}
(  0,   0 )*{\bullet} = "root";
( -3,  -4 )*{} = "l";
(  0,  -4 )*{} = "m";
(  3,  -4 )*{} = "r";
{ \ar@{-} "root"; "l" };
{ \ar@{-} "root"; "m" };
{ \ar@{-} "root"; "r" };
(  0,  -8 )*{} = "rl";
(  3,  -8 )*{} = "rm";
( 12,  -8 )*{} = "rr";
{ \ar@{-} "r"; "rl" };
{ \ar@{-} "r"; "rm" };
{ \ar@{-} "r"; "rr" };
(  0, -12 )*{} = "rml";
(  3, -12 )*{} = "rmm";
(  6, -12 )*{} = "rmr";
{ \ar@{-} "rm"; "rml" };
{ \ar@{-} "rm"; "rmm" };
{ \ar@{-} "rm"; "rmr" };
(  9, -12 )*{} = "rrl";
( 12, -12 )*{} = "rrm";
( 15, -12 )*{} = "rrr";
{ \ar@{-} "rr"; "rrl" };
{ \ar@{-} "rr"; "rrm" };
{ \ar@{-} "rr"; "rrr" };
\end{xy}
}
\bigplus
\adjustbox{valign=m}{
\begin{xy}
(  3,   0 )*{} = "root";
(  0,  -4 )*{} = "l";
(  3,  -4 )*{} = "m";
(  6,  -4 )*{} = "r";
{ \ar@{-} "root"; "l" };
{ \ar@{-} "root"; "m" };
{ \ar@{-} "root"; "r" };
( 3, -8 )*{} = "rl";
(  6, -8 )*{} = "rm";
(  9, -8 )*{} = "rr";
{ \ar@{-} "r"; "rl" };
{ \ar@{-} "r"; "rm" };
{ \ar@{-} "r"; "rr" };
( 6, -12 )*{} = "rrl";
(  9, -12 )*{} = "rrm";
(  12, -12 )*{} = "rrr";
{ \ar@{-} "rr"; "rrr" };
{ \ar@{-} "rr"; "rrm" };
{ \ar@{-} "rr"; "rrl" };
(  9, -16 )*{} = "rrml";
(  12, -16 )*{} = "rrmm";
(  15, -16 )*{} = "rrmr";
{ \ar@{-} "rrr"; "rrml" };
{ \ar@{-} "rrr"; "rrmm" };
{ \ar@{-} "rrr"; "rrmr" };
\end{xy}
}
=
2
( t \circ_3 ( t \circ_2 t ) ) \circ_7 t
+
t \circ_3 ( t \circ_3 ( t \circ_3 t ) ).
\]
\end{lemma}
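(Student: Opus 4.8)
The plan is to follow the same template as Lemma~\ref{degree7}: identify the SCM determined by the stated overlap, build the S-polynomial, cancel the common leading term, and then reduce what remains by the rewrite rules \eqref{frr} and \eqref{mybeta2} until no monomial is divisible by $\ell m(\alpha)$ or $\ell m(\beta)$.

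First I would pin down the SCM $f$. Identifying the lower $t$ of $\ell m(\alpha)=t\circ_1 t$ with the root $t$ of $\ell m(\beta)=t\circ_2(t\circ_3 t)$ glues the two trees along a single internal node, producing a weight-$4$ tree of arity $9$ with $\ell(f)=9<5+7$, every internal node of which lies in a copy of $\ell m(\alpha)$ or of $\ell m(\beta)$; so $f$ is a genuine SCM. The structural fact I would record, using Lemma~\ref{pcrules}, is that $f$ has the two readings $f=(t\circ_1 t)\circ_2(t\circ_3 t)=t\circ_1\big(t\circ_2(t\circ_3 t)\big)$, the first displaying $\ell m(\alpha)$ at the top and the second displaying $\ell m(\beta)$ just below the root. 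Hence by Definition~\ref{Mpqf} we get $M\big(f,\ell m(\alpha),\alpha\big)=\alpha\circ_2(t\circ_3 t)$ and $M\big(f,\ell m(\beta),\beta\big)=t\circ_1\beta$.

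Next I would form $S(f,\alpha,\beta)=\alpha\circ_2(t\circ_3 t)-t\circ_1\beta$ and expand using the three summands of $\alpha$ and of $\beta$. The two leading monomials both equal $f$ and cancel, leaving the four monomials
\[
(t\circ_2 t)\circ_2(t\circ_3 t)+(t\circ_3 t)\circ_2(t\circ_3 t)-t\circ_1\big(t\circ_3(t\circ_2 t)\big)-t\circ_1\big(t\circ_3(t\circ_3 t)\big).
\]
Rewriting each into a standard composition form via Lemma~\ref{pcrules}, I would then reduce: the two negative monomials carry $\ell m(\alpha)$ at the root, the second monomial carries $\ell m(\beta)$ at the root, and the first carries $\ell m(\alpha)$ one level down. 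Applying \eqref{frr} and \eqref{mybeta2} repeatedly — several monomials demand a second or third rewrite after the first, exactly as in the two rounds of $\alpha$-reduction inside Lemma~\ref{degree7} — turns all four into combinations of irreducible (pattern-avoiding) trees of arity $9$.

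The hard part will be this bookkeeping and, above all, spotting the coincidences that trigger the cancellations. Concretely, I expect the intermediate irreducible monomials $t\circ_2(t\circ_2(t\circ_2 t))$, $t\circ_3(t\circ_2(t\circ_2 t))$ and $t\circ_3(t\circ_3(t\circ_2 t))$ to occur with opposite signs across the four terms and to cancel in pairs; meanwhile several monomials that are written differently are in fact equal as trees — for instance $(t\circ_3(t\circ_3 t))\circ_7 t=t\circ_3(t\circ_3(t\circ_3 t))$, $(t\circ_3(t\circ_3 t))\circ_6 t=t\circ_3(t\circ_3(t\circ_2 t))$, and $t\circ_3\big((t\circ_3 t)\circ_2 t\big)=(t\circ_3(t\circ_2 t))\circ_7 t=\ell m(\gamma)$ — and these must be recognised via the path-sequence criterion so that the surviving coefficients come out correctly. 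After all cancellations only $(t\circ_3(t\circ_2 t))\circ_7 t$, with coefficient $2$, and $t\circ_3(t\circ_3(t\circ_3 t))$ survive, giving $\pm\gamma$; fixing the sign convention yields $\gamma$. Finally I would verify the reducedness claims: computing $\mathrm{path}\big(\ell m(\gamma)\big)=(1,1,2,3,3,3,3,3,3)$ confirms $\ell m(\gamma)=(t\circ_3(t\circ_2 t))\circ_7 t$, and inspection shows this tree has no internal node as a first child and no node whose second child is internal with an internal third child, so neither $\ell m(\alpha)$ nor $\ell m(\beta)$ divides it, while by arity it divides neither of them; the two monomials of $\gamma$ are themselves pattern-avoiding, so $\gamma$ is reduced and $\{\alpha,\beta,\gamma\}$ is self-reduced.
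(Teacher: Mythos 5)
Your proposal is correct and takes essentially the same approach as the paper: the same SCM with its two factorizations $(t\circ_1 t)\circ_2(t\circ_3 t)=t\circ_1\big(t\circ_2(t\circ_3 t)\big)$, the same S-polynomial $\alpha\circ_2(t\circ_3 t)-t\circ_1\beta$, and the same alternating $\alpha$/$\beta$-reduction, and the cancellations you anticipate (in particular the irreducible monomials $t\circ_2(t\circ_2(t\circ_2 t))$, $t\circ_3(t\circ_2(t\circ_2 t))$ and $t\circ_3(t\circ_3(t\circ_2 t))$ appearing with opposite signs) are exactly the ones that occur in the paper's explicit computation, which terminates at $-\gamma$. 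Your closing path-sequence verification that both monomials of $\gamma$ avoid $\ell m(\alpha)$ and $\ell m(\beta)$, so that $\{\alpha,\beta,\gamma\}$ is self-reduced, is correct and is a detail the paper leaves implicit.
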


\begin{proof}
We have the following equations:
\[
\ell m(\alpha) \circ_2 ( t \circ_3 t )
=
( t \circ_1 t ) \circ_2 ( t \circ_3 t )
=
\adjustbox{valign=m}{
\begin{xy}
(  3,   0 )*{} = "root";
(  0,  -4 )*{} = "l";
(  3,  -4 )*{} = "m";
(  6,  -4 )*{} = "r";
{ \ar@{-} "root"; "l" };
{ \ar@{-} "root"; "m" };
{ \ar@{-} "root"; "r" };
( -3, -8 )*{} = "ll";
(  0, -8 )*{} = "lm";
(  3, -8 )*{} = "lr";
{ \ar@{-} "l"; "ll" };
{ \ar@{-} "l"; "lm" };
{ \ar@{-} "l"; "lr" };
( -3, -12 )*{} = "lml";
(  0, -12 )*{} = "lmm";
(  3, -12 )*{} = "lmr";
{ \ar@{-} "lm"; "lml" };
{ \ar@{-} "lm"; "lmm" };
{ \ar@{-} "lm"; "lmr" };
(  0, -16 )*{} = "lmrl";
(  3, -16 )*{} = "lmrm";
(  6, -16 )*{} = "lmrr";
{ \ar@{-} "lmr"; "lmrl" };
{ \ar@{-} "lmr"; "lmrm" };
{ \ar@{-} "lmr"; "lmrr" };
\end{xy}
}
\!\!\!
=
t \circ_1 ( t \circ_2 ( t \circ_3 t ) )
=
t \circ_1 \ell m(\beta).
\]
We apply the same partial compositions to $\alpha$ and $\beta$:
\begin{align*}
\alpha \circ_2 ( t \circ_3 t )
&=
  ( t \circ_1 t ) \circ_2 ( t \circ_3 t )
+ ( t \circ_2 t ) \circ_2 ( t \circ_3 t )
+ ( t \circ_3 t ) \circ_2 ( t \circ_3 t ),
\\
t \circ_1 \beta
&=
  t \circ_1 ( t \circ_2 ( t \circ_3 t ) )
+ t \circ_1 ( t \circ_3 ( t \circ_2 t ) )
+ t \circ_1 ( t \circ_3 ( t \circ_3 t ) ).
\end{align*}
Taking the difference, we obtain this (non-reduced) S-polynomial:
\begin{align*}
&
( t \circ_1 t ) \circ_2 ( t \circ_3 t )
+
( t \circ_2 t ) \circ_2 ( t \circ_3 t )
+
( t \circ_3 t ) \circ_2 ( t \circ_3 t )
\\
& {}
-
t \circ_1 ( t \circ_2 ( t \circ_3 t ) )
-
t \circ_1 ( t \circ_3 ( t \circ_2 t ) )
-
t \circ_1 ( t \circ_3 ( t \circ_3 t ) ).
\end{align*}
Terms 1 and 4 cancel, leaving
\begin{align*}
&
( t \circ_2 t ) \circ_2 ( t \circ_3 t )
+
( t \circ_3 t ) \circ_2 ( t \circ_3 t )
-
t \circ_1 ( t \circ_3 ( t \circ_2 t ) )
-
t \circ_1 ( t \circ_3 ( t \circ_3 t ) )
\\
&=
\adjustbox{valign=m}{
\begin{xy}
(  3,   0 )*{} = "root";
(  0,  -4 )*{} = "l";
(  3,  -4 )*{} = "m";
(  6,  -4 )*{} = "r";
{ \ar@{-} "root"; "l" };
{ \ar@{-} "root"; "m" };
{ \ar@{-} "root"; "r" };
(  0, -8 )*{} = "ml";
(  3, -8 )*{} = "mm";
(  6, -8 )*{} = "mr";
{ \ar@{-} "m"; "ml" };
{ \ar@{-} "m"; "mm" };
{ \ar@{-} "m"; "mr" };
( -3, -12 )*{} = "mll";
(  0, -12 )*{} = "mlm";
(  3, -12 )*{} = "mlr";
{ \ar@{-} "ml"; "mll" };
{ \ar@{-} "ml"; "mlm" };
{ \ar@{-} "ml"; "mlr" };
(  0, -16 )*{} = "mlrl";
(  3, -16 )*{} = "mlrm";
(  6, -16 )*{} = "mlrr";
{ \ar@{-} "mlr"; "mlrl" };
{ \ar@{-} "mlr"; "mlrm" };
{ \ar@{-} "mlr"; "mlrr" };
\end{xy}
}
\bigplus
\adjustbox{valign=m}{
\begin{xy}
(  0,   0 )*{} = "root";
( -6,  -4 )*{} = "l";
(  0,  -4 )*{} = "m";
(  9,  -4 )*{} = "r";
{ \ar@{-} "root"; "l" };
{ \ar@{-} "root"; "m" };
{ \ar@{-} "root"; "r" };
( -3, -8 )*{} = "ml";
(  0, -8 )*{} = "mm";
(  3, -8 )*{} = "mr";
{ \ar@{-} "m"; "ml" };
{ \ar@{-} "m"; "mm" };
{ \ar@{-} "m"; "mr" };
(  6, -8 )*{} = "rl";
(  9, -8 )*{} = "rm";
( 12, -8 )*{} = "rr";
{ \ar@{-} "r"; "rl" };
{ \ar@{-} "r"; "rm" };
{ \ar@{-} "r"; "rr" };
(  0, -12 )*{} = "mrl";
(  3, -12 )*{} = "mrm";
(  6, -12 )*{} = "mrr";
{ \ar@{-} "mr"; "mrl" };
{ \ar@{-} "mr"; "mrm" };
{ \ar@{-} "mr"; "mrr" };
\end{xy}
}
\bigminus
\adjustbox{valign=m}{
\begin{xy}
(  3,   0 )*{\bullet} = "root";
(  0,  -4 )*{} = "l";
(  3,  -4 )*{} = "m";
(  6,  -4 )*{} = "r";
{ \ar@{-} "root"; "l" };
{ \ar@{-} "root"; "m" };
{ \ar@{-} "root"; "r" };
( -3, -8 )*{} = "ll";
(  0, -8 )*{} = "lm";
(  3, -8 )*{} = "lr";
{ \ar@{-} "l"; "ll" };
{ \ar@{-} "l"; "lm" };
{ \ar@{-} "l"; "lr" };
(  0, -12 )*{} = "lrl";
(  3, -12 )*{} = "lrm";
(  6, -12 )*{} = "lrr";
{ \ar@{-} "lr"; "lrl" };
{ \ar@{-} "lr"; "lrm" };
{ \ar@{-} "lr"; "lrr" };
(  0, -16 )*{} = "lrml";
(  3, -16 )*{} = "lrmm";
(  6, -16 )*{} = "lrmr";
{ \ar@{-} "lrm"; "lrml" };
{ \ar@{-} "lrm"; "lrmm" };
{ \ar@{-} "lrm"; "lrmr" };
\end{xy}
}
\bigminus
\adjustbox{valign=m}{
\begin{xy}
(  3,   0 )*{} = "root";
(  0,  -4 )*{} = "l";
(  3,  -4 )*{} = "m";
(  6,  -4 )*{} = "r";
{ \ar@{-} "root"; "l" };
{ \ar@{-} "root"; "m" };
{ \ar@{-} "root"; "r" };
( -3, -8 )*{} = "ll";
(  0, -8 )*{} = "lm";
(  3, -8 )*{} = "lr";
{ \ar@{-} "l"; "ll" };
{ \ar@{-} "l"; "lm" };
{ \ar@{-} "l"; "lr" };
(  0, -12 )*{} = "lrl";
(  3, -12 )*{} = "lrm";
(  6, -12 )*{} = "lrr";
{ \ar@{-} "lr"; "lrl" };
{ \ar@{-} "lr"; "lrm" };
{ \ar@{-} "lr"; "lrr" };
(  3, -16 )*{} = "lrrl";
(  6, -16 )*{} = "lrrm";
(  9, -16 )*{} = "lrrr";
{ \ar@{-} "lrr"; "lrrl" };
{ \ar@{-} "lrr"; "lrrm" };
{ \ar@{-} "lrr"; "lrrr" };
\end{xy}
}
\\
&=
t \circ_2 ( ( t \circ_1 t ) \circ_3 t )
+
( t \circ_3 t ) \circ_2 ( t \circ_3 t )
-
( t \circ_1 t ) \circ_3 ( t \circ_2 t )
-
( t \circ_1 t ) \circ_3 ( t \circ_3 t ).
\end{align*}
Terms 1, 3, 4 contain the subtree $\ell m(\alpha) = t \circ_1 t$,
so we reduce them using $\alpha$:
\begin{align*}
& {}
- t \circ_2 ( ( t \circ_2 t ) \circ_3 t )
- t \circ_2 ( ( t \circ_3 t ) \circ_3 t )
+ ( t \circ_3 t ) \circ_2 ( t \circ_3 t )
+ ( t \circ_2 t ) \circ_3 ( t \circ_2 t )
\\
& {}
+ ( t \circ_3 t ) \circ_3 ( t \circ_2 t )
+ ( t \circ_2 t ) \circ_3 ( t \circ_3 t )
+ ( t \circ_3 t ) \circ_3 ( t \circ_3 t ).
\end{align*}
We write this polynomial in terms of trees:
\begin{align*}
&
\bigminus
\adjustbox{valign=m}{
\begin{xy}
(  3,   0 )*{} = "root";
(  0,  -4 )*{} = "l";
(  3,  -4 )*{} = "m";
(  6,  -4 )*{} = "r";
{ \ar@{-} "root"; "l" };
{ \ar@{-} "root"; "m" };
{ \ar@{-} "root"; "r" };
(  0, -8 )*{} = "ml";
(  3, -8 )*{} = "mm";
(  6, -8 )*{} = "mr";
{ \ar@{-} "m"; "ml" };
{ \ar@{-} "m"; "mm" };
{ \ar@{-} "m"; "mr" };
(  0, -12 )*{} = "mml";
(  3, -12 )*{} = "mmm";
(  6, -12 )*{} = "mmr";
{ \ar@{-} "mm"; "mml" };
{ \ar@{-} "mm"; "mmm" };
{ \ar@{-} "mm"; "mmr" };
(  0, -16 )*{} = "mmml";
(  3, -16 )*{} = "mmmm";
(  6, -16 )*{} = "mmmr";
{ \ar@{-} "mmm"; "mmml" };
{ \ar@{-} "mmm"; "mmmm" };
{ \ar@{-} "mmm"; "mmmr" };
\end{xy}
}
\bigminus
\adjustbox{valign=m}{
\begin{xy}
(  3,   0 )*{} = "root";
(  0,  -4 )*{} = "l";
(  3,  -4 )*{} = "m";
(  6,  -4 )*{} = "r";
{ \ar@{-} "root"; "l" };
{ \ar@{-} "root"; "m" };
{ \ar@{-} "root"; "r" };
(  0, -8 )*{} = "ml";
(  3, -8 )*{} = "mm";
(  6, -8 )*{} = "mr";
{ \ar@{-} "m"; "ml" };
{ \ar@{-} "m"; "mm" };
{ \ar@{-} "m"; "mr" };
(  3, -12 )*{} = "mrl";
(  6, -12 )*{} = "mrm";
(  9, -12 )*{} = "mrr";
{ \ar@{-} "mr"; "mrl" };
{ \ar@{-} "mr"; "mrm" };
{ \ar@{-} "mr"; "mrr" };
(  0, -16 )*{} = "mrll";
(  3, -16 )*{} = "mrlm";
(  6, -16 )*{} = "mrlr";
{ \ar@{-} "mrl"; "mrll" };
{ \ar@{-} "mrl"; "mrlm" };
{ \ar@{-} "mrl"; "mrlr" };
\end{xy}
}
\bigplus
\adjustbox{valign=m}{
\begin{xy}
(  0,   0 )*{} = "root";
( -9,  -4 )*{} = "l";
(  0,  -4 )*{} = "m";
(  9,  -4 )*{} = "r";
{ \ar@{-} "root"; "l" };
{ \ar@{-} "root"; "m" };
{ \ar@{-} "root"; "r" };
( -3, -8 )*{} = "ml";
(  0, -8 )*{} = "mm";
(  3, -8 )*{} = "mr";
{ \ar@{-} "m"; "ml" };
{ \ar@{-} "m"; "mm" };
{ \ar@{-} "m"; "mr" };
(  6, -8 )*{} = "rl";
(  9, -8 )*{} = "rm";
( 12, -8 )*{} = "rr";
{ \ar@{-} "r"; "rl" };
{ \ar@{-} "r"; "rm" };
{ \ar@{-} "r"; "rr" };
(  0, -12 )*{} = "mrl";
(  3, -12 )*{} = "mrm";
(  6, -12 )*{} = "mrr";
{ \ar@{-} "mr"; "mrl" };
{ \ar@{-} "mr"; "mrm" };
{ \ar@{-} "mr"; "mrr" };
\end{xy}
}
\bigplus
\adjustbox{valign=m}{
\begin{xy}
(  3,   0 )*{} = "root";
(  0,  -4 )*{} = "l";
(  3,  -4 )*{} = "m";
(  6,  -4 )*{} = "r";
{ \ar@{-} "root"; "l" };
{ \ar@{-} "root"; "m" };
{ \ar@{-} "root"; "r" };
(  0, -8 )*{} = "ml";
(  3, -8 )*{} = "mm";
(  6, -8 )*{} = "mr";
{ \ar@{-} "m"; "ml" };
{ \ar@{-} "m"; "mm" };
{ \ar@{-} "m"; "mr" };
(  0, -12 )*{} = "mml";
(  3, -12 )*{} = "mmm";
(  6, -12 )*{} = "mmr";
{ \ar@{-} "mm"; "mml" };
{ \ar@{-} "mm"; "mmm" };
{ \ar@{-} "mm"; "mmr" };
(  0, -16 )*{} = "mmml";
(  3, -16 )*{} = "mmmm";
(  6, -16 )*{} = "mmmr";
{ \ar@{-} "mmm"; "mmml" };
{ \ar@{-} "mmm"; "mmmm" };
{ \ar@{-} "mmm"; "mmmr" };
\end{xy}
}
\bigplus
\adjustbox{valign=m}{
\begin{xy}
(  3,   0 )*{} = "root";
(  0,  -4 )*{} = "l";
(  3,  -4 )*{} = "m";
(  6,  -4 )*{} = "r";
{ \ar@{-} "root"; "l" };
{ \ar@{-} "root"; "m" };
{ \ar@{-} "root"; "r" };
(  3, -8 )*{} = "rl";
(  6, -8 )*{} = "rm";
(  9, -8 )*{} = "rr";
{ \ar@{-} "r"; "rl" };
{ \ar@{-} "r"; "rm" };
{ \ar@{-} "r"; "rr" };
(  0, -12 )*{} = "rll";
(  3, -12 )*{} = "rlm";
(  6, -12 )*{} = "rlr";
{ \ar@{-} "rl"; "rll" };
{ \ar@{-} "rl"; "rlm" };
{ \ar@{-} "rl"; "rlr" };
(  0, -16 )*{} = "rlml";
(  3, -16 )*{} = "rlmm";
(  6, -16 )*{} = "rlmr";
{ \ar@{-} "rlm"; "rlml" };
{ \ar@{-} "rlm"; "rlmm" };
{ \ar@{-} "rlm"; "rlmr" };
\end{xy}
}
\bigplus
\adjustbox{valign=m}{
\begin{xy}
(  3,   0 )*{} = "root";
(  0,  -4 )*{} = "l";
(  3,  -4 )*{} = "m";
(  6,  -4 )*{} = "r";
{ \ar@{-} "root"; "l" };
{ \ar@{-} "root"; "m" };
{ \ar@{-} "root"; "r" };
(  0, -8 )*{} = "ml";
(  3, -8 )*{} = "mm";
(  6, -8 )*{} = "mr";
{ \ar@{-} "m"; "ml" };
{ \ar@{-} "m"; "mm" };
{ \ar@{-} "m"; "mr" };
(  0, -12 )*{} = "mml";
(  3, -12 )*{} = "mmm";
(  6, -12 )*{} = "mmr";
{ \ar@{-} "mm"; "mml" };
{ \ar@{-} "mm"; "mmm" };
{ \ar@{-} "mm"; "mmr" };
(  3, -16 )*{} = "mmrl";
(  6, -16 )*{} = "mmrm";
(  9, -16 )*{} = "mmrr";
{ \ar@{-} "mmr"; "mmrl" };
{ \ar@{-} "mmr"; "mmrm" };
{ \ar@{-} "mmr"; "mmrr" };
\end{xy}
}
\bigplus
\adjustbox{valign=m}{
\begin{xy}
(  3,   0 )*{} = "root";
(  0,  -4 )*{} = "l";
(  3,  -4 )*{} = "m";
(  6,  -4 )*{} = "r";
{ \ar@{-} "root"; "l" };
{ \ar@{-} "root"; "m" };
{ \ar@{-} "root"; "r" };
(  3, -8 )*{} = "rl";
(  6, -8 )*{} = "rm";
(  9, -8 )*{} = "rr";
{ \ar@{-} "r"; "rl" };
{ \ar@{-} "r"; "rm" };
{ \ar@{-} "r"; "rr" };
(  0, -12 )*{} = "rll";
(  3, -12 )*{} = "rlm";
(  6, -12 )*{} = "rlr";
{ \ar@{-} "rl"; "rll" };
{ \ar@{-} "rl"; "rlm" };
{ \ar@{-} "rl"; "rlr" };
(  3, -16 )*{} = "rlrl";
(  6, -16 )*{} = "rlrm";
(  9, -16 )*{} = "rlrr";
{ \ar@{-} "rlr"; "rlrl" };
{ \ar@{-} "rlr"; "rlrm" };
{ \ar@{-} "rlr"; "rlrr" };
\end{xy}
}
\end{align*}
Terms 1 and 4 cancel, leaving
\[
\bigminus
\adjustbox{valign=m}{
\begin{xy}
(  3,   0 )*{} = "root";
(  0,  -4 )*{} = "l";
(  3,  -4 )*{} = "m";
(  6,  -4 )*{} = "r";
{ \ar@{-} "root"; "l" };
{ \ar@{-} "root"; "m" };
{ \ar@{-} "root"; "r" };
(  0, -8 )*{} = "ml";
(  3, -8 )*{} = "mm";
(  6, -8 )*{} = "mr";
{ \ar@{-} "m"; "ml" };
{ \ar@{-} "m"; "mm" };
{ \ar@{-} "m"; "mr" };
(  3, -12 )*{} = "mrl";
(  6, -12 )*{} = "mrm";
(  9, -12 )*{} = "mrr";
{ \ar@{-} "mr"; "mrl" };
{ \ar@{-} "mr"; "mrm" };
{ \ar@{-} "mr"; "mrr" };
(  0, -16 )*{} = "mrll";
(  3, -16 )*{} = "mrlm";
(  6, -16 )*{} = "mrlr";
{ \ar@{-} "mrl"; "mrll" };
{ \ar@{-} "mrl"; "mrlm" };
{ \ar@{-} "mrl"; "mrlr" };
\end{xy}
}
\bigplus
\adjustbox{valign=m}{
\begin{xy}
(  0,   0 )*{} = "root";
( -9,  -4 )*{} = "l";
(  0,  -4 )*{} = "m";
(  9,  -4 )*{} = "r";
{ \ar@{-} "root"; "l" };
{ \ar@{-} "root"; "m" };
{ \ar@{-} "root"; "r" };
( -3, -8 )*{} = "ml";
(  0, -8 )*{} = "mm";
(  3, -8 )*{} = "mr";
{ \ar@{-} "m"; "ml" };
{ \ar@{-} "m"; "mm" };
{ \ar@{-} "m"; "mr" };
(  6, -8 )*{} = "rl";
(  9, -8 )*{} = "rm";
( 12, -8 )*{} = "rr";
{ \ar@{-} "r"; "rl" };
{ \ar@{-} "r"; "rm" };
{ \ar@{-} "r"; "rr" };
(  0, -12 )*{} = "mrl";
(  3, -12 )*{} = "mrm";
(  6, -12 )*{} = "mrr";
{ \ar@{-} "mr"; "mrl" };
{ \ar@{-} "mr"; "mrm" };
{ \ar@{-} "mr"; "mrr" };
\end{xy}
}
\bigplus
\adjustbox{valign=m}{
\begin{xy}
(  3,   0 )*{} = "root";
(  0,  -4 )*{} = "l";
(  3,  -4 )*{} = "m";
(  6,  -4 )*{} = "r";
{ \ar@{-} "root"; "l" };
{ \ar@{-} "root"; "m" };
{ \ar@{-} "root"; "r" };
(  3, -8 )*{} = "rl";
(  6, -8 )*{} = "rm";
(  9, -8 )*{} = "rr";
{ \ar@{-} "r"; "rl" };
{ \ar@{-} "r"; "rm" };
{ \ar@{-} "r"; "rr" };
(  0, -12 )*{} = "rll";
(  3, -12 )*{} = "rlm";
(  6, -12 )*{} = "rlr";
{ \ar@{-} "rl"; "rll" };
{ \ar@{-} "rl"; "rlm" };
{ \ar@{-} "rl"; "rlr" };
(  0, -16 )*{} = "rlml";
(  3, -16 )*{} = "rlmm";
(  6, -16 )*{} = "rlmr";
{ \ar@{-} "rlm"; "rlml" };
{ \ar@{-} "rlm"; "rlmm" };
{ \ar@{-} "rlm"; "rlmr" };
\end{xy}
}
\bigplus
\adjustbox{valign=m}{
\begin{xy}
(  3,   0 )*{\bullet} = "root";
(  0,  -4 )*{} = "l";
(  3,  -4 )*{} = "m";
(  6,  -4 )*{} = "r";
{ \ar@{-} "root"; "l" };
{ \ar@{-} "root"; "m" };
{ \ar@{-} "root"; "r" };
(  0, -8 )*{} = "ml";
(  3, -8 )*{} = "mm";
(  6, -8 )*{} = "mr";
{ \ar@{-} "m"; "ml" };
{ \ar@{-} "m"; "mm" };
{ \ar@{-} "m"; "mr" };
(  0, -12 )*{} = "mml";
(  3, -12 )*{} = "mmm";
(  6, -12 )*{} = "mmr";
{ \ar@{-} "mm"; "mml" };
{ \ar@{-} "mm"; "mmm" };
{ \ar@{-} "mm"; "mmr" };
(  3, -16 )*{} = "mmrl";
(  6, -16 )*{} = "mmrm";
(  9, -16 )*{} = "mmrr";
{ \ar@{-} "mmr"; "mmrl" };
{ \ar@{-} "mmr"; "mmrm" };
{ \ar@{-} "mmr"; "mmrr" };
\end{xy}
}
\bigplus
\adjustbox{valign=m}{
\begin{xy}
(  3,   0 )*{} = "root";
(  0,  -4 )*{} = "l";
(  3,  -4 )*{} = "m";
(  6,  -4 )*{} = "r";
{ \ar@{-} "root"; "l" };
{ \ar@{-} "root"; "m" };
{ \ar@{-} "root"; "r" };
(  3, -8 )*{} = "rl";
(  6, -8 )*{} = "rm";
(  9, -8 )*{} = "rr";
{ \ar@{-} "r"; "rl" };
{ \ar@{-} "r"; "rm" };
{ \ar@{-} "r"; "rr" };
(  0, -12 )*{} = "rll";
(  3, -12 )*{} = "rlm";
(  6, -12 )*{} = "rlr";
{ \ar@{-} "rl"; "rll" };
{ \ar@{-} "rl"; "rlm" };
{ \ar@{-} "rl"; "rlr" };
(  3, -16 )*{} = "rlrl";
(  6, -16 )*{} = "rlrm";
(  9, -16 )*{} = "rlrr";
{ \ar@{-} "rlr"; "rlrl" };
{ \ar@{-} "rlr"; "rlrm" };
{ \ar@{-} "rlr"; "rlrr" };
\end{xy}
}
\]
The leading monomial is divisible by $\ell m(\beta)$ but not $\ell m(\alpha)$;
we reduce it using $\beta$:
\[
\bigminus
\adjustbox{valign=m}{
\begin{xy}
(  3,   0 )*{\bullet} = "root";
(  0,  -4 )*{} = "l";
(  3,  -4 )*{} = "m";
(  6,  -4 )*{} = "r";
{ \ar@{-} "root"; "l" };
{ \ar@{-} "root"; "m" };
{ \ar@{-} "root"; "r" };
(  0, -8 )*{} = "ml";
(  3, -8 )*{} = "mm";
(  6, -8 )*{} = "mr";
{ \ar@{-} "m"; "ml" };
{ \ar@{-} "m"; "mm" };
{ \ar@{-} "m"; "mr" };
(  3, -12 )*{} = "mrl";
(  6, -12 )*{} = "mrm";
(  9, -12 )*{} = "mrr";
{ \ar@{-} "mr"; "mrl" };
{ \ar@{-} "mr"; "mrm" };
{ \ar@{-} "mr"; "mrr" };
(  0, -16 )*{} = "mrll";
(  3, -16 )*{} = "mrlm";
(  6, -16 )*{} = "mrlr";
{ \ar@{-} "mrl"; "mrll" };
{ \ar@{-} "mrl"; "mrlm" };
{ \ar@{-} "mrl"; "mrlr" };
\end{xy}
}
\bigplus
\adjustbox{valign=m}{
\begin{xy}
(  0,   0 )*{} = "root";
( -9,  -4 )*{} = "l";
(  0,  -4 )*{} = "m";
(  9,  -4 )*{} = "r";
{ \ar@{-} "root"; "l" };
{ \ar@{-} "root"; "m" };
{ \ar@{-} "root"; "r" };
( -3, -8 )*{} = "ml";
(  0, -8 )*{} = "mm";
(  3, -8 )*{} = "mr";
{ \ar@{-} "m"; "ml" };
{ \ar@{-} "m"; "mm" };
{ \ar@{-} "m"; "mr" };
(  6, -8 )*{} = "rl";
(  9, -8 )*{} = "rm";
( 12, -8 )*{} = "rr";
{ \ar@{-} "r"; "rl" };
{ \ar@{-} "r"; "rm" };
{ \ar@{-} "r"; "rr" };
(  0, -12 )*{} = "mrl";
(  3, -12 )*{} = "mrm";
(  6, -12 )*{} = "mrr";
{ \ar@{-} "mr"; "mrl" };
{ \ar@{-} "mr"; "mrm" };
{ \ar@{-} "mr"; "mrr" };
\end{xy}
}
\bigplus
\adjustbox{valign=m}{
\begin{xy}
(  3,   0 )*{} = "root";
(  0,  -4 )*{} = "l";
(  3,  -4 )*{} = "m";
(  6,  -4 )*{} = "r";
{ \ar@{-} "root"; "l" };
{ \ar@{-} "root"; "m" };
{ \ar@{-} "root"; "r" };
(  3, -8 )*{} = "rl";
(  6, -8 )*{} = "rm";
(  9, -8 )*{} = "rr";
{ \ar@{-} "r"; "rl" };
{ \ar@{-} "r"; "rm" };
{ \ar@{-} "r"; "rr" };
(  0, -12 )*{} = "rll";
(  3, -12 )*{} = "rlm";
(  6, -12 )*{} = "rlr";
{ \ar@{-} "rl"; "rll" };
{ \ar@{-} "rl"; "rlm" };
{ \ar@{-} "rl"; "rlr" };
(  0, -16 )*{} = "rlml";
(  3, -16 )*{} = "rlmm";
(  6, -16 )*{} = "rlmr";
{ \ar@{-} "rlm"; "rlml" };
{ \ar@{-} "rlm"; "rlmm" };
{ \ar@{-} "rlm"; "rlmr" };
\end{xy}
}
\bigminus
\adjustbox{valign=m}{
\begin{xy}
(  3,   0 )*{} = "root";
(  0,  -4 )*{} = "l";
(  3,  -4 )*{} = "m";
(  6,  -4 )*{} = "r";
{ \ar@{-} "root"; "l" };
{ \ar@{-} "root"; "m" };
{ \ar@{-} "root"; "r" };
(  0, -8 )*{} = "ml";
(  3, -8 )*{} = "mm";
(  6, -8 )*{} = "mr";
{ \ar@{-} "m"; "ml" };
{ \ar@{-} "m"; "mm" };
{ \ar@{-} "m"; "mr" };
(  3, -12 )*{} = "mrl";
(  6, -12 )*{} = "mrm";
(  9, -12 )*{} = "mrr";
{ \ar@{-} "mr"; "mrl" };
{ \ar@{-} "mr"; "mrm" };
{ \ar@{-} "mr"; "mrr" };
(  3, -16 )*{} = "mrml";
(  6, -16 )*{} = "mrmm";
(  9, -16 )*{} = "mrmr";
{ \ar@{-} "mrm"; "mrml" };
{ \ar@{-} "mrm"; "mrmm" };
{ \ar@{-} "mrm"; "mrmr" };
\end{xy}
}
\bigminus
\adjustbox{valign=m}{
\begin{xy}
(  3,   0 )*{} = "root";
(  0,  -4 )*{} = "l";
(  3,  -4 )*{} = "m";
(  6,  -4 )*{} = "r";
{ \ar@{-} "root"; "l" };
{ \ar@{-} "root"; "m" };
{ \ar@{-} "root"; "r" };
(  0, -8 )*{} = "ml";
(  3, -8 )*{} = "mm";
(  6, -8 )*{} = "mr";
{ \ar@{-} "m"; "ml" };
{ \ar@{-} "m"; "mm" };
{ \ar@{-} "m"; "mr" };
(  3, -12 )*{} = "mrl";
(  6, -12 )*{} = "mrm";
(  9, -12 )*{} = "mrr";
{ \ar@{-} "mr"; "mrl" };
{ \ar@{-} "mr"; "mrm" };
{ \ar@{-} "mr"; "mrr" };
(  6, -16 )*{} = "mrrl";
(  9, -16 )*{} = "mrrm";
( 12, -16 )*{} = "mrrr";
{ \ar@{-} "mrr"; "mrrl" };
{ \ar@{-} "mrr"; "mrrm" };
{ \ar@{-} "mrr"; "mrrr" };
\end{xy}
}
\bigplus
\adjustbox{valign=m}{
\begin{xy}
(  3,   0 )*{} = "root";
(  0,  -4 )*{} = "l";
(  3,  -4 )*{} = "m";
(  6,  -4 )*{} = "r";
{ \ar@{-} "root"; "l" };
{ \ar@{-} "root"; "m" };
{ \ar@{-} "root"; "r" };
(  3, -8 )*{} = "rl";
(  6, -8 )*{} = "rm";
(  9, -8 )*{} = "rr";
{ \ar@{-} "r"; "rl" };
{ \ar@{-} "r"; "rm" };
{ \ar@{-} "r"; "rr" };
(  0, -12 )*{} = "rll";
(  3, -12 )*{} = "rlm";
(  6, -12 )*{} = "rlr";
{ \ar@{-} "rl"; "rll" };
{ \ar@{-} "rl"; "rlm" };
{ \ar@{-} "rl"; "rlr" };
(  3, -16 )*{} = "rlrl";
(  6, -16 )*{} = "rlrm";
(  9, -16 )*{} = "rlrr";
{ \ar@{-} "rlr"; "rlrl" };
{ \ar@{-} "rlr"; "rlrm" };
{ \ar@{-} "rlr"; "rlrr" };
\end{xy}
}
\]
The leading monomial is divisible by $\alpha$ (bottom) and $\beta$ (top).
Using $\alpha$ gives
\[
\adjustbox{valign=m}{
\begin{xy}
(  3,   0 )*{} = "root";
(  0,  -4 )*{} = "l";
(  3,  -4 )*{} = "m";
(  6,  -4 )*{} = "r";
{ \ar@{-} "root"; "l" };
{ \ar@{-} "root"; "m" };
{ \ar@{-} "root"; "r" };
(  0, -8 )*{} = "ml";
(  3, -8 )*{} = "mm";
(  6, -8 )*{} = "mr";
{ \ar@{-} "m"; "ml" };
{ \ar@{-} "m"; "mm" };
{ \ar@{-} "m"; "mr" };
(  3, -12 )*{} = "mrl";
(  6, -12 )*{} = "mrm";
(  9, -12 )*{} = "mrr";
{ \ar@{-} "mr"; "mrl" };
{ \ar@{-} "mr"; "mrm" };
{ \ar@{-} "mr"; "mrr" };
(  3, -16 )*{} = "mrml";
(  6, -16 )*{} = "mrmm";
(  9, -16 )*{} = "mrmr";
{ \ar@{-} "mrm"; "mrml" };
{ \ar@{-} "mrm"; "mrmm" };
{ \ar@{-} "mrm"; "mrmr" };
\end{xy}
}
\bigplus
\adjustbox{valign=m}{
\begin{xy}
(  3,   0 )*{} = "root";
(  0,  -4 )*{} = "l";
(  3,  -4 )*{} = "m";
(  6,  -4 )*{} = "r";
{ \ar@{-} "root"; "l" };
{ \ar@{-} "root"; "m" };
{ \ar@{-} "root"; "r" };
(  0, -8 )*{} = "ml";
(  3, -8 )*{} = "mm";
(  6, -8 )*{} = "mr";
{ \ar@{-} "m"; "ml" };
{ \ar@{-} "m"; "mm" };
{ \ar@{-} "m"; "mr" };
(  3, -12 )*{} = "mrl";
(  6, -12 )*{} = "mrm";
(  9, -12 )*{} = "mrr";
{ \ar@{-} "mr"; "mrl" };
{ \ar@{-} "mr"; "mrm" };
{ \ar@{-} "mr"; "mrr" };
(  6, -16 )*{} = "mrrl";
(  9, -16 )*{} = "mrrm";
( 12, -16 )*{} = "mrrr";
{ \ar@{-} "mrr"; "mrrl" };
{ \ar@{-} "mrr"; "mrrm" };
{ \ar@{-} "mrr"; "mrrr" };
\end{xy}
}
\!\!\!\!\!\!
\bigplus
\adjustbox{valign=m}{
\begin{xy}
(  0,   0 )*{} = "root";
( -9,  -4 )*{} = "l";
(  0,  -4 )*{} = "m";
(  9,  -4 )*{} = "r";
{ \ar@{-} "root"; "l" };
{ \ar@{-} "root"; "m" };
{ \ar@{-} "root"; "r" };
( -3, -8 )*{} = "ml";
(  0, -8 )*{} = "mm";
(  3, -8 )*{} = "mr";
{ \ar@{-} "m"; "ml" };
{ \ar@{-} "m"; "mm" };
{ \ar@{-} "m"; "mr" };
(  6, -8 )*{} = "rl";
(  9, -8 )*{} = "rm";
( 12, -8 )*{} = "rr";
{ \ar@{-} "r"; "rl" };
{ \ar@{-} "r"; "rm" };
{ \ar@{-} "r"; "rr" };
(  0, -12 )*{} = "mrl";
(  3, -12 )*{} = "mrm";
(  6, -12 )*{} = "mrr";
{ \ar@{-} "mr"; "mrl" };
{ \ar@{-} "mr"; "mrm" };
{ \ar@{-} "mr"; "mrr" };
\end{xy}
}
\bigplus
\adjustbox{valign=m}{
\begin{xy}
(  3,   0 )*{} = "root";
(  0,  -4 )*{} = "l";
(  3,  -4 )*{} = "m";
(  6,  -4 )*{} = "r";
{ \ar@{-} "root"; "l" };
{ \ar@{-} "root"; "m" };
{ \ar@{-} "root"; "r" };
(  3, -8 )*{} = "rl";
(  6, -8 )*{} = "rm";
(  9, -8 )*{} = "rr";
{ \ar@{-} "r"; "rl" };
{ \ar@{-} "r"; "rm" };
{ \ar@{-} "r"; "rr" };
(  0, -12 )*{} = "rll";
(  3, -12 )*{} = "rlm";
(  6, -12 )*{} = "rlr";
{ \ar@{-} "rl"; "rll" };
{ \ar@{-} "rl"; "rlm" };
{ \ar@{-} "rl"; "rlr" };
(  0, -16 )*{} = "rlml";
(  3, -16 )*{} = "rlmm";
(  6, -16 )*{} = "rlmr";
{ \ar@{-} "rlm"; "rlml" };
{ \ar@{-} "rlm"; "rlmm" };
{ \ar@{-} "rlm"; "rlmr" };
\end{xy}
}
\bigminus
\adjustbox{valign=m}{
\begin{xy}
(  3,   0 )*{} = "root";
(  0,  -4 )*{} = "l";
(  3,  -4 )*{} = "m";
(  6,  -4 )*{} = "r";
{ \ar@{-} "root"; "l" };
{ \ar@{-} "root"; "m" };
{ \ar@{-} "root"; "r" };
(  0, -8 )*{} = "ml";
(  3, -8 )*{} = "mm";
(  6, -8 )*{} = "mr";
{ \ar@{-} "m"; "ml" };
{ \ar@{-} "m"; "mm" };
{ \ar@{-} "m"; "mr" };
(  3, -12 )*{} = "mrl";
(  6, -12 )*{} = "mrm";
(  9, -12 )*{} = "mrr";
{ \ar@{-} "mr"; "mrl" };
{ \ar@{-} "mr"; "mrm" };
{ \ar@{-} "mr"; "mrr" };
(  3, -16 )*{} = "mrml";
(  6, -16 )*{} = "mrmm";
(  9, -16 )*{} = "mrmr";
{ \ar@{-} "mrm"; "mrml" };
{ \ar@{-} "mrm"; "mrmm" };
{ \ar@{-} "mrm"; "mrmr" };
\end{xy}
}
\bigminus
\adjustbox{valign=m}{
\begin{xy}
(  3,   0 )*{} = "root";
(  0,  -4 )*{} = "l";
(  3,  -4 )*{} = "m";
(  6,  -4 )*{} = "r";
{ \ar@{-} "root"; "l" };
{ \ar@{-} "root"; "m" };
{ \ar@{-} "root"; "r" };
(  0, -8 )*{} = "ml";
(  3, -8 )*{} = "mm";
(  6, -8 )*{} = "mr";
{ \ar@{-} "m"; "ml" };
{ \ar@{-} "m"; "mm" };
{ \ar@{-} "m"; "mr" };
(  3, -12 )*{} = "mrl";
(  6, -12 )*{} = "mrm";
(  9, -12 )*{} = "mrr";
{ \ar@{-} "mr"; "mrl" };
{ \ar@{-} "mr"; "mrm" };
{ \ar@{-} "mr"; "mrr" };
(  6, -16 )*{} = "mrrl";
(  9, -16 )*{} = "mrrm";
( 12, -16 )*{} = "mrrr";
{ \ar@{-} "mrr"; "mrrl" };
{ \ar@{-} "mrr"; "mrrm" };
{ \ar@{-} "mrr"; "mrrr" };
\end{xy}
}
\!\!\!\!\!\!
\bigplus
\adjustbox{valign=m}{
\begin{xy}
(  3,   0 )*{} = "root";
(  0,  -4 )*{} = "l";
(  3,  -4 )*{} = "m";
(  6,  -4 )*{} = "r";
{ \ar@{-} "root"; "l" };
{ \ar@{-} "root"; "m" };
{ \ar@{-} "root"; "r" };
(  3, -8 )*{} = "rl";
(  6, -8 )*{} = "rm";
(  9, -8 )*{} = "rr";
{ \ar@{-} "r"; "rl" };
{ \ar@{-} "r"; "rm" };
{ \ar@{-} "r"; "rr" };
(  0, -12 )*{} = "rll";
(  3, -12 )*{} = "rlm";
(  6, -12 )*{} = "rlr";
{ \ar@{-} "rl"; "rll" };
{ \ar@{-} "rl"; "rlm" };
{ \ar@{-} "rl"; "rlr" };
(  3, -16 )*{} = "rlrl";
(  6, -16 )*{} = "rlrm";
(  9, -16 )*{} = "rlrr";
{ \ar@{-} "rlr"; "rlrl" };
{ \ar@{-} "rlr"; "rlrm" };
{ \ar@{-} "rlr"; "rlrr" };
\end{xy}
}
\]
Terms 1, 5 and terms 2, 6 cancel, leaving
\[
\adjustbox{valign=m}{
\begin{xy}
(  0,   0 )*{\bullet} = "root";
( -9,  -4 )*{} = "l";
(  0,  -4 )*{} = "m";
(  9,  -4 )*{} = "r";
{ \ar@{-} "root"; "l" };
{ \ar@{-} "root"; "m" };
{ \ar@{-} "root"; "r" };
( -3, -8 )*{} = "ml";
(  0, -8 )*{} = "mm";
(  3, -8 )*{} = "mr";
{ \ar@{-} "m"; "ml" };
{ \ar@{-} "m"; "mm" };
{ \ar@{-} "m"; "mr" };
(  6, -8 )*{} = "rl";
(  9, -8 )*{} = "rm";
( 12, -8 )*{} = "rr";
{ \ar@{-} "r"; "rl" };
{ \ar@{-} "r"; "rm" };
{ \ar@{-} "r"; "rr" };
(  0, -12 )*{} = "mrl";
(  3, -12 )*{} = "mrm";
(  6, -12 )*{} = "mrr";
{ \ar@{-} "mr"; "mrl" };
{ \ar@{-} "mr"; "mrm" };
{ \ar@{-} "mr"; "mrr" };
\end{xy}
}
\bigplus
\adjustbox{valign=m}{
\begin{xy}
(  3,   0 )*{} = "root";
(  0,  -4 )*{} = "l";
(  3,  -4 )*{} = "m";
(  6,  -4 )*{} = "r";
{ \ar@{-} "root"; "l" };
{ \ar@{-} "root"; "m" };
{ \ar@{-} "root"; "r" };
(  3, -8 )*{} = "rl";
(  6, -8 )*{} = "rm";
(  9, -8 )*{} = "rr";
{ \ar@{-} "r"; "rl" };
{ \ar@{-} "r"; "rm" };
{ \ar@{-} "r"; "rr" };
(  0, -12 )*{} = "rll";
(  3, -12 )*{} = "rlm";
(  6, -12 )*{} = "rlr";
{ \ar@{-} "rl"; "rll" };
{ \ar@{-} "rl"; "rlm" };
{ \ar@{-} "rl"; "rlr" };
(  0, -16 )*{} = "rlml";
(  3, -16 )*{} = "rlmm";
(  6, -16 )*{} = "rlmr";
{ \ar@{-} "rlm"; "rlml" };
{ \ar@{-} "rlm"; "rlmm" };
{ \ar@{-} "rlm"; "rlmr" };
\end{xy}
}
\bigplus
\adjustbox{valign=m}{
\begin{xy}
(  3,   0 )*{} = "root";
(  0,  -4 )*{} = "l";
(  3,  -4 )*{} = "m";
(  6,  -4 )*{} = "r";
{ \ar@{-} "root"; "l" };
{ \ar@{-} "root"; "m" };
{ \ar@{-} "root"; "r" };
(  3, -8 )*{} = "rl";
(  6, -8 )*{} = "rm";
(  9, -8 )*{} = "rr";
{ \ar@{-} "r"; "rl" };
{ \ar@{-} "r"; "rm" };
{ \ar@{-} "r"; "rr" };
(  0, -12 )*{} = "rll";
(  3, -12 )*{} = "rlm";
(  6, -12 )*{} = "rlr";
{ \ar@{-} "rl"; "rll" };
{ \ar@{-} "rl"; "rlm" };
{ \ar@{-} "rl"; "rlr" };
(  3, -16 )*{} = "rlrl";
(  6, -16 )*{} = "rlrm";
(  9, -16 )*{} = "rlrr";
{ \ar@{-} "rlr"; "rlrl" };
{ \ar@{-} "rlr"; "rlrm" };
{ \ar@{-} "rlr"; "rlrr" };
\end{xy}
}
\]
We reduce the leading monomial using $\beta$:
\[
\bigminus
\adjustbox{valign=m}{
\begin{xy}
(  0,   0 )*{} = "root";
( -6,  -4 )*{} = "l";
(  0,  -4 )*{} = "m";
(  6,  -4 )*{} = "r";
{ \ar@{-} "root"; "l" };
{ \ar@{-} "root"; "m" };
{ \ar@{-} "root"; "r" };
( -3,  -8 )*{} = "rl";
(  6,  -8 )*{} = "rm";
( 15,  -8 )*{} = "rr";
{ \ar@{-} "r"; "rl" };
{ \ar@{-} "r"; "rm" };
{ \ar@{-} "r"; "rr" };
(  3, -12 )*{} = "rml";
(  6, -12 )*{} = "rmm";
(  9, -12 )*{} = "rmr";
{ \ar@{-} "rm"; "rml" };
{ \ar@{-} "rm"; "rmm" };
{ \ar@{-} "rm"; "rmr" };
( 12, -12 )*{} = "rrl";
( 15, -12 )*{} = "rrm";
( 18, -12 )*{} = "rrr";
{ \ar@{-} "rr"; "rrl" };
{ \ar@{-} "rr"; "rrm" };
{ \ar@{-} "rr"; "rrr" };
\end{xy}
}
\bigminus
\adjustbox{valign=m}{
\begin{xy}
(  3,   0 )*{} = "root";
(  0,  -4 )*{} = "l";
(  3,  -4 )*{} = "m";
(  6,  -4 )*{} = "r";
{ \ar@{-} "root"; "l" };
{ \ar@{-} "root"; "m" };
{ \ar@{-} "root"; "r" };
( 3, -8 )*{} = "rl";
(  6, -8 )*{} = "rm";
(  9, -8 )*{} = "rr";
{ \ar@{-} "r"; "rl" };
{ \ar@{-} "r"; "rm" };
{ \ar@{-} "r"; "rr" };
( 6, -12 )*{} = "rrl";
(  9, -12 )*{} = "rrm";
(  12, -12 )*{} = "rrr";
{ \ar@{-} "rr"; "rrr" };
{ \ar@{-} "rr"; "rrm" };
{ \ar@{-} "rr"; "rrl" };
(  9, -16 )*{} = "rrml";
(  12, -16 )*{} = "rrmm";
(  15, -16 )*{} = "rrmr";
{ \ar@{-} "rrr"; "rrml" };
{ \ar@{-} "rrr"; "rrmm" };
{ \ar@{-} "rrr"; "rrmr" };
\end{xy}
}
\bigplus
\adjustbox{valign=m}{
\begin{xy}
(  3,   0 )*{\bullet} = "root";
(  0,  -4 )*{} = "l";
(  3,  -4 )*{} = "m";
(  6,  -4 )*{} = "r";
{ \ar@{-} "root"; "l" };
{ \ar@{-} "root"; "m" };
{ \ar@{-} "root"; "r" };
(  3, -8 )*{} = "rl";
(  6, -8 )*{} = "rm";
(  9, -8 )*{} = "rr";
{ \ar@{-} "r"; "rl" };
{ \ar@{-} "r"; "rm" };
{ \ar@{-} "r"; "rr" };
(  0, -12 )*{} = "rll";
(  3, -12 )*{} = "rlm";
(  6, -12 )*{} = "rlr";
{ \ar@{-} "rl"; "rll" };
{ \ar@{-} "rl"; "rlm" };
{ \ar@{-} "rl"; "rlr" };
(  0, -16 )*{} = "rlml";
(  3, -16 )*{} = "rlmm";
(  6, -16 )*{} = "rlmr";
{ \ar@{-} "rlm"; "rlml" };
{ \ar@{-} "rlm"; "rlmm" };
{ \ar@{-} "rlm"; "rlmr" };
\end{xy}
}
\bigplus
\adjustbox{valign=m}{
\begin{xy}
(  3,   0 )*{} = "root";
(  0,  -4 )*{} = "l";
(  3,  -4 )*{} = "m";
(  6,  -4 )*{} = "r";
{ \ar@{-} "root"; "l" };
{ \ar@{-} "root"; "m" };
{ \ar@{-} "root"; "r" };
(  3, -8 )*{} = "rl";
(  6, -8 )*{} = "rm";
(  9, -8 )*{} = "rr";
{ \ar@{-} "r"; "rl" };
{ \ar@{-} "r"; "rm" };
{ \ar@{-} "r"; "rr" };
(  0, -12 )*{} = "rll";
(  3, -12 )*{} = "rlm";
(  6, -12 )*{} = "rlr";
{ \ar@{-} "rl"; "rll" };
{ \ar@{-} "rl"; "rlm" };
{ \ar@{-} "rl"; "rlr" };
(  3, -16 )*{} = "rlrl";
(  6, -16 )*{} = "rlrm";
(  9, -16 )*{} = "rlrr";
{ \ar@{-} "rlr"; "rlrl" };
{ \ar@{-} "rlr"; "rlrm" };
{ \ar@{-} "rlr"; "rlrr" };
\end{xy}
}
\]
Terms 1, 2 cannot be reduced; terms 3, 4 can be reduced by $\alpha$:
\[
\bigminus
\scalebox{1.5}{2}
\adjustbox{valign=m}{
\begin{xy}
(  0,   0 )*{} = "root";
( -6,  -4 )*{} = "l";
(  0,  -4 )*{} = "m";
(  6,  -4 )*{} = "r";
{ \ar@{-} "root"; "l" };
{ \ar@{-} "root"; "m" };
{ \ar@{-} "root"; "r" };
( -3,  -8 )*{} = "rl";
(  6,  -8 )*{} = "rm";
( 15,  -8 )*{} = "rr";
{ \ar@{-} "r"; "rl" };
{ \ar@{-} "r"; "rm" };
{ \ar@{-} "r"; "rr" };
(  3, -12 )*{} = "rml";
(  6, -12 )*{} = "rmm";
(  9, -12 )*{} = "rmr";
{ \ar@{-} "rm"; "rml" };
{ \ar@{-} "rm"; "rmm" };
{ \ar@{-} "rm"; "rmr" };
( 12, -12 )*{} = "rrl";
( 15, -12 )*{} = "rrm";
( 18, -12 )*{} = "rrr";
{ \ar@{-} "rr"; "rrl" };
{ \ar@{-} "rr"; "rrm" };
{ \ar@{-} "rr"; "rrr" };
\end{xy}
}
\bigminus
\adjustbox{valign=m}{
\begin{xy}
(  3,   0 )*{} = "root";
(  0,  -4 )*{} = "l";
(  3,  -4 )*{} = "m";
(  6,  -4 )*{} = "r";
{ \ar@{-} "root"; "l" };
{ \ar@{-} "root"; "m" };
{ \ar@{-} "root"; "r" };
( 3, -8 )*{} = "rl";
(  6, -8 )*{} = "rm";
(  9, -8 )*{} = "rr";
{ \ar@{-} "r"; "rl" };
{ \ar@{-} "r"; "rm" };
{ \ar@{-} "r"; "rr" };
( 6, -12 )*{} = "rrl";
(  9, -12 )*{} = "rrm";
(  12, -12 )*{} = "rrr";
{ \ar@{-} "rr"; "rrr" };
{ \ar@{-} "rr"; "rrm" };
{ \ar@{-} "rr"; "rrl" };
(  9, -16 )*{} = "rrml";
(  12, -16 )*{} = "rrmm";
(  15, -16 )*{} = "rrmr";
{ \ar@{-} "rrr"; "rrml" };
{ \ar@{-} "rrr"; "rrmm" };
{ \ar@{-} "rrr"; "rrmr" };
\end{xy}
}
\bigminus
\adjustbox{valign=m}{
\begin{xy}
(  3,   0 )*{} = "root";
(  0,  -4 )*{} = "l";
(  3,  -4 )*{} = "m";
(  6,  -4 )*{} = "r";
{ \ar@{-} "root"; "l" };
{ \ar@{-} "root"; "m" };
{ \ar@{-} "root"; "r" };
(  3, -8 )*{} = "rl";
(  6, -8 )*{} = "rm";
(  9, -8 )*{} = "rr";
{ \ar@{-} "r"; "rl" };
{ \ar@{-} "r"; "rm" };
{ \ar@{-} "r"; "rr" };
( 3, -12 )*{} = "rml";
(  6, -12 )*{} = "rmm";
(  9, -12 )*{} = "rmr";
{ \ar@{-} "rm"; "rml" };
{ \ar@{-} "rm"; "rmm" };
{ \ar@{-} "rm"; "rmr" };
(  0, -16 )*{} = "rmll";
(  3, -16 )*{} = "rmlm";
(  6, -16 )*{} = "rmlr";
{ \ar@{-} "rml"; "rmll" };
{ \ar@{-} "rml"; "rmlm" };
{ \ar@{-} "rml"; "rmlr" };
\end{xy}
}
\bigminus
\adjustbox{valign=m}{
\begin{xy}
(  3,   0 )*{} = "root";
(  0,  -4 )*{} = "l";
(  3,  -4 )*{} = "m";
(  6,  -4 )*{} = "r";
{ \ar@{-} "root"; "l" };
{ \ar@{-} "root"; "m" };
{ \ar@{-} "root"; "r" };
(  3, -8 )*{} = "ml";
(  6, -8 )*{} = "mm";
(  9, -8 )*{} = "mr";
{ \ar@{-} "r"; "ml" };
{ \ar@{-} "r"; "mm" };
{ \ar@{-} "r"; "mr" };
( 3, -12 )*{} = "rml";
(  6, -12 )*{} = "rmm";
(  9, -12 )*{} = "rmr";
{ \ar@{-} "mm"; "rml" };
{ \ar@{-} "mm"; "rmm" };
{ \ar@{-} "mm"; "rmr" };
(  3, -16 )*{} = "rmrl";
(  6, -16 )*{} = "rmrm";
(  9, -16 )*{} = "rmrr";
{ \ar@{-} "rmm"; "rmrl" };
{ \ar@{-} "rmm"; "rmrm" };
{ \ar@{-} "rmm"; "rmrr" };
\end{xy}
}
\bigminus
\adjustbox{valign=m}{
\begin{xy}
(  3,   0 )*{} = "root";
(  0,  -4 )*{} = "l";
(  3,  -4 )*{} = "m";
(  6,  -4 )*{} = "r";
{ \ar@{-} "root"; "l" };
{ \ar@{-} "root"; "m" };
{ \ar@{-} "root"; "r" };
(  3, -8 )*{} = "rl";
(  6, -8 )*{} = "rm";
(  9, -8 )*{} = "rr";
{ \ar@{-} "r"; "rl" };
{ \ar@{-} "r"; "rm" };
{ \ar@{-} "r"; "rr" };
( 6, -12 )*{} = "rrl";
(  9, -12 )*{} = "rrm";
(  12, -12 )*{} = "rrr";
{ \ar@{-} "rr"; "rrl" };
{ \ar@{-} "rr"; "rrm" };
{ \ar@{-} "rr"; "rrr" };
(  3, -16 )*{} = "rrll";
(  6, -16 )*{} = "rrlm";
(  9, -16 )*{} = "rrlr";
{ \ar@{-} "rrl"; "rrll" };
{ \ar@{-} "rrl"; "rrlm" };
{ \ar@{-} "rrl"; "rrlr" };
\end{xy}
}
\]
We reduce terms 3, 5 by $\alpha$:
\[
\bigminus
\scalebox{1.5}{2}
\adjustbox{valign=m}{
\begin{xy}
(  0,   0 )*{} = "root";
( -6,  -4 )*{} = "l";
(  0,  -4 )*{} = "m";
(  6,  -4 )*{} = "r";
{ \ar@{-} "root"; "l" };
{ \ar@{-} "root"; "m" };
{ \ar@{-} "root"; "r" };
( -3,  -8 )*{} = "rl";
(  6,  -8 )*{} = "rm";
( 15,  -8 )*{} = "rr";
{ \ar@{-} "r"; "rl" };
{ \ar@{-} "r"; "rm" };
{ \ar@{-} "r"; "rr" };
(  3, -12 )*{} = "rml";
(  6, -12 )*{} = "rmm";
(  9, -12 )*{} = "rmr";
{ \ar@{-} "rm"; "rml" };
{ \ar@{-} "rm"; "rmm" };
{ \ar@{-} "rm"; "rmr" };
( 12, -12 )*{} = "rrl";
( 15, -12 )*{} = "rrm";
( 18, -12 )*{} = "rrr";
{ \ar@{-} "rr"; "rrl" };
{ \ar@{-} "rr"; "rrm" };
{ \ar@{-} "rr"; "rrr" };
\end{xy}
}
\bigplus
\adjustbox{valign=m}{
\begin{xy}
(  3,   0 )*{\bullet} = "root";
(  0,  -4 )*{} = "l";
(  3,  -4 )*{} = "m";
(  6,  -4 )*{} = "r";
{ \ar@{-} "root"; "l" };
{ \ar@{-} "root"; "m" };
{ \ar@{-} "root"; "r" };
(  3, -8 )*{} = "ml";
(  6, -8 )*{} = "mm";
(  9, -8 )*{} = "mr";
{ \ar@{-} "r"; "ml" };
{ \ar@{-} "r"; "mm" };
{ \ar@{-} "r"; "mr" };
( 3, -12 )*{} = "mrl";
(  6, -12 )*{} = "mrm";
(  9, -12 )*{} = "mrr";
{ \ar@{-} "mm"; "mrl" };
{ \ar@{-} "mm"; "mrm" };
{ \ar@{-} "mm"; "mrr" };
(  6, -16 )*{} = "mrrl";
(  9, -16 )*{} = "mrrm";
(  12, -16 )*{} = "mrrr";
{ \ar@{-} "mrr"; "mrrl" };
{ \ar@{-} "mrr"; "mrrm" };
{ \ar@{-} "mrr"; "mrrr" };
\end{xy}
}
\bigplus
\adjustbox{valign=m}{
\begin{xy}
(  3,   0 )*{} = "root";
(  0,  -4 )*{} = "l";
(  3,  -4 )*{} = "m";
(  6,  -4 )*{} = "r";
{ \ar@{-} "root"; "l" };
{ \ar@{-} "root"; "m" };
{ \ar@{-} "root"; "r" };
(  3, -8 )*{} = "rl";
(  6, -8 )*{} = "rm";
(  9, -8 )*{} = "rr";
{ \ar@{-} "r"; "rl" };
{ \ar@{-} "r"; "rm" };
{ \ar@{-} "r"; "rr" };
( 6, -12 )*{} = "rrl";
(  9, -12 )*{} = "rrm";
(  12, -12 )*{} = "rrr";
{ \ar@{-} "rr"; "rrl" };
{ \ar@{-} "rr"; "rrm" };
{ \ar@{-} "rr"; "rrr" };
(  6, -16 )*{} = "rrml";
(  9, -16 )*{} = "rrmm";
(  12, -16 )*{} = "rrmr";
{ \ar@{-} "rrm"; "rrml" };
{ \ar@{-} "rrm"; "rrmm" };
{ \ar@{-} "rrm"; "rrmr" };
\end{xy}
}
\]
If we reduce term 2 using $\beta$ then two terms cancel and we obtain $-\gamma$.
\end{proof}


\begin{lemma}
\label{degree9case2}
Identifying the first $t$ of $\ell m(\alpha) = t \circ_1 t$
and the first $t$ of $\ell m(\beta) = t \circ_2 (t  \circ_3  t)$
we obtain the S-polynomial $\delta$, and
$\{ \alpha, \beta, \delta \}$ is self-reduced:
\begin{align*}
\delta
&=
\adjustbox{valign=m}{
\begin{xy}
(  3,   0 )*{\bullet} = "root";
(  -3,  -4 )*{} = "l";
(  3,  -4 )*{} = "m";
(  12,  -4 )*{} = "r";
{ \ar@{-} "root"; "l" };
{ \ar@{-} "root"; "m" };
{ \ar@{-} "root"; "r" };
( 0, -8 )*{} = "ll";
(  3, -8 )*{} = "lm";
(  6, -8 )*{} = "lr";
{ \ar@{-} "m"; "ll" };
{ \ar@{-} "m"; "lm" };
{ \ar@{-} "m"; "lr" };
( 9, -8 )*{} = "ml";
(  12, -8 )*{} = "mm";
(  15, -8 )*{} = "mr";
{ \ar@{-} "r"; "mr" };
{ \ar@{-} "r"; "mm" };
{ \ar@{-} "r"; "ml" };
(  9, -12 )*{} = "mll";
(  12, -12 )*{} = "mlm";
(  15, -12 )*{} = "mlr";
{ \ar@{-} "mm"; "mll" };
{ \ar@{-} "mm"; "mlm" };
{ \ar@{-} "mm"; "mlr" };
\end{xy}
}
\bigplus
\adjustbox{valign=m}{
\begin{xy}
(  3,   0 )*{} = "root";
(  -3,  -4 )*{} = "l";
(  3,  -4 )*{} = "m";
(  12,  -4 )*{} = "r";
{ \ar@{-} "root"; "l" };
{ \ar@{-} "root"; "m" };
{ \ar@{-} "root"; "r" };
( 0, -8 )*{} = "ml";
(  3, -8 )*{} = "mm";
(  6, -8 )*{} = "mr";
{ \ar@{-} "m"; "ml" };
{ \ar@{-} "m"; "mm" };
{ \ar@{-} "m"; "mr" };
( 9, -8 )*{} = "ml";
(  12, -8 )*{} = "mm";
(  15, -8 )*{} = "mr";
{ \ar@{-} "r"; "mr" };
{ \ar@{-} "r"; "mm" };
{ \ar@{-} "r"; "ml" };
(  12, -12 )*{} = "mll";
(  15, -12 )*{} = "mlm";
(  18, -12 )*{} = "mlr";
{ \ar@{-} "mr"; "mll" };
{ \ar@{-} "mr"; "mlm" };
{ \ar@{-} "mr"; "mlr" };
\end{xy}
}
\bigplus
\adjustbox{valign=m}{
\begin{xy}
(  3,   0 )*{} = "root";
(  0,  -4 )*{} = "l";
(  3,  -4 )*{} = "m";
(  6,  -4 )*{} = "r";
{ \ar@{-} "root"; "l" };
{ \ar@{-} "root"; "m" };
{ \ar@{-} "root"; "r" };
( 3, -8 )*{} = "rl";
(  6, -8 )*{} = "rm";
(  9, -8 )*{} = "rr";
{ \ar@{-} "r"; "rl" };
{ \ar@{-} "r"; "rm" };
{ \ar@{-} "r"; "rr" };
( 6, -12 )*{} = "rrl";
(  9, -12 )*{} = "rrm";
(  12, -12 )*{} = "rrr";
{ \ar@{-} "rr"; "rrr" };
{ \ar@{-} "rr"; "rrm" };
{ \ar@{-} "rr"; "rrl" };
(  9, -16 )*{} = "rrml";
(  12, -16 )*{} = "rrmm";
(  15, -16 )*{} = "rrmr";
{ \ar@{-} "rrr"; "rrml" };
{ \ar@{-} "rrr"; "rrmm" };
{ \ar@{-} "rrr"; "rrmr" };
\end{xy}
}
\\
&=
(t \circ_3  (t \circ_2 t )) \circ_2 t )
+
(t \circ_3  (t \circ_3 t )) \circ_2 t )
+
(t \circ_3  (t \circ_3 (t \circ_3 t ) ).
\end{align*}
\end{lemma}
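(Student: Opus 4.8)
The plan is to follow the same three-stage pattern as in Lemma~\ref{degree9case1}: construct the small common multiple, form and simplify the $S$-polynomial, and finally check that $\{\alpha,\beta,\delta\}$ is self-reduced.

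First I would identify the SCM. Overlapping the root $t$ of $\ell m(\alpha)=t\circ_1 t$ with the root $t$ of $\ell m(\beta)=t\circ_2(t\circ_3 t)$ forces the common root to have its first child internal (coming from $\alpha$) and its second child equal to the pattern $t\circ_3 t$ (coming from $\beta$), with the third child a leaf. The resulting tree $f$ of arity $9$ satisfies $\ell m(f)=(t\circ_1 t)\circ_4(t\circ_3 t)=(t\circ_2(t\circ_3 t))\circ_1 t$; both expressions have path sequence $(2,2,2,2,2,3,3,3,1)$, and since a tree is determined by its path sequence these are the same tree. The four SCM conditions are then immediate, in particular $\ell(f)=9<5+7=\ell(\ell m(\alpha))+\ell(\ell m(\beta))$. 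Because $f$ is obtained from $\ell m(\alpha)$ by grafting $t\circ_3 t$ at leaf $4$ and from $\ell m(\beta)$ by grafting $t$ at leaf $1$, the two liftings are $M(\ell m(f),\ell m(\alpha),\alpha)=\alpha\circ_4(t\circ_3 t)$ and $M(\ell m(f),\ell m(\beta),\beta)=\beta\circ_1 t$, so the $S$-polynomial is $S=\alpha\circ_4(t\circ_3 t)-\beta\circ_1 t$, whose two leading monomials (each equal to $\ell m(f)$) cancel.

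Next I would expand and reduce. After the leading terms cancel, one is left with
\[
S=(t\circ_2 t)\circ_4(t\circ_3 t)+(t\circ_3 t)\circ_4(t\circ_3 t)-(t\circ_3(t\circ_2 t))\circ_1 t-(t\circ_3(t\circ_3 t))\circ_1 t.
\]
I would rewrite each partial composition into canonical tree form using Lemma~\ref{pcrules}, then repeatedly apply the rewrite rules \eqref{frr} and \eqref{mybeta2} to every monomial whose leading subtree is $\ell m(\alpha)=t\circ_1 t$ or $\ell m(\beta)=t\circ_2(t\circ_3 t)$, exactly as in the proof of Lemma~\ref{degree9case1}, recording at each stage which terms cancel and which combine. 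The reduction terminates at a monic normal form, which (up to the overall sign absorbed by taking the monic representative) is $\delta$, with leading monomial $(t\circ_3(t\circ_2 t))\circ_2 t$ of path $(1,2,2,2,2,3,3,3,2)$ and the two trailing monomials $(t\circ_3(t\circ_3 t))\circ_2 t$ and $t\circ_3(t\circ_3(t\circ_3 t))$, both already in normal form. For the self-reducedness claim I would compare leading monomials: $\ell m(\delta)$ has no internal node whose first child is again internal, so $\ell m(\alpha)$ does not divide it, and it has no internal node whose second child equals $t\circ_3 t$, so $\ell m(\beta)$ does not divide it; divisibility in the reverse directions fails on arity grounds ($5,7<9$), and $\ell m(\alpha)\nmid\ell m(\beta)$ was noted when $\beta$ was introduced. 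Hence no leading monomial divides another.

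The only real obstacle is the bookkeeping in the reduction step: keeping the partial-composition indices correct under Lemma~\ref{pcrules}, and correctly recognizing, at each stage, which subtree is the leading reducible one. As in Lemma~\ref{degree9case1}, several intermediate monomials are divisible by both $\ell m(\alpha)$ and $\ell m(\beta)$ because the two patterns can overlap at different nodes; to guarantee a well-defined answer I would always reduce the current leading monomial first, as prescribed by \cite[Algorithm 3.4.2.16]{BD}. The long intermediate expression collapses through a sequence of pairwise cancellations down to the three terms of $\delta$, and it is precisely these cancellations where sign and coefficient errors are most likely, so I would verify each one by comparing path sequences of the trees involved.
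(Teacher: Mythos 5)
Your proposal follows the paper's proof essentially step for step: the same SCM $(t\circ_1 t)\circ_4(t\circ_3 t)=(t\circ_2(t\circ_3 t))\circ_1 t$, the same S-polynomial $\alpha\circ_4(t\circ_3 t)-\beta\circ_1 t$ whose leading terms cancel to leave the same four-term expression, the same reduction by $\alpha$ and $\beta$ terminating in $\delta$ with leading monomial $(t\circ_3(t\circ_2 t))\circ_2 t$, and a self-reducedness check that is in fact more explicit than the paper's. The only difference is that the paper carries out the intermediate reduction in full (reducing terms 3 and 4 by $\alpha$, then the resulting leading term by $\beta$, after which terms cancel pairwise), whereas you describe this bookkeeping without executing it; since your setup and endpoint agree with the paper's, the plan is sound as stated.
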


\begin{proof}
We have the equations
\[
\ell m(\alpha) \circ_4 ( t \circ_3 t )
=
(t \circ_1  t) \circ_4 ( t \circ_3 t )
=
\adjustbox{valign=m}{
\begin{xy}
(   3,   0 )*{} = "root";
(  -6,  -4 )*{} = "l";
(   3,  -4 )*{} = "m";
(   9,  -4 )*{} = "r";
{ \ar@{-} "root"; "l" };
{ \ar@{-} "root"; "m" };
{ \ar@{-} "root"; "r" };
( -9, -8 )*{} = "ll";
( -6, -8 )*{} = "lm";
( -3, -8 )*{} = "lr";
{ \ar@{-} "l"; "ll" };
{ \ar@{-} "l"; "lm" };
{ \ar@{-} "l"; "lr" };
( 0, -8 )*{} = "ml";
(  3, -8 )*{} = "mm";
(  6, -8 )*{} = "mr";
{ \ar@{-} "m"; "mr" };
{ \ar@{-} "m"; "mm" };
{ \ar@{-} "m"; "ml" };
(  3, -12 )*{} = "mll";
(  6, -12 )*{} = "mlm";
(  9, -12 )*{} = "mlr";
{ \ar@{-} "mr"; "mll" };
{ \ar@{-} "mr"; "mlm" };
{ \ar@{-} "mr"; "mlr" };
\end{xy}
}
=
( t \circ_2 (t  \circ_3  t)) \circ_1 t
=
\ell m(\beta) \circ_1 t.
\]
We apply the same partial compositions to $\alpha$ and $\beta$:
\begin{align*}
\alpha \circ_4 ( t \circ_3 t )
&=
  ( t \circ_1 t ) \circ_4 ( t \circ_3 t )
+ ( t \circ_2 t ) \circ_4 ( t \circ_3 t )
+ ( t \circ_3 t ) \circ_4 ( t \circ_3 t ),
\\
\beta  \circ_1 t
&=
   ( t \circ_2 ( t \circ_3 t ) ) \circ_1 t
+  ( t \circ_3 ( t \circ_2 t ) ) \circ_1 t
+  ( t \circ_3 ( t \circ_3 t ) ) \circ_1 t.
\end{align*}
The resulting S-polynomial is
\begin{align*}
&
( t \circ_1 t ) \circ_4 ( t \circ_3 t )
+ ( t \circ_2 t ) \circ_4 ( t \circ_3 t )
+ ( t \circ_3 t ) \circ_4 ( t \circ_3 t )
\\
 \;
&
 - ( t \circ_2 ( t \circ_3 t ) ) \circ_1 t
 - ( t \circ_3 ( t \circ_2 t ) ) \circ_1 t
 - ( t \circ_3 ( t \circ_3 t ) ) \circ_1 t.
\end{align*}
Terms 1, 4 cancel, leaving
\begin{align*}
&
( t \circ_2 t ) \circ_4 ( t \circ_3 t )
+ ( t \circ_3 t ) \circ_4 ( t \circ_3 t )
- ( t \circ_3 ( t \circ_2 t ) ) \circ_1 t
- ( t \circ_3 ( t \circ_3 t ) ) \circ_1 t
\\
&=
\adjustbox{valign=m}{
\begin{xy}
(  3,   0 )*{} = "root";
(  0,  -4 )*{} = "l";
(  3,  -4 )*{} = "m";
(  6,  -4 )*{} = "r";
{ \ar@{-} "root"; "l" };
{ \ar@{-} "root"; "m" };
{ \ar@{-} "root"; "r" };
(  0, -8 )*{} = "ml";
(  3, -8 )*{} = "mm";
(  6, -8 )*{} = "mr";
{ \ar@{-} "m"; "ml" };
{ \ar@{-} "m"; "mm" };
{ \ar@{-} "m"; "mr" };
( 3, -12 )*{} = "mrl";
(  6, -12 )*{} = "mrm";
(  9, -12 )*{} = "mrr";
{ \ar@{-} "mr"; "mrl" };
{ \ar@{-} "mr"; "mrm" };
{ \ar@{-} "mr"; "mrr" };
(  6, -16 )*{} = "mrrl";
(  9, -16 )*{} = "mrrm";
(  12, -16 )*{} = "mrrr";
{ \ar@{-} "mrr"; "mrrl" };
{ \ar@{-} "mrr"; "mrrm" };
{ \ar@{-} "mrr"; "mrrr" };
\end{xy}
}
\bigplus
\adjustbox{valign=m}{
\begin{xy}
(  3,   0 )*{} = "root";
(  0,  -4 )*{} = "l";
(  3,  -4 )*{} = "m";
(  6,  -4 )*{} = "r";
{ \ar@{-} "root"; "l" };
{ \ar@{-} "root"; "m" };
{ \ar@{-} "root"; "r" };
(  3, -8 )*{} = "ml";
(  6, -8 )*{} = "mm";
(  9, -8 )*{} = "mr";
{ \ar@{-} "r"; "ml" };
{ \ar@{-} "r"; "mm" };
{ \ar@{-} "r"; "mr" };
( 3, -12 )*{} = "mrl";
(  6, -12 )*{} = "mrm";
(  9, -12 )*{} = "mrr";
{ \ar@{-} "mm"; "mrl" };
{ \ar@{-} "mm"; "mrm" };
{ \ar@{-} "mm"; "mrr" };
(  6, -16 )*{} = "mrrl";
(  9, -16 )*{} = "mrrm";
(  12, -16 )*{} = "mrrr";
{ \ar@{-} "mrr"; "mrrl" };
{ \ar@{-} "mrr"; "mrrm" };
{ \ar@{-} "mrr"; "mrrr" };
\end{xy}
}
\bigminus
\adjustbox{valign=m}{
\begin{xy}
(  3,   0 )*{\bullet} = "root";
( -3,  -4 )*{} = "l";
(  3,  -4 )*{} = "m";
(  9,  -4 )*{} = "r";
{ \ar@{-} "root"; "l" };
{ \ar@{-} "root"; "m" };
{ \ar@{-} "root"; "r" };
( -6, -8 )*{} = "ll";
( -3, -8 )*{} = "lm";
(  0, -8 )*{} = "lr";
{ \ar@{-} "l"; "ll" };
{ \ar@{-} "l"; "lm" };
{ \ar@{-} "l"; "lr" };
( 6, -8 )*{} = "ml";
(  9, -8 )*{} = "mm";
(  12, -8 )*{} = "mr";
{ \ar@{-} "r"; "mr" };
{ \ar@{-} "r"; "mm" };
{ \ar@{-} "r"; "ml" };
(  6, -12 )*{} = "mll";
(  9, -12 )*{} = "mlm";
(  12, -12 )*{} = "mlr";
{ \ar@{-} "mm"; "mll" };
{ \ar@{-} "mm"; "mlm" };
{ \ar@{-} "mm"; "mlr" };
\end{xy}
}
\bigminus
\adjustbox{valign=m}{
\begin{xy}
(  3,   0 )*{} = "root";
( -3,  -4 )*{} = "l";
(  3,  -4 )*{} = "m";
(  9,  -4 )*{} = "r";
{ \ar@{-} "root"; "l" };
{ \ar@{-} "root"; "m" };
{ \ar@{-} "root"; "r" };
( -6, -8 )*{} = "ll";
( -3, -8 )*{} = "lm";
(  0, -8 )*{} = "lr";
{ \ar@{-} "l"; "ll" };
{ \ar@{-} "l"; "lm" };
{ \ar@{-} "l"; "lr" };
( 6, -8 )*{} = "ml";
(  9, -8 )*{} = "mm";
(  12, -8 )*{} = "mr";
{ \ar@{-} "r"; "mr" };
{ \ar@{-} "r"; "mm" };
{ \ar@{-} "r"; "ml" };
(  9, -12 )*{} = "mll";
(  12, -12 )*{} = "mlm";
(  15, -12 )*{} = "mlr";
{ \ar@{-} "mr"; "mll" };
{ \ar@{-} "mr"; "mlm" };
{ \ar@{-} "mr"; "mlr" };
\end{xy}
}
\end{align*}
We reduce terms 3, 4 using $\alpha$:
\begin{align*}
&
\adjustbox{valign=m}{
\begin{xy}
(  3,   0 )*{\bullet} = "root";
(  0,  -4 )*{} = "l";
(  3,  -4 )*{} = "m";
(  6,  -4 )*{} = "r";
{ \ar@{-} "root"; "l" };
{ \ar@{-} "root"; "m" };
{ \ar@{-} "root"; "r" };
(  0, -8 )*{} = "ml";
(  3, -8 )*{} = "mm";
(  6, -8 )*{} = "mr";
{ \ar@{-} "m"; "ml" };
{ \ar@{-} "m"; "mm" };
{ \ar@{-} "m"; "mr" };
( 3, -12 )*{} = "mrl";
(  6, -12 )*{} = "mrm";
(  9, -12 )*{} = "mrr";
{ \ar@{-} "mr"; "mrl" };
{ \ar@{-} "mr"; "mrm" };
{ \ar@{-} "mr"; "mrr" };
(  6, -16 )*{} = "mrrl";
(  9, -16 )*{} = "mrrm";
(  12, -16 )*{} = "mrrr";
{ \ar@{-} "mrr"; "mrrl" };
{ \ar@{-} "mrr"; "mrrm" };
{ \ar@{-} "mrr"; "mrrr" };
\end{xy}
}
\!\!\!\!\!\! \bigplus
\adjustbox{valign=m}{
\begin{xy}
(  3,   0 )*{} = "root";
(  0,  -4 )*{} = "l";
(  3,  -4 )*{} = "m";
(  6,  -4 )*{} = "r";
{ \ar@{-} "root"; "l" };
{ \ar@{-} "root"; "m" };
{ \ar@{-} "root"; "r" };
(  3, -8 )*{} = "ml";
(  6, -8 )*{} = "mm";
(  9, -8 )*{} = "mr";
{ \ar@{-} "r"; "ml" };
{ \ar@{-} "r"; "mm" };
{ \ar@{-} "r"; "mr" };
( 3, -12 )*{} = "mrl";
(  6, -12 )*{} = "mrm";
(  9, -12 )*{} = "mrr";
{ \ar@{-} "mm"; "mrl" };
{ \ar@{-} "mm"; "mrm" };
{ \ar@{-} "mm"; "mrr" };
(  6, -16 )*{} = "mrrl";
(  9, -16 )*{} = "mrrm";
(  12, -16 )*{} = "mrrr";
{ \ar@{-} "mrr"; "mrrl" };
{ \ar@{-} "mrr"; "mrrm" };
{ \ar@{-} "mrr"; "mrrr" };
\end{xy}
}
\!\!\!\! \bigplus
\adjustbox{valign=m}{
\begin{xy}
(  3,   0 )*{} = "root";
(  -3,  -4 )*{} = "l";
(  3,  -4 )*{} = "m";
(  12,  -4 )*{} = "r";
{ \ar@{-} "root"; "l" };
{ \ar@{-} "root"; "m" };
{ \ar@{-} "root"; "r" };
( 0, -8 )*{} = "ll";
(  3, -8 )*{} = "lm";
(  6, -8 )*{} = "lr";
{ \ar@{-} "m"; "ll" };
{ \ar@{-} "m"; "lm" };
{ \ar@{-} "m"; "lr" };
( 9, -8 )*{} = "ml";
(  12, -8 )*{} = "mm";
(  15, -8 )*{} = "mr";
{ \ar@{-} "r"; "mr" };
{ \ar@{-} "r"; "mm" };
{ \ar@{-} "r"; "ml" };
(  9, -12 )*{} = "mll";
(  12, -12 )*{} = "mlm";
(  15, -12 )*{} = "mlr";
{ \ar@{-} "mm"; "mll" };
{ \ar@{-} "mm"; "mlm" };
{ \ar@{-} "mm"; "mlr" };
\end{xy}
}
\bigplus \!
\adjustbox{valign=m}{
\begin{xy}
(  3,   0 )*{} = "root";
(  -3,  -4 )*{} = "l";
(  3,  -4 )*{} = "m";
(  9,  -4 )*{} = "r";
{ \ar@{-} "root"; "l" };
{ \ar@{-} "root"; "m" };
{ \ar@{-} "root"; "r" };
( 6, -8 )*{} = "rl";
(  9, -8 )*{} = "rm";
(  12, -8 )*{} = "rr";
{ \ar@{-} "r"; "rl" };
{ \ar@{-} "r"; "rm" };
{ \ar@{-} "r"; "rr" };
( 9, -12 )*{} = "rrr";
(  12, -12 )*{} = "rrm";
(  15, -12 )*{} = "rrl";
{ \ar@{-} "rr"; "rrr" };
{ \ar@{-} "rr"; "rrm" };
{ \ar@{-} "rr"; "rrl" };
(  9, -16 )*{} = "rrml";
(  12, -16 )*{} = "rrmm";
(  15, -16 )*{} = "rrmr";
{ \ar@{-} "rrm"; "rrml" };
{ \ar@{-} "rrm"; "rrmm" };
{ \ar@{-} "rrm"; "rrmr" };
\end{xy}
}
\!\! \bigplus
\adjustbox{valign=m}{
\begin{xy}
(  3,   0 )*{} = "root";
(  -3,  -4 )*{} = "l";
(  3,  -4 )*{} = "m";
(  12,  -4 )*{} = "r";
{ \ar@{-} "root"; "l" };
{ \ar@{-} "root"; "m" };
{ \ar@{-} "root"; "r" };
( 0, -8 )*{} = "ml";
(  3, -8 )*{} = "mm";
(  6, -8 )*{} = "mr";
{ \ar@{-} "m"; "ml" };
{ \ar@{-} "m"; "mm" };
{ \ar@{-} "m"; "mr" };
( 9, -8 )*{} = "ml";
(  12, -8 )*{} = "mm";
(  15, -8 )*{} = "mr";
{ \ar@{-} "r"; "mr" };
{ \ar@{-} "r"; "mm" };
{ \ar@{-} "r"; "ml" };
(  12, -12 )*{} = "mll";
(  15, -12 )*{} = "mlm";
(  18, -12 )*{} = "mlr";
{ \ar@{-} "mr"; "mll" };
{ \ar@{-} "mr"; "mlm" };
{ \ar@{-} "mr"; "mlr" };
\end{xy}
}
\!\!\!\!\! \bigplus
\adjustbox{valign=m}{
\begin{xy}
(  3,   0 )*{} = "root";
(  0,  -4 )*{} = "l";
(  3,  -4 )*{} = "m";
(  6,  -4 )*{} = "r";
{ \ar@{-} "root"; "l" };
{ \ar@{-} "root"; "m" };
{ \ar@{-} "root"; "r" };
( 3, -8 )*{} = "rl";
(  6, -8 )*{} = "rm";
(  9, -8 )*{} = "rr";
{ \ar@{-} "r"; "rl" };
{ \ar@{-} "r"; "rm" };
{ \ar@{-} "r"; "rr" };
( 6, -12 )*{} = "rrl";
(  9, -12 )*{} = "rrm";
(  12, -12 )*{} = "rrr";
{ \ar@{-} "rr"; "rrr" };
{ \ar@{-} "rr"; "rrm" };
{ \ar@{-} "rr"; "rrl" };
(  9, -16 )*{} = "rrml";
(  12, -16 )*{} = "rrmm";
(  15, -16 )*{} = "rrmr";
{ \ar@{-} "rrr"; "rrml" };
{ \ar@{-} "rrr"; "rrmm" };
{ \ar@{-} "rrr"; "rrmr" };
\end{xy}
}
\end{align*}
Reducing term 1 using $\beta$ gives
\begin{align*}
& {}
\bigminus
\adjustbox{valign=m}{
\begin{xy}
(  3,   0 )*{} = "root";
(  0,  -4 )*{} = "l";
(  3,  -4 )*{} = "m";
(  6,  -4 )*{} = "r";
{ \ar@{-} "root"; "l" };
{ \ar@{-} "root"; "m" };
{ \ar@{-} "root"; "r" };
(  3, -8 )*{} = "rl";
(  6, -8 )*{} = "rm";
(  9, -8 )*{} = "rr";
{ \ar@{-} "r"; "rl" };
{ \ar@{-} "r"; "rm" };
{ \ar@{-} "r"; "rr" };
( 3, -12 )*{} = "mrl";
(  6, -12 )*{} = "mrm";
(  9, -12 )*{} = "mrr";
{ \ar@{-} "rm"; "mrl" };
{ \ar@{-} "rm"; "mrm" };
{ \ar@{-} "rm"; "mrr" };
(  6, -16 )*{} = "mrrl";
(  9, -16 )*{} = "mrrm";
(  12, -16 )*{} = "mrrr";
{ \ar@{-} "mrr"; "mrrl" };
{ \ar@{-} "mrr"; "mrrm" };
{ \ar@{-} "mrr"; "mrrr" };
\end{xy}
}
\bigminus
\adjustbox{valign=m}{
\begin{xy}
(  3,   0 )*{} = "root";
(  -3,  -4 )*{} = "l";
(  3,  -4 )*{} = "m";
(  9,  -4 )*{} = "r";
{ \ar@{-} "root"; "l" };
{ \ar@{-} "root"; "m" };
{ \ar@{-} "root"; "r" };
( 6, -8 )*{} = "rl";
(  9, -8 )*{} = "rm";
(  12, -8 )*{} = "rr";
{ \ar@{-} "r"; "rl" };
{ \ar@{-} "r"; "rm" };
{ \ar@{-} "r"; "rr" };
( 9, -12 )*{} = "rrr";
(  12, -12 )*{} = "rrm";
(  15, -12 )*{} = "rrl";
{ \ar@{-} "rr"; "rrr" };
{ \ar@{-} "rr"; "rrm" };
{ \ar@{-} "rr"; "rrl" };
(  9, -16 )*{} = "rrml";
(  12, -16 )*{} = "rrmm";
(  15, -16 )*{} = "rrmr";
{ \ar@{-} "rrm"; "rrml" };
{ \ar@{-} "rrm"; "rrmm" };
{ \ar@{-} "rrm"; "rrmr" };
\end{xy}
}
\bigplus
\adjustbox{valign=m}{
\begin{xy}
(  3,   0 )*{} = "root";
(  0,  -4 )*{} = "l";
(  3,  -4 )*{} = "m";
(  6,  -4 )*{} = "r";
{ \ar@{-} "root"; "l" };
{ \ar@{-} "root"; "m" };
{ \ar@{-} "root"; "r" };
(  3, -8 )*{} = "ml";
(  6, -8 )*{} = "mm";
(  9, -8 )*{} = "mr";
{ \ar@{-} "r"; "ml" };
{ \ar@{-} "r"; "mm" };
{ \ar@{-} "r"; "mr" };
( 3, -12 )*{} = "mrl";
(  6, -12 )*{} = "mrm";
(  9, -12 )*{} = "mrr";
{ \ar@{-} "mm"; "mrl" };
{ \ar@{-} "mm"; "mrm" };
{ \ar@{-} "mm"; "mrr" };
(  6, -16 )*{} = "mrrl";
(  9, -16 )*{} = "mrrm";
(  12, -16 )*{} = "mrrr";
{ \ar@{-} "mrr"; "mrrl" };
{ \ar@{-} "mrr"; "mrrm" };
{ \ar@{-} "mrr"; "mrrr" };
\end{xy}
}
\bigplus
\adjustbox{valign=m}{
\begin{xy}
(  3,   0 )*{} = "root";
(  -3,  -4 )*{} = "l";
(  3,  -4 )*{} = "m";
(  12,  -4 )*{} = "r";
{ \ar@{-} "root"; "l" };
{ \ar@{-} "root"; "m" };
{ \ar@{-} "root"; "r" };
( 0, -8 )*{} = "ll";
(  3, -8 )*{} = "lm";
(  6, -8 )*{} = "lr";
{ \ar@{-} "m"; "ll" };
{ \ar@{-} "m"; "lm" };
{ \ar@{-} "m"; "lr" };
( 9, -8 )*{} = "ml";
(  12, -8 )*{} = "mm";
(  15, -8 )*{} = "mr";
{ \ar@{-} "r"; "mr" };
{ \ar@{-} "r"; "mm" };
{ \ar@{-} "r"; "ml" };
(  9, -12 )*{} = "mll";
(  12, -12 )*{} = "mlm";
(  15, -12 )*{} = "mlr";
{ \ar@{-} "mm"; "mll" };
{ \ar@{-} "mm"; "mlm" };
{ \ar@{-} "mm"; "mlr" };
\end{xy}
}
\\[1mm]
& {}
\bigplus
\adjustbox{valign=m}{
\begin{xy}
(  3,   0 )*{} = "root";
(  -3,  -4 )*{} = "l";
(  3,  -4 )*{} = "m";
(  9,  -4 )*{} = "r";
{ \ar@{-} "root"; "l" };
{ \ar@{-} "root"; "m" };
{ \ar@{-} "root"; "r" };
( 6, -8 )*{} = "rl";
(  9, -8 )*{} = "rm";
(  12, -8 )*{} = "rr";
{ \ar@{-} "r"; "rl" };
{ \ar@{-} "r"; "rm" };
{ \ar@{-} "r"; "rr" };
( 9, -12 )*{} = "rrr";
(  12, -12 )*{} = "rrm";
(  15, -12 )*{} = "rrl";
{ \ar@{-} "rr"; "rrr" };
{ \ar@{-} "rr"; "rrm" };
{ \ar@{-} "rr"; "rrl" };
(  9, -16 )*{} = "rrml";
(  12, -16 )*{} = "rrmm";
(  15, -16 )*{} = "rrmr";
{ \ar@{-} "rrm"; "rrml" };
{ \ar@{-} "rrm"; "rrmm" };
{ \ar@{-} "rrm"; "rrmr" };
\end{xy}
}
\bigplus
\adjustbox{valign=m}{
\begin{xy}
(  3,   0 )*{} = "root";
(  -3,  -4 )*{} = "l";
(  3,  -4 )*{} = "m";
(  12,  -4 )*{} = "r";
{ \ar@{-} "root"; "l" };
{ \ar@{-} "root"; "m" };
{ \ar@{-} "root"; "r" };
( 0, -8 )*{} = "ml";
(  3, -8 )*{} = "mm";
(  6, -8 )*{} = "mr";
{ \ar@{-} "m"; "ml" };
{ \ar@{-} "m"; "mm" };
{ \ar@{-} "m"; "mr" };
( 9, -8 )*{} = "ml";
(  12, -8 )*{} = "mm";
(  15, -8 )*{} = "mr";
{ \ar@{-} "r"; "mr" };
{ \ar@{-} "r"; "mm" };
{ \ar@{-} "r"; "ml" };
(  12, -12 )*{} = "mll";
(  15, -12 )*{} = "mlm";
(  18, -12 )*{} = "mlr";
{ \ar@{-} "mr"; "mll" };
{ \ar@{-} "mr"; "mlm" };
{ \ar@{-} "mr"; "mlr" };
\end{xy}
}
\bigplus
\adjustbox{valign=m}{
\begin{xy}
(  3,   0 )*{} = "root";
(  0,  -4 )*{} = "l";
(  3,  -4 )*{} = "m";
(  6,  -4 )*{} = "r";
{ \ar@{-} "root"; "l" };
{ \ar@{-} "root"; "m" };
{ \ar@{-} "root"; "r" };
( 3, -8 )*{} = "rl";
(  6, -8 )*{} = "rm";
(  9, -8 )*{} = "rr";
{ \ar@{-} "r"; "rl" };
{ \ar@{-} "r"; "rm" };
{ \ar@{-} "r"; "rr" };
( 6, -12 )*{} = "rrl";
(  9, -12 )*{} = "rrm";
(  12, -12 )*{} = "rrr";
{ \ar@{-} "rr"; "rrr" };
{ \ar@{-} "rr"; "rrm" };
{ \ar@{-} "rr"; "rrl" };
(  9, -16 )*{} = "rrml";
(  12, -16 )*{} = "rrmm";
(  15, -16 )*{} = "rrmr";
{ \ar@{-} "rrr"; "rrml" };
{ \ar@{-} "rrr"; "rrmm" };
{ \ar@{-} "rrr"; "rrmr" };
\end{xy}
}
\end{align*}
Terms 1, 3 and terms 2, 5 cancel; no further reduction is possible, producing $\delta$.
\end{proof}


\begin{lemma}
\label{degree9case3}
Identifying the first $t$ of $\ell m(\alpha) = t \circ_1 t$
with the second $t$ of $\ell m(\beta) = t \circ_2 (t  \circ_3  t)$
we obtain the S-polynomial $\epsilon$ and
$\{ \alpha, \beta, \epsilon \}$ is self-reduced:
\begin{align*}
\epsilon
&=
\adjustbox{valign=m}{
\begin{xy}
(  3,   0 )*{\bullet} = "root";
( -3,  -4 )*{} = "l";
(  3,  -4 )*{} = "m";
(  12,  -4 )*{} = "r";
{ \ar@{-} "root"; "l" };
{ \ar@{-} "root"; "m" };
{ \ar@{-} "root"; "r" };
( 0, -8 )*{} = "ll";
(  3, -8 )*{} = "lm";
(  6, -8 )*{} = "lr";
{ \ar@{-} "m"; "ll" };
{ \ar@{-} "m"; "lm" };
{ \ar@{-} "m"; "lr" };
( 9, -8 )*{} = "ml";
(  12, -8 )*{} = "mm";
(  15, -8 )*{} = "mr";
{ \ar@{-} "r"; "mr" };
{ \ar@{-} "r"; "mm" };
{ \ar@{-} "r"; "ml" };
(  9, -12 )*{} = "mll";
(  12, -12 )*{} = "mlm";
(  15, -12 )*{} = "mlr";
{ \ar@{-} "mm"; "mll" };
{ \ar@{-} "mm"; "mlm" };
{ \ar@{-} "mm"; "mlr" };
\end{xy}
}
\bigplus
\adjustbox{valign=m}{
\begin{xy}
(  3,   0 )*{} = "root";
(  -3,  -4 )*{} = "l";
(  3,  -4 )*{} = "m";
(  12,  -4 )*{} = "r";
{ \ar@{-} "root"; "l" };
{ \ar@{-} "root"; "m" };
{ \ar@{-} "root"; "r" };
( 0, -8 )*{} = "ml";
(  3, -8 )*{} = "mm";
(  6, -8 )*{} = "mr";
{ \ar@{-} "m"; "ml" };
{ \ar@{-} "m"; "mm" };
{ \ar@{-} "m"; "mr" };
( 9, -8 )*{} = "ml";
(  12, -8 )*{} = "mm";
(  15, -8 )*{} = "mr";
{ \ar@{-} "r"; "mr" };
{ \ar@{-} "r"; "mm" };
{ \ar@{-} "r"; "ml" };
(  12, -12 )*{} = "mll";
(  15, -12 )*{} = "mlm";
(  18, -12 )*{} = "mlr";
{ \ar@{-} "mr"; "mll" };
{ \ar@{-} "mr"; "mlm" };
{ \ar@{-} "mr"; "mlr" };
\end{xy}
}
\bigminus
\adjustbox{valign=m}{
\begin{xy}
(  3,   0 )*{} = "root";
(  0,  -4 )*{} = "l";
(  3,  -4 )*{} = "m";
(  6,  -4 )*{} = "r";
{ \ar@{-} "root"; "l" };
{ \ar@{-} "root"; "m" };
{ \ar@{-} "root"; "r" };
(  0, -8 )*{} = "rl";
(  6, -8 )*{} = "rm";
(  15, -8 )*{} = "rr";
{ \ar@{-} "r"; "rl" };
{ \ar@{-} "r"; "rm" };
{ \ar@{-} "r"; "rr" };
( 12, -12 )*{} = "rrl";
(  15, -12 )*{} = "rrm";
(  18, -12 )*{} = "rrr";
{ \ar@{-} "rr"; "rrl" };
{ \ar@{-} "rr"; "rrm" };
{ \ar@{-} "rr"; "rrr" };
(  3, -12 )*{} = "rll";
(  6, -12 )*{} = "rlm";
(  9, -12 )*{} = "rlr";
{ \ar@{-} "rm"; "rll" };
{ \ar@{-} "rm"; "rlm" };
{ \ar@{-} "rm"; "rlr" };
\end{xy}
}
\bigminus
\adjustbox{valign=m}{
\begin{xy}
(  3,   0 )*{} = "root";
(  0,  -4 )*{} = "l";
(  3,  -4 )*{} = "m";
(  6,  -4 )*{} = "r";
{ \ar@{-} "root"; "l" };
{ \ar@{-} "root"; "m" };
{ \ar@{-} "root"; "r" };
( 3, -8 )*{} = "rl";
(  6, -8 )*{} = "rm";
(  9, -8 )*{} = "rr";
{ \ar@{-} "r"; "rl" };
{ \ar@{-} "r"; "rm" };
{ \ar@{-} "r"; "rr" };
( 6, -12 )*{} = "rrl";
(  9, -12 )*{} = "rrm";
(  12, -12 )*{} = "rrr";
{ \ar@{-} "rr"; "rrr" };
{ \ar@{-} "rr"; "rrm" };
{ \ar@{-} "rr"; "rrl" };
(  9, -16 )*{} = "rrml";
(  12, -16 )*{} = "rrmm";
(  15, -16 )*{} = "rrmr";
{ \ar@{-} "rrr"; "rrml" };
{ \ar@{-} "rrr"; "rrmm" };
{ \ar@{-} "rrr"; "rrmr" };
\end{xy}
}
\\
&=
( ( t \circ_3 t ) \circ_2 t ) \circ_6 t
+
( ( t \circ_3 t ) \circ_2 t ) \circ_7 t
-
( t \circ_3 ( t \circ_3 t ) ) \circ_4 t
-
( t \circ_3 ( t \circ_3 t ) ) \circ_7 t.
\end{align*}
\end{lemma}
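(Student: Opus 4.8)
The plan is to follow the same S-polynomial-and-reduce strategy used in Lemmas \ref{degree9case1} and \ref{degree9case2}. First I would pin down the relevant SCM $p$ explicitly. Identifying the root of $\ell m(\alpha) = t\circ_1 t$ with the middle node of $\ell m(\beta) = t\circ_2(t\circ_3 t)$ forces a weight-$4$ tree of arity $9$ in which the root carries the factor $\ell m(\beta)$ while its position-$2$ child simultaneously carries the factor $\ell m(\alpha)$ at its own position $1$. I would record this tree in its two guises,
\[
t\circ_2\bigl((t\circ_1 t)\circ_5 t\bigr) \;=\; p \;=\; \bigl(t\circ_2(t\circ_3 t)\bigr)\circ_2 t,
\]
and verify the equality with Lemma \ref{pcrules}, exactly as the two preceding lemmas open.

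Next I would assemble the S-polynomial $S(p,\alpha,\beta) = M(p,\ell m(\alpha),\alpha) - M(p,\ell m(\beta),\beta)$ by substituting the full relations into the two factorizations, namely $M(p,\ell m(\alpha),\alpha) = t\circ_2(\alpha\circ_5 t)$ and $M(p,\ell m(\beta),\beta) = \beta\circ_2 t$. Expanding both and cancelling the common leading monomial $p$ leaves the four-term difference
\[
t\circ_2\bigl((t\circ_2 t)\circ_5 t\bigr) + t\circ_2\bigl((t\circ_3 t)\circ_5 t\bigr) - \bigl(t\circ_3(t\circ_2 t)\bigr)\circ_2 t - \bigl(t\circ_3(t\circ_3 t)\bigr)\circ_2 t.
\]

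Then I would run the reduction loop: at each stage rewrite every partial composition into path-lex canonical form via Lemma \ref{pcrules}, locate the leading monomial, test whether it is divisible by $\ell m(\alpha)$ or $\ell m(\beta)$, and apply rewrite rule \eqref{frr} or \eqref{mybeta2} accordingly, cancelling any pair of summands that denote the same tree. Since each rewrite strictly lowers the leading monomial, the loop terminates, and I expect it to close on the four surviving monomials
\[
((t\circ_3 t)\circ_2 t)\circ_6 t,\;\; ((t\circ_3 t)\circ_2 t)\circ_7 t,\;\; (t\circ_3(t\circ_3 t))\circ_4 t,\;\; (t\circ_3(t\circ_3 t))\circ_7 t,
\]
whose monic combination is $\epsilon$. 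Because the operation has even degree, Koszul signs are trivial, so only integer coefficients need tracking. Finally I would confirm self-reducedness of $\{\alpha,\beta,\epsilon\}$: the head $\ell m(\epsilon) = ((t\circ_3 t)\circ_2 t)\circ_6 t$ has no internal node occupying a position-$1$ slot, and no position-$2$ child whose own position-$3$ slot is internal, so it is divisible by neither $\ell m(\alpha)$ nor $\ell m(\beta)$; conversely those two, having smaller arity, cannot be divisible by $\ell m(\epsilon)$. The main obstacle is purely the bookkeeping inside the reduction loop — repeatedly normalising nested partial compositions with Lemma \ref{pcrules}, matching trees written with different composition indices so that the intended cancellations fire, and ensuring that after each leading term is eliminated the remaining lower terms are themselves fully reduced rather than merely the head.
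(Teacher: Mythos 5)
Your proposal is correct and follows essentially the same route as the paper: the same SCM $t\circ_2((t\circ_1 t)\circ_5 t)=\ell m(\beta)\circ_2 t$, the same S-polynomial $t\circ_2(\alpha\circ_5 t)-\beta\circ_2 t$ with the same four-term difference after cancelling the common monomial, and the same $\alpha$/$\beta$ reduction loop terminating in (up to sign) the four monomials of $\epsilon$. Your explicit divisibility check for self-reducedness of $\{\alpha,\beta,\epsilon\}$ is also the right verification, which the paper leaves implicit in the phrase ``no further reduction is possible.''
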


\begin{proof}
We have the equations
\[
t \circ_2 ( \ell m(\alpha) \circ_5 t )
=
t \circ_2 ( ( t \circ_1 t ) \circ_5 t )
=
\!\!\!\!
\adjustbox{valign=m}{
\begin{xy}
(  3,   0 )*{} = "root";
( -3,  -4 )*{} = "l";
(  3,  -4 )*{} = "m";
(  9,  -4 )*{} = "r";
{ \ar@{-} "root"; "l" };
{ \ar@{-} "root"; "m" };
{ \ar@{-} "root"; "r" };
( -3, -8 )*{} = "ml";
(  3, -8 )*{} = "mm";
(  9, -8 )*{} = "mr";
{ \ar@{-} "m"; "mr" };
{ \ar@{-} "m"; "mm" };
{ \ar@{-} "m"; "ml" };
(  6, -12 )*{} = "mrl";
(  9, -12 )*{} = "mrm";
(  12, -12 )*{} = "mrr";
{ \ar@{-} "mr"; "mrl" };
{ \ar@{-} "mr"; "mrm" };
{ \ar@{-} "mr"; "mrr" };
( -6, -12 )*{} = "mll";
(  -3, -12 )*{} = "mlm";
(  0, -12 )*{} = "mlr";
{ \ar@{-} "ml"; "mll" };
{ \ar@{-} "ml"; "mlm" };
{ \ar@{-} "ml"; "mlr" };
\end{xy}
}
\!\!\!\!
=
( t \circ_2 (t  \circ_3  t)) \circ_2 t
=
\ell m(\beta) \circ_2  t.
\]
The resulting S-polynomial $t \circ_2 ( \alpha \circ_5 t ) - \beta \circ_2 t$ is
\begin{align*}
&
   t \circ_2  (( t \circ_1 t ) \circ_5  t )
+  t \circ_2 (( t \circ_2 t ) \circ_5  t )
+  t \circ_2 (( t \circ_3 t )\circ_5  t )
\\
& {}
- ( t \circ_2 ( t \circ_3 t ) ) \circ_2 t
- ( t \circ_3 ( t \circ_2 t ) ) \circ_2 t
- ( t \circ_3 ( t \circ_3 t ) ) \circ_2 t.
\end{align*}
Terms 1, 4 cancel, leaving
\begin{align*}
&
  t \circ_2 (( t \circ_2 t ) \circ_5  t )
+ t \circ_2 (( t \circ_3 t )\circ_5  t )
- ( t \circ_3 ( t \circ_2 t ) ) \circ_2 t
- ( t \circ_3 ( t \circ_3 t ) ) \circ_2 t
\\
&=
\adjustbox{valign=m}{
\begin{xy}
(  3,   0 )*{\bullet} = "root";
( -3,  -4 )*{} = "l";
(  3,  -4 )*{} = "m";
(  9,  -4 )*{} = "r";
{ \ar@{-} "root"; "l" };
{ \ar@{-} "root"; "m" };
{ \ar@{-} "root"; "r" };
( -3, -8 )*{} = "ml";
(  3, -8 )*{} = "mm";
(  12, -8 )*{} = "mr";
{ \ar@{-} "m"; "ml" };
{ \ar@{-} "m"; "mm" };
{ \ar@{-} "m"; "mr" };
( 9, -12 )*{} = "mrl";
(  12, -12 )*{} = "mrm";
(  15, -12 )*{} = "mrr";
{ \ar@{-} "mr"; "mrl" };
{ \ar@{-} "mr"; "mrm" };
{ \ar@{-} "mr"; "mrr" };
(  0, -12 )*{} = "mml";
(  3, -12 )*{} = "mmm";
(  6, -12 )*{} = "mmr";
{ \ar@{-} "mm"; "mml" };
{ \ar@{-} "mm"; "mmm" };
{ \ar@{-} "mm"; "mmr" };
\end{xy}
}
\bigplus
\adjustbox{valign=m}{
\begin{xy}
(  3,   0 )*{} = "root";
(  0,  -4 )*{} = "l";
(  3,  -4 )*{} = "m";
(  6,  -4 )*{} = "r";
{ \ar@{-} "root"; "l" };
{ \ar@{-} "root"; "m" };
{ \ar@{-} "root"; "r" };
(  0, -8 )*{} = "ml";
(  3, -8 )*{} = "mm";
(  6, -8 )*{} = "mr";
{ \ar@{-} "m"; "ml" };
{ \ar@{-} "m"; "mm" };
{ \ar@{-} "m"; "mr" };
( 3, -12 )*{} = "mrl";
(  6, -12 )*{} = "mrm";
(  9, -12 )*{} = "mrr";
{ \ar@{-} "mr"; "mrl" };
{ \ar@{-} "mr"; "mrm" };
{ \ar@{-} "mr"; "mrr" };
(  6, -16 )*{} = "mrrl";
(  9, -16 )*{} = "mrrm";
(  12, -16 )*{} = "mrrr";
{ \ar@{-} "mrr"; "mrrl" };
{ \ar@{-} "mrr"; "mrrm" };
{ \ar@{-} "mrr"; "mrrr" };
\end{xy}
}
\bigminus
\adjustbox{valign=m}{
\begin{xy}
(  3,   0 )*{} = "root";
( -3,  -4 )*{} = "l";
(  3,  -4 )*{} = "m";
(  12,  -4 )*{} = "r";
{ \ar@{-} "root"; "l" };
{ \ar@{-} "root"; "m" };
{ \ar@{-} "root"; "r" };
( 0, -8 )*{} = "ll";
(  3, -8 )*{} = "lm";
(  6, -8 )*{} = "lr";
{ \ar@{-} "m"; "ll" };
{ \ar@{-} "m"; "lm" };
{ \ar@{-} "m"; "lr" };
( 9, -8 )*{} = "ml";
(  12, -8 )*{} = "mm";
(  15, -8 )*{} = "mr";
{ \ar@{-} "r"; "mr" };
{ \ar@{-} "r"; "mm" };
{ \ar@{-} "r"; "ml" };
(  9, -12 )*{} = "mll";
(  12, -12 )*{} = "mlm";
(  15, -12 )*{} = "mlr";
{ \ar@{-} "mm"; "mll" };
{ \ar@{-} "mm"; "mlm" };
{ \ar@{-} "mm"; "mlr" };
\end{xy}
}
\bigminus
\adjustbox{valign=m}{
\begin{xy}
(  3,   0 )*{} = "root";
(  -3,  -4 )*{} = "l";
(  3,  -4 )*{} = "m";
(  12,  -4 )*{} = "r";
{ \ar@{-} "root"; "l" };
{ \ar@{-} "root"; "m" };
{ \ar@{-} "root"; "r" };
( 0, -8 )*{} = "ml";
(  3, -8 )*{} = "mm";
(  6, -8 )*{} = "mr";
{ \ar@{-} "m"; "ml" };
{ \ar@{-} "m"; "mm" };
{ \ar@{-} "m"; "mr" };
( 9, -8 )*{} = "ml";
(  12, -8 )*{} = "mm";
(  15, -8 )*{} = "mr";
{ \ar@{-} "r"; "mr" };
{ \ar@{-} "r"; "mm" };
{ \ar@{-} "r"; "ml" };
(  12, -12 )*{} = "mll";
(  15, -12 )*{} = "mlm";
(  18, -12 )*{} = "mlr";
{ \ar@{-} "mr"; "mll" };
{ \ar@{-} "mr"; "mlm" };
{ \ar@{-} "mr"; "mlr" };
\end{xy}
}
\end{align*}
We reduce terms 1, 2 using $\beta$:
\begin{align*}
& {}
\bigminus
\adjustbox{valign=m}{
\begin{xy}
(  3,   0 )*{} = "root";
(  0,  -4 )*{} = "l";
(  3,  -4 )*{} = "m";
(  6,  -4 )*{} = "r";
{ \ar@{-} "root"; "l" };
{ \ar@{-} "root"; "m" };
{ \ar@{-} "root"; "r" };
( -3, -8 )*{} = "rl";
(  6, -8 )*{} = "rm";
(  9, -8 )*{} = "rr";
{ \ar@{-} "r"; "rl" };
{ \ar@{-} "r"; "rm" };
{ \ar@{-} "r"; "rr" };
(  3, -12 )*{} = "rml";
(  6, -12 )*{} = "rmm";
(  9, -12 )*{} = "rmr";
{ \ar@{-} "rm"; "rml" };
{ \ar@{-} "rm"; "rmm" };
{ \ar@{-} "rm"; "rmr" };
( -6, -12 )*{} = "rll";
( -3, -12 )*{} = "rlm";
(  0, -12 )*{} = "rlr";
{ \ar@{-} "rl"; "rll" };
{ \ar@{-} "rl"; "rlm" };
{ \ar@{-} "rl"; "rlr" };
\end{xy}
}
 \bigminus
\adjustbox{valign=m}{
\begin{xy}
(  3,   0 )*{} = "root";
(  0,  -4 )*{} = "l";
(  3,  -4 )*{} = "m";
(  6,  -4 )*{} = "r";
{ \ar@{-} "root"; "l" };
{ \ar@{-} "root"; "m" };
{ \ar@{-} "root"; "r" };
(  1.5, -8 )*{} = "rl";
(  6, -8 )*{} = "rm";
( 10.5, -8 )*{} = "rr";
{ \ar@{-} "r"; "rl" };
{ \ar@{-} "r"; "rm" };
{ \ar@{-} "r"; "rr" };
(   7.5, -12 )*{} = "rrl";
(  10.5, -12 )*{} = "rrm";
(  13.5, -12 )*{} = "rrr";
{ \ar@{-} "rr"; "rrl" };
{ \ar@{-} "rr"; "rrm" };
{ \ar@{-} "rr"; "rrr" };
( -1.5, -12 )*{} = "rll";
(  1.5, -12 )*{} = "rlm";
(  4.5, -12 )*{} = "rlr";
{ \ar@{-} "rl"; "rll" };
{ \ar@{-} "rl"; "rlm" };
{ \ar@{-} "rl"; "rlr" };
\end{xy}
}
 \bigminus
\adjustbox{valign=m}{
\begin{xy}
(  3,   0 )*{} = "root";
(  0,  -4 )*{} = "l";
(  3,  -4 )*{} = "m";
(  6,  -4 )*{} = "r";
{ \ar@{-} "root"; "l" };
{ \ar@{-} "root"; "m" };
{ \ar@{-} "root"; "r" };
(  3, -8 )*{} = "ml";
(  6, -8 )*{} = "mm";
(  9, -8 )*{} = "mr";
{ \ar@{-} "r"; "ml" };
{ \ar@{-} "r"; "mm" };
{ \ar@{-} "r"; "mr" };
( 3, -12 )*{} = "mrl";
(  6, -12 )*{} = "mrm";
(  9, -12 )*{} = "mrr";
{ \ar@{-} "mm"; "mrl" };
{ \ar@{-} "mm"; "mrm" };
{ \ar@{-} "mm"; "mrr" };
(  6, -16 )*{} = "mrrl";
(  9, -16 )*{} = "mrrm";
(  12, -16 )*{} = "mrrr";
{ \ar@{-} "mrr"; "mrrl" };
{ \ar@{-} "mrr"; "mrrm" };
{ \ar@{-} "mrr"; "mrrr" };
\end{xy}
}
\bigminus
\adjustbox{valign=m}{
\begin{xy}
(  3,   0 )*{} = "root";
(  0,  -4 )*{} = "l";
(  3,  -4 )*{} = "m";
(  6,  -4 )*{} = "r";
{ \ar@{-} "root"; "l" };
{ \ar@{-} "root"; "m" };
{ \ar@{-} "root"; "r" };
(  3, -8 )*{} = "rl";
(  6, -8 )*{} = "rm";
(  9, -8 )*{} = "rr";
{ \ar@{-} "r"; "rl" };
{ \ar@{-} "r"; "rm" };
{ \ar@{-} "r"; "rr" };
(  6, -12 )*{} = "rrl";
(  9, -12 )*{} = "rrm";
( 12, -12 )*{} = "rrr";
{ \ar@{-} "rr"; "rrl" };
{ \ar@{-} "rr"; "rrm" };
{ \ar@{-} "rr"; "rrr" };
(  6, -16 )*{} = "rrml";
(  9, -16 )*{} = "rrmm";
(  12, -16 )*{} = "rrmr";
{ \ar@{-} "rrm"; "rrml" };
{ \ar@{-} "rrm"; "rrmm" };
{ \ar@{-} "rrm"; "rrmr" };
\end{xy}
}
 \bigminus
\adjustbox{valign=m}{
\begin{xy}
(  3,   0 )*{\bullet} = "root";
(  0,  -4 )*{} = "l";
(  3,  -4 )*{} = "m";
( 12,  -4 )*{} = "r";
{ \ar@{-} "root"; "l" };
{ \ar@{-} "root"; "m" };
{ \ar@{-} "root"; "r" };
( 0, -8 )*{} = "ll";
(  3, -8 )*{} = "lm";
(  6, -8 )*{} = "lr";
{ \ar@{-} "m"; "ll" };
{ \ar@{-} "m"; "lm" };
{ \ar@{-} "m"; "lr" };
( 9, -8 )*{} = "ml";
(  12, -8 )*{} = "mm";
(  15, -8 )*{} = "mr";
{ \ar@{-} "r"; "mr" };
{ \ar@{-} "r"; "mm" };
{ \ar@{-} "r"; "ml" };
(  9, -12 )*{} = "mll";
(  12, -12 )*{} = "mlm";
(  15, -12 )*{} = "mlr";
{ \ar@{-} "mm"; "mll" };
{ \ar@{-} "mm"; "mlm" };
{ \ar@{-} "mm"; "mlr" };
\end{xy}
}
\bigminus
\adjustbox{valign=m}{
\begin{xy}
(  3,   0 )*{} = "root";
(  0,  -4 )*{} = "l";
(  3,  -4 )*{} = "m";
(  12,  -4 )*{} = "r";
{ \ar@{-} "root"; "l" };
{ \ar@{-} "root"; "m" };
{ \ar@{-} "root"; "r" };
( 0, -8 )*{} = "ml";
(  3, -8 )*{} = "mm";
(  6, -8 )*{} = "mr";
{ \ar@{-} "m"; "ml" };
{ \ar@{-} "m"; "mm" };
{ \ar@{-} "m"; "mr" };
( 9, -8 )*{} = "ml";
(  12, -8 )*{} = "mm";
(  15, -8 )*{} = "mr";
{ \ar@{-} "r"; "mr" };
{ \ar@{-} "r"; "mm" };
{ \ar@{-} "r"; "ml" };
(  12, -12 )*{} = "mll";
(  15, -12 )*{} = "mlm";
(  18, -12 )*{} = "mlr";
{ \ar@{-} "mr"; "mll" };
{ \ar@{-} "mr"; "mlm" };
{ \ar@{-} "mr"; "mlr" };
\end{xy}
}
\end{align*}
Reducing terms 1, 2 using $\alpha$ gives
\begin{align*}
&
\adjustbox{valign=m}{
\begin{xy}
(  3,   0 )*{} = "root";
(  0,  -4 )*{} = "l";
(  3,  -4 )*{} = "m";
(  6,  -4 )*{} = "r";
{ \ar@{-} "root"; "l" };
{ \ar@{-} "root"; "m" };
{ \ar@{-} "root"; "r" };
(  3, -8 )*{} = "rl";
(  6, -8 )*{} = "rm";
(  9, -8 )*{} = "rr";
{ \ar@{-} "r"; "rl" };
{ \ar@{-} "r"; "rm" };
{ \ar@{-} "r"; "rr" };
( 3, -12 )*{} = "mrl";
(  6, -12 )*{} = "mrm";
(  9, -12 )*{} = "mrr";
{ \ar@{-} "rm"; "mrl" };
{ \ar@{-} "rm"; "mrm" };
{ \ar@{-} "rm"; "mrr" };
(  6, -16 )*{} = "mrrl";
(  9, -16 )*{} = "mrrm";
(  12, -16 )*{} = "mrrr";
{ \ar@{-} "mrr"; "mrrl" };
{ \ar@{-} "mrr"; "mrrm" };
{ \ar@{-} "mrr"; "mrrr" };
\end{xy}
}
\bigplus
\adjustbox{valign=m}{
\begin{xy}
(  3,   0 )*{} = "root";
(  0,  -4 )*{} = "l";
(  3,  -4 )*{} = "m";
(  6,  -4 )*{} = "r";
{ \ar@{-} "root"; "l" };
{ \ar@{-} "root"; "m" };
{ \ar@{-} "root"; "r" };
( 3, -8 )*{} = "rl";
(  6, -8 )*{} = "rm";
(  9, -8 )*{} = "rr";
{ \ar@{-} "r"; "rl" };
{ \ar@{-} "r"; "rm" };
{ \ar@{-} "r"; "rr" };
( 6, -12 )*{} = "rrr";
(  9, -12 )*{} = "rrm";
(  12, -12 )*{} = "rrl";
{ \ar@{-} "rr"; "rrr" };
{ \ar@{-} "rr"; "rrm" };
{ \ar@{-} "rr"; "rrl" };
(  6, -16 )*{} = "rrml";
(  9, -16 )*{} = "rrmm";
(  12, -16 )*{} = "rrmr";
{ \ar@{-} "rrm"; "rrml" };
{ \ar@{-} "rrm"; "rrmm" };
{ \ar@{-} "rrm"; "rrmr" };
\end{xy}
}
\bigplus
\adjustbox{valign=m}{
\begin{xy}
(  3,   0 )*{} = "root";
(  0,  -4 )*{} = "l";
(  3,  -4 )*{} = "m";
(  6,  -4 )*{} = "r";
{ \ar@{-} "root"; "l" };
{ \ar@{-} "root"; "m" };
{ \ar@{-} "root"; "r" };
(  0, -8 )*{} = "rl";
(  6, -8 )*{} = "rm";
(  15, -8 )*{} = "rr";
{ \ar@{-} "r"; "rl" };
{ \ar@{-} "r"; "rm" };
{ \ar@{-} "r"; "rr" };
( 12, -12 )*{} = "rrl";
(  15, -12 )*{} = "rrm";
(  18, -12 )*{} = "rrr";
{ \ar@{-} "rr"; "rrl" };
{ \ar@{-} "rr"; "rrm" };
{ \ar@{-} "rr"; "rrr" };
(  3, -12 )*{} = "rll";
(  6, -12 )*{} = "rlm";
(  9, -12 )*{} = "rlr";
{ \ar@{-} "rm"; "rll" };
{ \ar@{-} "rm"; "rlm" };
{ \ar@{-} "rm"; "rlr" };
\end{xy}
}
\bigplus
\adjustbox{valign=m}{
\begin{xy}
(  3,   0 )*{} = "root";
(  0,  -4 )*{} = "l";
(  3,  -4 )*{} = "m";
(  6,  -4 )*{} = "r";
{ \ar@{-} "root"; "l" };
{ \ar@{-} "root"; "m" };
{ \ar@{-} "root"; "r" };
( 3, -8 )*{} = "rl";
(  6, -8 )*{} = "rm";
(  9, -8 )*{} = "rr";
{ \ar@{-} "r"; "rl" };
{ \ar@{-} "r"; "rm" };
{ \ar@{-} "r"; "rr" };
( 6, -12 )*{} = "rrl";
(  9, -12 )*{} = "rrm";
(  12, -12 )*{} = "rrr";
{ \ar@{-} "rr"; "rrr" };
{ \ar@{-} "rr"; "rrm" };
{ \ar@{-} "rr"; "rrl" };
(  9, -16 )*{} = "rrml";
(  12, -16 )*{} = "rrmm";
(  15, -16 )*{} = "rrmr";
{ \ar@{-} "rrr"; "rrml" };
{ \ar@{-} "rrr"; "rrmm" };
{ \ar@{-} "rrr"; "rrmr" };
\end{xy}
}
\\
&
\bigminus
\adjustbox{valign=m}{
\begin{xy}
(  3,   0 )*{} = "root";
(  0,  -4 )*{} = "l";
(  3,  -4 )*{} = "m";
(  6,  -4 )*{} = "r";
{ \ar@{-} "root"; "l" };
{ \ar@{-} "root"; "m" };
{ \ar@{-} "root"; "r" };
(  3, -8 )*{} = "rl";
(  6, -8 )*{} = "rm";
(  9, -8 )*{} = "rr";
{ \ar@{-} "r"; "rl" };
{ \ar@{-} "r"; "rm" };
{ \ar@{-} "r"; "rr" };
(  6, -12 )*{} = "rrl";
(  9, -12 )*{} = "rrm";
( 12, -12 )*{} = "rrr";
{ \ar@{-} "rr"; "rrl" };
{ \ar@{-} "rr"; "rrm" };
{ \ar@{-} "rr"; "rrr" };
(  6, -16 )*{} = "rrml";
(  9, -16 )*{} = "rrmm";
(  12, -16 )*{} = "rrmr";
{ \ar@{-} "rrm"; "rrml" };
{ \ar@{-} "rrm"; "rrmm" };
{ \ar@{-} "rrm"; "rrmr" };
\end{xy}
}
\bigminus
\adjustbox{valign=m}{
\begin{xy}
(  3,   0 )*{} = "root";
(  0,  -4 )*{} = "l";
(  3,  -4 )*{} = "m";
(  6,  -4 )*{} = "r";
{ \ar@{-} "root"; "l" };
{ \ar@{-} "root"; "m" };
{ \ar@{-} "root"; "r" };
(  3, -8 )*{} = "rl";
(  6, -8 )*{} = "rm";
(  9, -8 )*{} = "rr";
{ \ar@{-} "r"; "rl" };
{ \ar@{-} "r"; "rm" };
{ \ar@{-} "r"; "rr" };
( 3, -12 )*{} = "mrl";
(  6, -12 )*{} = "mrm";
(  9, -12 )*{} = "mrr";
{ \ar@{-} "rm"; "mrl" };
{ \ar@{-} "rm"; "mrm" };
{ \ar@{-} "rm"; "mrr" };
(  6, -16 )*{} = "mrrl";
(  9, -16 )*{} = "mrrm";
(  12, -16 )*{} = "mrrr";
{ \ar@{-} "mrr"; "mrrl" };
{ \ar@{-} "mrr"; "mrrm" };
{ \ar@{-} "mrr"; "mrrr" };
\end{xy}
}
\bigminus
\adjustbox{valign=m}{
\begin{xy}
(  3,   0 )*{\bullet} = "root";
(  0,  -4 )*{} = "l";
(  3,  -4 )*{} = "m";
(  12,  -4 )*{} = "r";
{ \ar@{-} "root"; "l" };
{ \ar@{-} "root"; "m" };
{ \ar@{-} "root"; "r" };
( 0, -8 )*{} = "ll";
(  3, -8 )*{} = "lm";
(  6, -8 )*{} = "lr";
{ \ar@{-} "m"; "ll" };
{ \ar@{-} "m"; "lm" };
{ \ar@{-} "m"; "lr" };
( 9, -8 )*{} = "ml";
(  12, -8 )*{} = "mm";
(  15, -8 )*{} = "mr";
{ \ar@{-} "r"; "mr" };
{ \ar@{-} "r"; "mm" };
{ \ar@{-} "r"; "ml" };
(  9, -12 )*{} = "mll";
(  12, -12 )*{} = "mlm";
(  15, -12 )*{} = "mlr";
{ \ar@{-} "mm"; "mll" };
{ \ar@{-} "mm"; "mlm" };
{ \ar@{-} "mm"; "mlr" };
\end{xy}
}
\bigminus
\adjustbox{valign=m}{
\begin{xy}
(  3,   0 )*{} = "root";
(  -3,  -4 )*{} = "l";
(  3,  -4 )*{} = "m";
(  12,  -4 )*{} = "r";
{ \ar@{-} "root"; "l" };
{ \ar@{-} "root"; "m" };
{ \ar@{-} "root"; "r" };
( 0, -8 )*{} = "ml";
(  3, -8 )*{} = "mm";
(  6, -8 )*{} = "mr";
{ \ar@{-} "m"; "ml" };
{ \ar@{-} "m"; "mm" };
{ \ar@{-} "m"; "mr" };
( 9, -8 )*{} = "ml";
(  12, -8 )*{} = "mm";
(  15, -8 )*{} = "mr";
{ \ar@{-} "r"; "mr" };
{ \ar@{-} "r"; "mm" };
{ \ar@{-} "r"; "ml" };
(  12, -12 )*{} = "mll";
(  15, -12 )*{} = "mlm";
(  18, -12 )*{} = "mlr";
{ \ar@{-} "mr"; "mll" };
{ \ar@{-} "mr"; "mlm" };
{ \ar@{-} "mr"; "mlr" };
\end{xy}
}
\end{align*}
Terms 1, 6 and terms 2, 5 cancel.
No further reduction is possible, giving $-\epsilon$.
\end{proof}


\begin{lemma}
\label{degree9case4}
Identifying the first $t$ of $\ell m(\alpha) = t \circ_1 t$
with the third $t$ of $\ell m(\beta) = t \circ_2 (t  \circ_3  t)$
we obtain new S-polynomial $\zeta$, and
$\{ \alpha, \beta, \zeta \}$ is self-reduced:
\[
\zeta
=
\adjustbox{valign=m}{
\begin{xy}
(  3,   0 )*{\bullet} = "root";
(  0,  -4 )*{} = "l";
(  3,  -4 )*{} = "m";
(  6,  -4 )*{} = "r";
{ \ar@{-} "root"; "l" };
{ \ar@{-} "root"; "m" };
{ \ar@{-} "root"; "r" };
(  0, -8 )*{} = "rl";
(  6, -8 )*{} = "rm";
(  15, -8 )*{} = "rr";
{ \ar@{-} "r"; "rl" };
{ \ar@{-} "r"; "rm" };
{ \ar@{-} "r"; "rr" };
( 12, -12 )*{} = "rrl";
(  15, -12 )*{} = "rrm";
(  18, -12 )*{} = "rrr";
{ \ar@{-} "rr"; "rrl" };
{ \ar@{-} "rr"; "rrm" };
{ \ar@{-} "rr"; "rrr" };
(  3, -12 )*{} = "rml";
(  6, -12 )*{} = "rmm";
(  9, -12 )*{} = "rmr";
{ \ar@{-} "rm"; "rml" };
{ \ar@{-} "rm"; "rmm" };
{ \ar@{-} "rm"; "rmr" };
\end{xy}
}
\bigminus
\adjustbox{valign=m}{
\begin{xy}
(  3,   0 )*{} = "root";
(  0,  -4 )*{} = "l";
(  3,  -4 )*{} = "m";
(  6,  -4 )*{} = "r";
{ \ar@{-} "root"; "l" };
{ \ar@{-} "root"; "m" };
{ \ar@{-} "root"; "r" };
( 3, -8 )*{} = "rl";
(  6, -8 )*{} = "rm";
(  9, -8 )*{} = "rr";
{ \ar@{-} "r"; "rl" };
{ \ar@{-} "r"; "rm" };
{ \ar@{-} "r"; "rr" };
( 6, -12 )*{} = "rrl";
(  9, -12 )*{} = "rrm";
(  12, -12 )*{} = "rrr";
{ \ar@{-} "rr"; "rrr" };
{ \ar@{-} "rr"; "rrm" };
{ \ar@{-} "rr"; "rrl" };
(  9, -16 )*{} = "rrml";
(  12, -16 )*{} = "rrmm";
(  15, -16 )*{} = "rrmr";
{ \ar@{-} "rrr"; "rrml" };
{ \ar@{-} "rrr"; "rrmm" };
{ \ar@{-} "rrr"; "rrmr" };
\end{xy}
}
=
t \circ_3 ( ( t \circ_2 t ) \circ_5 t )
-
t \circ_3 ( t \circ_3 ( t \circ_3 t ) ).
\]
\end{lemma}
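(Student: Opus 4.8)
The plan is to mimic the pattern of Lemmas \ref{degree9case1}--\ref{degree9case3}. First I would record the overlap explicitly: identifying the root of $\alpha$ (the first $t$ of $\ell m(\alpha)=t\circ_1 t$) with the deepest $t$ of $\beta$ (the third $t$ of $\ell m(\beta)=t\circ_2(t\circ_3 t)$) produces the small common multiple of arity $9$
\[
t\circ_2\big(t\circ_3(t\circ_1 t)\big) \;=\; \ell m(\beta)\circ_4 t,
\]
which one checks is a single tree by comparing path sequences (the node reached from the root by $\circ_2$ then $\circ_3$ acquires a child at its first leaf, global leaf $4$). Applying the identical sequence of partial compositions to $\alpha$ and to $\beta$ gives $t\circ_2(t\circ_3\alpha)$ and $\beta\circ_4 t$, each a sum of three monomials; their difference is the S-polynomial, and the two copies of the leading monomial cancel, leaving
\[
t\circ_2\big(t\circ_3(t\circ_2 t)\big)+t\circ_2\big(t\circ_3(t\circ_3 t)\big)-\big(t\circ_3(t\circ_2 t)\big)\circ_4 t-\big(t\circ_3(t\circ_3 t)\big)\circ_4 t.
\]

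Next I would normalize every composite partial composition using Lemma \ref{pcrules}, so that each surviving monomial appears in canonical nested form $t\circ_i(t\circ_j(\cdots))$; for instance $(t\circ_3(t\circ_2 t))\circ_4 t = t\circ_3(t\circ_2(t\circ_1 t))$ and $(t\circ_3(t\circ_3 t))\circ_4 t = t\circ_3((t\circ_2 t)\circ_5 t)$. I would then reduce repeatedly: the two $\ell m(\beta)$-divisible monomials $t\circ_2(t\circ_3(t\circ_2 t))$ and $t\circ_2(t\circ_3(t\circ_3 t))$ are rewritten by \eqref{mybeta2}, the $t\circ_1 t$ subtrees are rewritten by \eqref{frr}, and I would keep track of the fact that several resulting monomials are again $\ell m(\beta)$- or $\ell m(\alpha)$-divisible, so the reduction cascades. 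At each stage I would re-identify the leading monomial in path-lex order and use the fact that a tree is determined by its path sequence to detect when two monomials coincide and cancel.

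I expect the bookkeeping to be the main obstacle: the conceptual content is merely ``run Buchberger,'' but the computation fans out into many degree-$9$ monomials, and the $\beta$-rewrites feed back into further $\beta$-rewrites (for example $t\circ_3(t\circ_2(t\circ_3 t))=t\circ_3(\ell m(\beta))$ is itself reducible), so I must avoid stopping prematurely and must verify each claimed coincidence of trees. After collecting terms I anticipate heavy cancellation: every monomial of the form $t\circ_3(t\circ_2(t\circ_2 t))$ and $t\circ_3(t\circ_3(t\circ_2 t))$ will occur with total coefficient $0$, and only $t\circ_3(t\circ_3(t\circ_3 t))$ and $t\circ_3((t\circ_2 t)\circ_5 t)$ survive, yielding the normal form $-\zeta$. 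Finally I would confirm irreducibility of $\ell m(\zeta)=t\circ_3((t\circ_2 t)\circ_5 t)$: it has no internal node in first position and no node whose second child carries a third-position internal child, so it is divisible by neither $\ell m(\alpha)$ nor $\ell m(\beta)$, and since $\alpha,\beta$ have smaller arity the set $\{\alpha,\beta,\zeta\}$ is self-reduced.
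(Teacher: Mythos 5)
Your proposal is correct and follows essentially the same route as the paper: identify the arity-$9$ SCM $\ell m(\beta)\circ_4 t$, form the S-polynomial $t\circ_2(t\circ_3\alpha)-\beta\circ_4 t$, cancel the leading terms, reduce the two $\ell m(\beta)$-divisible monomials by $\beta$ and the $t\circ_1 t$ subtrees by $\alpha$, and observe that all monomials cancel except $t\circ_3(t\circ_3(t\circ_3 t))$ and $-t\circ_3((t\circ_2 t)\circ_5 t)$, giving $-\zeta$; your irreducibility check of $\ell m(\zeta)$ for self-reducedness is also sound. The only (harmless) deviation is that you allow further $\beta$-reduction of $t\circ_3(t\circ_2(t\circ_3 t))$, whereas in the paper's reduction order that term cancels outright against a term produced by the $\alpha$-reduction of $t\circ_3(t\circ_2(t\circ_1 t))$, so the cascade never occurs.
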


\begin{proof}
We have the equations
\[
( t \circ_2 t )  \circ_4  \ell m(\alpha)
=
( t \circ_2 t ) \circ_4 (t \circ_1  t)
=
\adjustbox{valign=m}{
\begin{xy}
(  3,   0 )*{} = "root";
(  0,  -4 )*{} = "l";
(  3,  -4 )*{} = "m";
(  6,  -4 )*{} = "r";
{ \ar@{-} "root"; "l" };
{ \ar@{-} "root"; "m" };
{ \ar@{-} "root"; "r" };
( 0, -8 )*{} = "ml";
(  3, -8 )*{} = "mm";
(  6, -8 )*{} = "mr";
{ \ar@{-} "m"; "mr" };
{ \ar@{-} "m"; "mm" };
{ \ar@{-} "m"; "ml" };
(  3, -12 )*{} = "mll";
(  6, -12 )*{} = "mlm";
(  9, -12 )*{} = "mlr";
{ \ar@{-} "mr"; "mll" };
{ \ar@{-} "mr"; "mlm" };
{ \ar@{-} "mr"; "mlr" };
(  0, -16 )*{} = "mlll";
(  3, -16 )*{} = "mllm";
(  6, -16 )*{} = "mllr";
{ \ar@{-} "mll"; "mlll" };
{ \ar@{-} "mll"; "mllm" };
{ \ar@{-} "mll"; "mllr" };
\end{xy}
}
=
( t \circ_2 (t  \circ_3  t)) \circ_4 t
=
\ell m(\beta) \circ_4  t.
\]
The resulting S-polynomial $( t \circ_2 t ) \circ_4 \alpha - \beta \circ_4  t$ is
\begin{align*}
&
   ( t \circ_2 t )  \circ_4 ( t \circ_1 t )
+  ( t \circ_2 t )  \circ_4 ( t \circ_2 t )
+  ( t \circ_2 t )  \circ_4 ( t \circ_3 t )
\\
& {}
- ( t \circ_2 ( t \circ_3 t ) ) \circ_4 t
-  ( t \circ_3 ( t \circ_2 t ) ) \circ_4 t
- ( t \circ_3 ( t \circ_3 t ) ) \circ_4 t.
\end{align*}
Terms 1, 4 cancel, leaving
\begin{align*}
&
  ( t \circ_2 t )  \circ_4 ( t \circ_2 t )
+ ( t \circ_2 t )  \circ_4 ( t \circ_3 t )
- ( t \circ_3 ( t \circ_2 t ) ) \circ_4 t
- ( t \circ_3 ( t \circ_3 t ) ) \circ_4 t
\\
&=
\adjustbox{valign=m}{
\begin{xy}
(  3,   0 )*{\bullet} = "root";
(  0,  -4 )*{} = "l";
(  3,  -4 )*{} = "m";
(  6,  -4 )*{} = "r";
{ \ar@{-} "root"; "l" };
{ \ar@{-} "root"; "m" };
{ \ar@{-} "root"; "r" };
(  0, -8 )*{} = "ml";
(  3, -8 )*{} = "mm";
(  6, -8 )*{} = "mr";
{ \ar@{-} "m"; "ml" };
{ \ar@{-} "m"; "mm" };
{ \ar@{-} "m"; "mr" };
(  3, -12 )*{} = "mrl";
(  6, -12 )*{} = "mrm";
(  9, -12 )*{} = "mrr";
{ \ar@{-} "mr"; "mrl" };
{ \ar@{-} "mr"; "mrm" };
{ \ar@{-} "mr"; "mrr" };
(  3, -16 )*{} = "mrml";
(  6, -16 )*{} = "mrmm";
(  9, -16 )*{} = "mrmr";
{ \ar@{-} "mrm"; "mrml" };
{ \ar@{-} "mrm"; "mrmm" };
{ \ar@{-} "mrm"; "mrmr" };
\end{xy}
}
\bigplus
\adjustbox{valign=m}{
\begin{xy}
(  3,   0 )*{} = "root";
(  0,  -4 )*{} = "l";
(  3,  -4 )*{} = "m";
(  6,  -4 )*{} = "r";
{ \ar@{-} "root"; "l" };
{ \ar@{-} "root"; "m" };
{ \ar@{-} "root"; "r" };
(  0, -8 )*{} = "ml";
(  3, -8 )*{} = "mm";
(  6, -8 )*{} = "mr";
{ \ar@{-} "m"; "ml" };
{ \ar@{-} "m"; "mm" };
{ \ar@{-} "m"; "mr" };
(  3, -12 )*{} = "mrl";
(  6, -12 )*{} = "mrm";
(  9, -12 )*{} = "mrr";
{ \ar@{-} "mr"; "mrl" };
{ \ar@{-} "mr"; "mrm" };
{ \ar@{-} "mr"; "mrr" };
(  6, -16 )*{} = "mrrl";
(  9, -16 )*{} = "mrrm";
( 12, -16 )*{} = "mrrr";
{ \ar@{-} "mrr"; "mrrl" };
{ \ar@{-} "mrr"; "mrrm" };
{ \ar@{-} "mrr"; "mrrr" };
\end{xy}
}
\bigminus
\adjustbox{valign=m}{
\begin{xy}
(  3,   0 )*{} = "root";
(  0,  -4 )*{} = "l";
(  3,  -4 )*{} = "m";
(  6,  -4 )*{} = "r";
{ \ar@{-} "root"; "l" };
{ \ar@{-} "root"; "m" };
{ \ar@{-} "root"; "r" };
(  3, -8 )*{} = "rl";
(  6, -8 )*{} = "rm";
(  9, -8 )*{} = "rr";
{ \ar@{-} "r"; "rl" };
{ \ar@{-} "r"; "rm" };
{ \ar@{-} "r"; "rr" };
( 3, -12 )*{} = "rml";
(  6, -12 )*{} = "rmm";
(  9, -12 )*{} = "rmr";
{ \ar@{-} "rm"; "rml" };
{ \ar@{-} "rm"; "rmm" };
{ \ar@{-} "rm"; "rmr" };
(  0, -16 )*{} = "rmll";
(  3, -16 )*{} = "rmlm";
(  6, -16 )*{} = "rmlr";
{ \ar@{-} "rml"; "rmll" };
{ \ar@{-} "rml"; "rmlm" };
{ \ar@{-} "rml"; "rmlr" };
\end{xy}
}
\bigminus
\adjustbox{valign=m}{
\begin{xy}
(  3,   0 )*{} = "root";
(  0,  -4 )*{} = "l";
(  3,  -4 )*{} = "m";
(  6,  -4 )*{} = "r";
{ \ar@{-} "root"; "l" };
{ \ar@{-} "root"; "m" };
{ \ar@{-} "root"; "r" };
(  0, -8 )*{} = "rl";
(  6, -8 )*{} = "rm";
(  15, -8 )*{} = "rr";
{ \ar@{-} "r"; "rl" };
{ \ar@{-} "r"; "rm" };
{ \ar@{-} "r"; "rr" };
( 12, -12 )*{} = "rrl";
(  15, -12 )*{} = "rrm";
(  18, -12 )*{} = "rrr";
{ \ar@{-} "rr"; "rrl" };
{ \ar@{-} "rr"; "rrm" };
{ \ar@{-} "rr"; "rrr" };
(  3, -12 )*{} = "rml";
(  6, -12 )*{} = "rmm";
(  9, -12 )*{} = "rmr";
{ \ar@{-} "rm"; "rml" };
{ \ar@{-} "rm"; "rmm" };
{ \ar@{-} "rm"; "rmr" };
\end{xy}
}
\end{align*}
We use $\beta$ to reduce terms 1, 2:
\begin{align*}
\bigminus
\adjustbox{valign=m}{
\begin{xy}
(  3,   0 )*{} = "root";
(  0,  -4 )*{} = "l";
(  3,  -4 )*{} = "m";
(  6,  -4 )*{} = "r";
{ \ar@{-} "root"; "l" };
{ \ar@{-} "root"; "m" };
{ \ar@{-} "root"; "r" };
(  3, -8 )*{} = "ml";
(  6, -8 )*{} = "mm";
(  9, -8 )*{} = "mr";
{ \ar@{-} "r"; "ml" };
{ \ar@{-} "r"; "mm" };
{ \ar@{-} "r"; "mr" };
( 3, -12 )*{} = "rml";
(  6, -12 )*{} = "rmm";
(  9, -12 )*{} = "rmr";
{ \ar@{-} "mm"; "rml" };
{ \ar@{-} "mm"; "rmm" };
{ \ar@{-} "mm"; "rmr" };
(  3, -16 )*{} = "rmrl";
(  6, -16 )*{} = "rmrm";
(  9, -16 )*{} = "rmrr";
{ \ar@{-} "rmm"; "rmrl" };
{ \ar@{-} "rmm"; "rmrm" };
{ \ar@{-} "rmm"; "rmrr" };
\end{xy}
}
\bigminus
\adjustbox{valign=m}{
\begin{xy}
(  3,   0 )*{} = "root";
(  0,  -4 )*{} = "l";
(  3,  -4 )*{} = "m";
(  6,  -4 )*{} = "r";
{ \ar@{-} "root"; "l" };
{ \ar@{-} "root"; "m" };
{ \ar@{-} "root"; "r" };
(  3, -8 )*{} = "rl";
(  6, -8 )*{} = "rm";
(  9, -8 )*{} = "rr";
{ \ar@{-} "r"; "rl" };
{ \ar@{-} "r"; "rm" };
{ \ar@{-} "r"; "rr" };
( 6, -12 )*{} = "rrl";
(  9, -12 )*{} = "rrm";
(  12, -12 )*{} = "rrr";
{ \ar@{-} "rr"; "rrl" };
{ \ar@{-} "rr"; "rrm" };
{ \ar@{-} "rr"; "rrr" };
(  3, -16 )*{} = "rrll";
(  6, -16 )*{} = "rrlm";
(  9, -16 )*{} = "rrlr";
{ \ar@{-} "rrl"; "rrll" };
{ \ar@{-} "rrl"; "rrlm" };
{ \ar@{-} "rrl"; "rrlr" };
\end{xy}
}
\bigminus
\adjustbox{valign=m}{
\begin{xy}
(  3,   0 )*{} = "root";
(  0,  -4 )*{} = "l";
(  3,  -4 )*{} = "m";
(  6,  -4 )*{} = "r";
{ \ar@{-} "root"; "l" };
{ \ar@{-} "root"; "m" };
{ \ar@{-} "root"; "r" };
(  3, -8 )*{} = "ml";
(  6, -8 )*{} = "mm";
(  9, -8 )*{} = "mr";
{ \ar@{-} "r"; "ml" };
{ \ar@{-} "r"; "mm" };
{ \ar@{-} "r"; "mr" };
( 3, -12 )*{} = "mrl";
(  6, -12 )*{} = "mrm";
(  9, -12 )*{} = "mrr";
{ \ar@{-} "mm"; "mrl" };
{ \ar@{-} "mm"; "mrm" };
{ \ar@{-} "mm"; "mrr" };
(  6, -16 )*{} = "mrrl";
(  9, -16 )*{} = "mrrm";
(  12, -16 )*{} = "mrrr";
{ \ar@{-} "mrr"; "mrrl" };
{ \ar@{-} "mrr"; "mrrm" };
{ \ar@{-} "mrr"; "mrrr" };
\end{xy}
}
\bigminus
\adjustbox{valign=m}{
\begin{xy}
(  3,   0 )*{} = "root";
(  0,  -4 )*{} = "l";
(  3,  -4 )*{} = "m";
(  6,  -4 )*{} = "r";
{ \ar@{-} "root"; "l" };
{ \ar@{-} "root"; "m" };
{ \ar@{-} "root"; "r" };
(  3, -8 )*{} = "rl";
(  6, -8 )*{} = "rm";
(  9, -8 )*{} = "rr";
{ \ar@{-} "r"; "rl" };
{ \ar@{-} "r"; "rm" };
{ \ar@{-} "r"; "rr" };
( 6, -12 )*{} = "rrl";
(  9, -12 )*{} = "rrm";
(  12, -12 )*{} = "rrr";
{ \ar@{-} "rr"; "rrl" };
{ \ar@{-} "rr"; "rrm" };
{ \ar@{-} "rr"; "rrr" };
(  6, -16 )*{} = "rrml";
(  9, -16 )*{} = "rrmm";
(  12, -16 )*{} = "rrmr";
{ \ar@{-} "rrm"; "rrml" };
{ \ar@{-} "rrm"; "rrmm" };
{ \ar@{-} "rrm"; "rrmr" };
\end{xy}
}
\bigminus
\adjustbox{valign=m}{
\begin{xy}
(  3,   0 )*{\bullet} = "root";
(  0,  -4 )*{} = "l";
(  3,  -4 )*{} = "m";
(  6,  -4 )*{} = "r";
{ \ar@{-} "root"; "l" };
{ \ar@{-} "root"; "m" };
{ \ar@{-} "root"; "r" };
(  3, -8 )*{} = "rl";
(  6, -8 )*{} = "rm";
(  9, -8 )*{} = "rr";
{ \ar@{-} "r"; "rl" };
{ \ar@{-} "r"; "rm" };
{ \ar@{-} "r"; "rr" };
( 3, -12 )*{} = "rml";
(  6, -12 )*{} = "rmm";
(  9, -12 )*{} = "rmr";
{ \ar@{-} "rm"; "rml" };
{ \ar@{-} "rm"; "rmm" };
{ \ar@{-} "rm"; "rmr" };
(  0, -16 )*{} = "rmll";
(  3, -16 )*{} = "rmlm";
(  6, -16 )*{} = "rmlr";
{ \ar@{-} "rml"; "rmll" };
{ \ar@{-} "rml"; "rmlm" };
{ \ar@{-} "rml"; "rmlr" };
\end{xy}
}
\bigminus
\adjustbox{valign=m}{
\begin{xy}
(  3,   0 )*{} = "root";
(  0,  -4 )*{} = "l";
(  3,  -4 )*{} = "m";
(  6,  -4 )*{} = "r";
{ \ar@{-} "root"; "l" };
{ \ar@{-} "root"; "m" };
{ \ar@{-} "root"; "r" };
(  0, -8 )*{} = "rl";
(  6, -8 )*{} = "rm";
(  15, -8 )*{} = "rr";
{ \ar@{-} "r"; "rl" };
{ \ar@{-} "r"; "rm" };
{ \ar@{-} "r"; "rr" };
( 12, -12 )*{} = "rrl";
(  15, -12 )*{} = "rrm";
(  18, -12 )*{} = "rrr";
{ \ar@{-} "rr"; "rrl" };
{ \ar@{-} "rr"; "rrm" };
{ \ar@{-} "rr"; "rrr" };
(  3, -12 )*{} = "rml";
(  6, -12 )*{} = "rmm";
(  9, -12 )*{} = "rmr";
{ \ar@{-} "rm"; "rml" };
{ \ar@{-} "rm"; "rmm" };
{ \ar@{-} "rm"; "rmr" };
\end{xy}
}
\end{align*}
We use $\alpha$ to reduce terms 2, 5:
\begin{align*}
&
\bigminus
\adjustbox{valign=m}{
\begin{xy}
(  3,   0 )*{} = "root";
(  0,  -4 )*{} = "l";
(  3,  -4 )*{} = "m";
(  6,  -4 )*{} = "r";
{ \ar@{-} "root"; "l" };
{ \ar@{-} "root"; "m" };
{ \ar@{-} "root"; "r" };
(  3, -8 )*{} = "ml";
(  6, -8 )*{} = "mm";
(  9, -8 )*{} = "mr";
{ \ar@{-} "r"; "ml" };
{ \ar@{-} "r"; "mm" };
{ \ar@{-} "r"; "mr" };
( 3, -12 )*{} = "rml";
(  6, -12 )*{} = "rmm";
(  9, -12 )*{} = "rmr";
{ \ar@{-} "mm"; "rml" };
{ \ar@{-} "mm"; "rmm" };
{ \ar@{-} "mm"; "rmr" };
(  3, -16 )*{} = "rmrl";
(  6, -16 )*{} = "rmrm";
(  9, -16 )*{} = "rmrr";
{ \ar@{-} "rmm"; "rmrl" };
{ \ar@{-} "rmm"; "rmrm" };
{ \ar@{-} "rmm"; "rmrr" };
\end{xy}
}
\bigplus
\adjustbox{valign=m}{
\begin{xy}
(  3,   0 )*{} = "root";
(  0,  -4 )*{} = "l";
(  3,  -4 )*{} = "m";
(  6,  -4 )*{} = "r";
{ \ar@{-} "root"; "l" };
{ \ar@{-} "root"; "m" };
{ \ar@{-} "root"; "r" };
(  3, -8 )*{} = "rl";
(  6, -8 )*{} = "rm";
(  9, -8 )*{} = "rr";
{ \ar@{-} "r"; "rl" };
{ \ar@{-} "r"; "rm" };
{ \ar@{-} "r"; "rr" };
( 6, -12 )*{} = "rrl";
(  9, -12 )*{} = "rrm";
(  12, -12 )*{} = "rrr";
{ \ar@{-} "rr"; "rrl" };
{ \ar@{-} "rr"; "rrm" };
{ \ar@{-} "rr"; "rrr" };
(  6, -16 )*{} = "rrml";
(  9, -16 )*{} = "rrmm";
(  12, -16 )*{} = "rrmr";
{ \ar@{-} "rrm"; "rrml" };
{ \ar@{-} "rrm"; "rrmm" };
{ \ar@{-} "rrm"; "rrmr" };
\end{xy}
}
\bigplus
\adjustbox{valign=m}{
\begin{xy}
(  3,   0 )*{} = "root";
(  0,  -4 )*{} = "l";
(  3,  -4 )*{} = "m";
(  6,  -4 )*{} = "r";
{ \ar@{-} "root"; "l" };
{ \ar@{-} "root"; "m" };
{ \ar@{-} "root"; "r" };
( 3, -8 )*{} = "rl";
(  6, -8 )*{} = "rm";
(  9, -8 )*{} = "rr";
{ \ar@{-} "r"; "rl" };
{ \ar@{-} "r"; "rm" };
{ \ar@{-} "r"; "rr" };
( 6, -12 )*{} = "rrl";
(  9, -12 )*{} = "rrm";
(  12, -12 )*{} = "rrr";
{ \ar@{-} "rr"; "rrr" };
{ \ar@{-} "rr"; "rrm" };
{ \ar@{-} "rr"; "rrl" };
(  9, -16 )*{} = "rrml";
(  12, -16 )*{} = "rrmm";
(  15, -16 )*{} = "rrmr";
{ \ar@{-} "rrr"; "rrml" };
{ \ar@{-} "rrr"; "rrmm" };
{ \ar@{-} "rrr"; "rrmr" };
\end{xy}
}
\bigminus
\adjustbox{valign=m}{
\begin{xy}
(  3,   0 )*{} = "root";
(  0,  -4 )*{} = "l";
(  3,  -4 )*{} = "m";
(  6,  -4 )*{} = "r";
{ \ar@{-} "root"; "l" };
{ \ar@{-} "root"; "m" };
{ \ar@{-} "root"; "r" };
(  3, -8 )*{} = "ml";
(  6, -8 )*{} = "mm";
(  9, -8 )*{} = "mr";
{ \ar@{-} "r"; "ml" };
{ \ar@{-} "r"; "mm" };
{ \ar@{-} "r"; "mr" };
( 3, -12 )*{} = "mrl";
(  6, -12 )*{} = "mrm";
(  9, -12 )*{} = "mrr";
{ \ar@{-} "mm"; "mrl" };
{ \ar@{-} "mm"; "mrm" };
{ \ar@{-} "mm"; "mrr" };
(  6, -16 )*{} = "mrrl";
(  9, -16 )*{} = "mrrm";
(  12, -16 )*{} = "mrrr";
{ \ar@{-} "mrr"; "mrrl" };
{ \ar@{-} "mrr"; "mrrm" };
{ \ar@{-} "mrr"; "mrrr" };
\end{xy}
}
\\
&
\bigminus
\adjustbox{valign=m}{
\begin{xy}
(  3,   0 )*{} = "root";
(  0,  -4 )*{} = "l";
(  3,  -4 )*{} = "m";
(  6,  -4 )*{} = "r";
{ \ar@{-} "root"; "l" };
{ \ar@{-} "root"; "m" };
{ \ar@{-} "root"; "r" };
(  3, -8 )*{} = "rl";
(  6, -8 )*{} = "rm";
(  9, -8 )*{} = "rr";
{ \ar@{-} "r"; "rl" };
{ \ar@{-} "r"; "rm" };
{ \ar@{-} "r"; "rr" };
( 6, -12 )*{} = "rrl";
(  9, -12 )*{} = "rrm";
(  12, -12 )*{} = "rrr";
{ \ar@{-} "rr"; "rrl" };
{ \ar@{-} "rr"; "rrm" };
{ \ar@{-} "rr"; "rrr" };
(  6, -16 )*{} = "rrml";
(  9, -16 )*{} = "rrmm";
(  12, -16 )*{} = "rrmr";
{ \ar@{-} "rrm"; "rrml" };
{ \ar@{-} "rrm"; "rrmm" };
{ \ar@{-} "rrm"; "rrmr" };
\end{xy}
}
\bigplus
\adjustbox{valign=m}{
\begin{xy}
(  3,   0 )*{} = "root";
(  0,  -4 )*{} = "l";
(  3,  -4 )*{} = "m";
(  6,  -4 )*{} = "r";
{ \ar@{-} "root"; "l" };
{ \ar@{-} "root"; "m" };
{ \ar@{-} "root"; "r" };
(  3, -8 )*{} = "ml";
(  6, -8 )*{} = "mm";
(  9, -8 )*{} = "mr";
{ \ar@{-} "r"; "ml" };
{ \ar@{-} "r"; "mm" };
{ \ar@{-} "r"; "mr" };
( 3, -12 )*{} = "rml";
(  6, -12 )*{} = "rmm";
(  9, -12 )*{} = "rmr";
{ \ar@{-} "mm"; "rml" };
{ \ar@{-} "mm"; "rmm" };
{ \ar@{-} "mm"; "rmr" };
(  3, -16 )*{} = "rmrl";
(  6, -16 )*{} = "rmrm";
(  9, -16 )*{} = "rmrr";
{ \ar@{-} "rmm"; "rmrl" };
{ \ar@{-} "rmm"; "rmrm" };
{ \ar@{-} "rmm"; "rmrr" };
\end{xy}
}
\bigplus
\adjustbox{valign=m}{
\begin{xy}
(  3,   0 )*{} = "root";
(  0,  -4 )*{} = "l";
(  3,  -4 )*{} = "m";
(  6,  -4 )*{} = "r";
{ \ar@{-} "root"; "l" };
{ \ar@{-} "root"; "m" };
{ \ar@{-} "root"; "r" };
(  3, -8 )*{} = "ml";
(  6, -8 )*{} = "mm";
(  9, -8 )*{} = "mr";
{ \ar@{-} "r"; "ml" };
{ \ar@{-} "r"; "mm" };
{ \ar@{-} "r"; "mr" };
( 3, -12 )*{} = "mrl";
(  6, -12 )*{} = "mrm";
(  9, -12 )*{} = "mrr";
{ \ar@{-} "mm"; "mrl" };
{ \ar@{-} "mm"; "mrm" };
{ \ar@{-} "mm"; "mrr" };
(  6, -16 )*{} = "mrrl";
(  9, -16 )*{} = "mrrm";
(  12, -16 )*{} = "mrrr";
{ \ar@{-} "mrr"; "mrrl" };
{ \ar@{-} "mrr"; "mrrm" };
{ \ar@{-} "mrr"; "mrrr" };
\end{xy}
}
\bigminus
\adjustbox{valign=m}{
\begin{xy}
(  3,   0 )*{} = "root";
(  0,  -4 )*{} = "l";
(  3,  -4 )*{} = "m";
(  6,  -4 )*{} = "r";
{ \ar@{-} "root"; "l" };
{ \ar@{-} "root"; "m" };
{ \ar@{-} "root"; "r" };
(  0, -8 )*{} = "rl";
(  6, -8 )*{} = "rm";
(  15, -8 )*{} = "rr";
{ \ar@{-} "r"; "rl" };
{ \ar@{-} "r"; "rm" };
{ \ar@{-} "r"; "rr" };
( 12, -12 )*{} = "rrl";
(  15, -12 )*{} = "rrm";
(  18, -12 )*{} = "rrr";
{ \ar@{-} "rr"; "rrl" };
{ \ar@{-} "rr"; "rrm" };
{ \ar@{-} "rr"; "rrr" };
(  3, -12 )*{} = "rml";
(  6, -12 )*{} = "rmm";
(  9, -12 )*{} = "rmr";
{ \ar@{-} "rm"; "rml" };
{ \ar@{-} "rm"; "rmm" };
{ \ar@{-} "rm"; "rmr" };
\end{xy}
}
\end{align*}
Terms 1, 6 and 2, 5 and 4, 7 cancel.
No further reduction is possible, giving $-\zeta$.
\end{proof}


\begin{lemma}
\label{degree9simplify}
The polynomials $\gamma$, $\delta$, $\epsilon$, $\zeta$ 
span a subspace with basis $\eta$, $\theta$, $\nu$.
\end{lemma}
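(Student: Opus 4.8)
The plan is to treat Lemma \ref{degree9simplify} as a finite linear-algebra statement inside the single graded piece $\freeternary(9)$: all seven tree polynomials $\gamma,\delta,\epsilon,\zeta$ and $\eta,\theta,\nu$ have arity $9$ and weight $4$, so it suffices to write each in a common coordinate system of weight-$4$ arity-$9$ tree monomials and compare the resulting spans. First I would record the four S-polynomials exactly as produced in Lemmas \ref{degree9case1}--\ref{degree9case4}, e.g. $\gamma = 2\,(t\circ_3(t\circ_2 t))\circ_7 t + t\circ_3(t\circ_3(t\circ_3 t))$, together with the three-term $\delta$, the four-term $\epsilon$, and the two-term $\zeta$, and likewise expand $\eta=P_1+P_2$, $\theta$, $\nu$ into their monomials.

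The key step, carrying essentially all the content, is to recognize that the superficially different partial-composition expressions occurring in these polynomials denote only a handful of distinct trees. I would compute the path sequence of each monomial and invoke the criterion that $p=q$ if and only if $\mathrm{path}(p)=\mathrm{path}(q)$, reassociating partial compositions by Lemma \ref{pcrules} where convenient. This collapses the list: the leading ``$\circ_7$'' term of $\gamma$, the ``$\circ_4$'' term of $\epsilon$, and the monomial of $\zeta$ all equal $\theta$ (path $(1,1,2,3,3,3,3,3,3)$); the fully right-nested terms equal $\nu$ (path $(1,1,2,2,3,3,4,4,4)$); and the two monomials of $\delta$ coincide with the two monomials of $\epsilon$, which in turn are exactly the two monomials $P_1,P_2$ of $\eta$ (paths $(1,2,2,2,2,3,3,3,2)$ and $(1,2,2,2,2,2,3,3,3)$). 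Hence the only trees that ever appear are $\theta$, $\nu$, $P_1$, $P_2$, and $\eta=P_1+P_2$.

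With these identifications the four polynomials read $\gamma=2\theta+\nu$, $\zeta=\theta-\nu$, $\delta=\eta+\nu$, and $\epsilon=\eta-\theta-\nu$, which gives the forward inclusion into $\langle\eta,\theta,\nu\rangle$. I would then note that $P_1$ and $P_2$ never occur separately—both $\delta$ and $\epsilon$ contain them with equal coefficient—so no ``$P_1-P_2$'' direction is reachable, and this is precisely why the dimension drops from a naive four to three. For the reverse inclusion I would solve the system explicitly over the characteristic-$0$ ground field: $\theta=\tfrac13(\gamma+\zeta)$, $\nu=\tfrac13(\gamma-2\zeta)$, and $\eta=\delta-\nu$, so each of $\eta,\theta,\nu$ lies in $\langle\gamma,\delta,\epsilon,\zeta\rangle$. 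Finally, $\{\eta,\theta,\nu\}$ is linearly independent because $\ell m(\eta)=P_1$, $\ell m(\theta)$, and $\ell m(\nu)$ are three distinct trees in path-lex order, so the three polynomials have pairwise distinct leading monomials; thus they form a basis of the common span.

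The main obstacle is purely the bookkeeping of the second paragraph: correctly matching trees presented through different grafting expressions, and in particular spotting the coincidences on which the rank count rests. Miscounting even one identification would either inflate the apparent dimension or destroy the claimed basis, so I would verify every equality against its path sequence rather than against its drawn shape, and cross-check the two expressions $\eta=\delta-\nu$ and $\eta=\epsilon+\theta+\nu$ for consistency as a final safeguard.
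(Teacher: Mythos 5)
Your proposal is correct and takes essentially the same route as the paper: identify the monomials of $\gamma,\delta,\epsilon,\zeta$ with the four trees $P_1,P_2,\theta,\nu$ occurring in $\eta,\theta,\nu$, and then finish by elementary linear algebra (your relations $\gamma=2\theta+\nu$, $\delta=\eta+\nu$, $\epsilon=\eta-\theta-\nu$, $\zeta=\theta-\nu$ and the resulting rank computation all check out). The paper merely states the inverse combinations $\eta=\tfrac13(\gamma+\delta+2\epsilon)$, $\theta=\tfrac13(2\gamma-\delta+\epsilon)$, $\nu=-\tfrac13(\gamma-2\delta+2\epsilon)$ as ``easy to see,'' whereas you invert through $\gamma,\zeta,\delta$ instead of $\gamma,\delta,\epsilon$ --- an immaterial difference --- and you supply the tree-identification and independence details the paper omits.
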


\begin{proof}
It is easy to see that
\[
\eta
=
\frac13 \big( \gamma + \delta + 2 \epsilon \big),
\qquad
\theta
=
\frac13 \big( 2 \gamma - \delta + \epsilon \big),
\qquad
\nu
=
-\frac13 \big( \gamma - 2 \delta + 2 \epsilon \big),
\]
and that these three polynomials form a basis of
$\mathrm{span}(\gamma,\delta,\epsilon,\zeta)$.
\end{proof}

\begin{lemma}
\label{degree9finished}
Every S-polynomial obtained from $\alpha$, $\beta$, $\eta$, $\theta$, $\nu$ reduces to 0.
\end{lemma}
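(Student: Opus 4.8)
The plan is to complete the Buchberger criterion for operads (the theorem closing Section 2): the set $G = \{\alpha, \beta, \eta, \theta, \nu\}$ is a Gr\"obner basis for $\langle\alpha\rangle$ exactly when, for every unordered pair $g, h \in G$ (a generator allowed to pair with itself) and every small common multiple $f$ of $\ell m(g)$ and $\ell m(h)$, the reduction of $S(f,g,h)$ by $G$ is $0$. First I would dispose of the pairs already treated. The pair $(\alpha,\alpha)$ was handled in Lemma \ref{degree7}, whose unique SCM gave $\beta \in G$; and the four overlaps of $(\alpha,\beta)$ were handled in Lemmas \ref{degree9case1}--\ref{degree9case4}, yielding $\gamma, \delta, \epsilon, \zeta$. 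By Lemma \ref{degree9simplify} these four lie in $\operatorname{span}(\eta,\theta,\nu) \subseteq \langle G\rangle$, and since $\eta,\theta,\nu$ were chosen as the reduced basis of that span with distinct leading monomials, each of $\gamma,\delta,\epsilon,\zeta$ now reduces to $0$ by $\{\eta,\theta,\nu\}$. Thus the pairs $(\alpha,\alpha)$ and $(\alpha,\beta)$ are finished.

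Next I would cut down the remaining SCMs using the operadic form of Buchberger's coprimality criterion: if the two leading trees share no internal node in $f$ --- that is, $f = \ell m(g) \circ_i \ell m(h)$ for some leaf $i$, so that $g$ and $h$ act on disjoint regions --- then the $g$-reduction and the $h$-reduction commute and $S(f,g,h)$ reduces to $0$ automatically. Hence only overlaps sharing at least one internal node need be checked by hand, and for leading monomials of weights $a$ and $b$ such an overlap has weight at most $a+b-1$. Since $\ell m(\alpha), \ell m(\beta)$ have weights $2$ and $3$ while $\ell m(\eta), \ell m(\theta), \ell m(\nu)$ have weight $4$, every remaining genuine overlap has weight at most $7$, that is, arity at most $15$. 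The pairs still to treat are $(\beta,\beta)$, the pairs of $\alpha$ and of $\beta$ with each of $\eta,\theta,\nu$, and the six pairs among $\eta,\theta,\nu$. For each I would enumerate the genuine overlaps, form $S(f,g,h) = M(f,\ell m(g),g) - M(f,\ell m(h),h)$ as in the earlier lemmas, normalize the partial compositions by Lemma \ref{pcrules}, and reduce using the rewrite rules carried by $\alpha$, $\beta$, $\eta$, $\theta$, $\nu$. The engine of each reduction is that $\ell m(\alpha) = t \circ_1 t$ is the smallest leading tree, so intermediate monomials are repeatedly divisible by it; applying \eqref{frr}, then \eqref{mybeta2}, then the weight-$4$ rules, collapses each S-polynomial to $0$ through pairwise cancellation of trees, exactly as in the degree-$7$ and degree-$9$ computations above. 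The even-degree hypothesis keeps every coefficient sign-free, so these are ordinary integer cancellations.

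The main obstacle is twofold. First, logical correctness rests on the enumeration of genuine overlaps being exhaustive: for the three weight-$4$ generators one must list every superposition of two leading trees in which each is a subtree, their internal nodes share at least one node, and together they cover $f$; omitting a single overlap would leave the criterion unverified. I would organize this enumeration by the arity of $f$, running through $11$, $13$, $15$, and within each arity by the position and size of the shared block. Second, the genuine overlaps among $\eta, \theta, \nu$ at arity $15$ expand into many monomials, so the real labor is the bookkeeping that confirms each reduction is confluent to $0$. As a safeguard against an enumeration or arithmetic slip, I would independently compute $\dim\langle\alpha\rangle(n)$ for $n = 11, 13, 15$ by row-reducing the matrix of all partial-composition images of $\alpha$ in arity $n$, and check that its codimension equals the number of monomials divisible by none of the five leading trees; equality in these arities confirms that no further leading monomial, hence no new generator, has been missed and that $G$ is complete.
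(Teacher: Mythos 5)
Your framework is sound as far as it goes: the Buchberger criterion is the right engine, the disposal of the pairs $(\alpha,\alpha)$ and $(\alpha,\beta)$ via Lemmas \ref{degree7}--\ref{degree9case4} is legitimate, your observation that each of $\gamma,\delta,\epsilon,\zeta$ reduces to $0$ by $\{\eta,\theta,\nu\}$ (because they lie in the span of three polynomials with distinct leading monomials, Lemma \ref{degree9simplify}) is correct, and the coprimality/disjoint-occurrence criterion you invoke is a standard valid fact. The genuine gap is that after this setup you never perform a single one of the remaining reductions, and your justification for skipping them is circular: you assert that applying \eqref{frr}, \eqref{mybeta2} and the weight-$4$ rules ``collapses each S-polynomial to $0$ \ldots exactly as in the degree-$7$ and degree-$9$ computations above.'' But whether a given element of the ideal reduces to $0$ under this rewriting system is exactly what the lemma asserts and is exactly what fails when a basis is not Gr\"obner; it cannot be inferred from the fact that earlier, different S-polynomials happened to reduce. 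The remaining overlaps live in arities up to $15$, where there are $7752$ tree monomials, so ``the same cancellation as before'' is not self-certifying. Your fallback --- rank computations of $\langle\alpha\rangle(n)$ for $n=11,13,15$ compared against the count of normal monomials --- would indeed be a valid completeness certificate, but you present it only as a safeguard and produce no numbers, so it does not close the gap either.

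You also miss the one structural observation that makes the lemma provable in reasonable space, and which is the heart of the paper's proof: $\theta$ and $\nu$ are \emph{monomials}, and the paper begins by noting that every S-polynomial formed with a monomial generator reduces to $0$ (reduction by a monomial annihilates entire terms rather than rewriting them). This disposes at one stroke of all nine pairs involving $\theta$ or $\nu$ --- in particular almost all of the arity-$15$ overlaps among the weight-$4$ generators that you single out as the main bookkeeping burden --- leaving only $(\alpha,\eta)$, $(\beta,\eta)$, $(\eta,\eta)$ (plus $(\beta,\beta)$, which you rightly include and which the paper's own enumeration arguably glosses over). The paper then carries out the hardest such subcase in complete detail, with the triangle lemma \cite[Prop.~3.5.3.2]{BD} available to shorten the others. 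Without the monomial observation your case list is an order of magnitude larger than necessary, and without executing any reduction (or the rank check), what you have is a correct strategy outline, not a proof of the lemma.
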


\begin{proof}
If either $f$ or $g$ is a monomial then clearly every S-polynomial obtained from $f$ and $g$ reduces to 0.
We have already considered S-polynomials from $\alpha$ and $\beta$;
the other cases are $\alpha$, $\eta$ and $\beta$, $\eta$ and $\eta$, $\eta$
with many subcases.
We give details for the most difficult subcase and leave the others as exercises.
These calculations can be simplified using the triangle lemma for nonsymmetric operads
\cite[Prop.~3.5.3.2]{BD}.

We identify the second $t$ of $\ell m ( \alpha )$ with the first $t$ of $\ell m ( \eta )$
and obtain this SCM:
\[
\ell m ( \alpha ) = t \circ_1 t,
\quad
\ell m ( \eta ) = ( t \circ_2 t ) \circ_5 ( t \circ_2 t ),
\quad
( \ell m ( \alpha ) \circ_2 t ) \circ_5 ( t \circ_2 t )
=
t \circ_1 \ell m ( \eta ).
\]
To save space we switch to nonassociative notation.
We obtain the S-polynomial
\[
\begin{array}{l}
( \alpha \circ_2 t ) \circ_5 ( t \circ_2 t )
-
t \circ_1 \eta 
=
\\[1mm]
({\ast}(({\ast}{\ast}{\ast})({\ast}({\ast}{\ast}{\ast}){\ast}){\ast}){\ast}) +
({\ast}({\ast}{\ast}{\ast})(({\ast}({\ast}{\ast}{\ast}){\ast}){\ast}{\ast})) -
(({\ast}({\ast}{\ast}{\ast})({\ast}{\ast}({\ast}{\ast}{\ast})){\ast}{\ast}).
\end{array}
\]
Rewrite rules \eqref{frr} and \eqref{mybeta2} have this form;
the letters represent submonomials:
\begin{align}
\label{anew}
((vwx)yz) \; &\longmapsto \; {} - (v(wxy)z) - (vw(xyz)),
\\
\label{bnew}
(t(uv(wxy))z) \; &\longmapsto \; {} - (tu(v(wxy)z)) - (tu(vw(xyz))).
\end{align}
When we apply \eqref{anew} or \eqref{bnew} we use bars to indicate the submonomials.
To begin we reduce all three monomials in the S-polynomial using $\alpha$ and obtain
\begin{align*}
&
({\ast}((\bar{\ast}\bar{\ast}\bar{\ast})\overline{({\ast}({\ast}{\ast}{\ast}){\ast})}\bar{\ast}){\ast}) +
({\ast}({\ast}{\ast}{\ast})((\bar{\ast}\overline{({\ast}{\ast}{\ast})}\bar{\ast})\bar{\ast}\bar{\ast})) -
((\bar{\ast}\overline{({\ast}{\ast}{\ast})}\,\overline{({\ast}{\ast}({\ast}{\ast}{\ast}))}\bar{\ast}\bar{\ast})
=
\\
&
- ({\ast}({\ast}({\ast}{\ast}({\ast}({\ast}{\ast}{\ast}){\ast})){\ast}){\ast})
- ({\ast}({\ast}{\ast}({\ast}({\ast}({\ast}{\ast}{\ast}){\ast}){\ast})){\ast})
- ({\ast}({\ast}{\ast}{\ast})({\ast}((\bar{\ast}\bar{\ast}\bar{\ast})\bar{\ast}\bar{\ast}){\ast}))
\\
&
- ({\ast}({\ast}{\ast}{\ast})({\ast}({\ast}{\ast}{\ast})({\ast}{\ast}{\ast})))
+ ({\ast}((\bar{\ast}\bar{\ast}\bar{\ast})\overline{({\ast}{\ast}({\ast}{\ast}{\ast}))}\bar{\ast}){\ast})
+ ({\ast}({\ast}{\ast}{\ast})((\bar{\ast}\bar{\ast}\overline{({\ast}{\ast}{\ast})})\bar{\ast}\bar{\ast})).
\end{align*}
Terms 3, 5, 6 reduce using $\alpha$ as indicated; term 4 is $\theta \circ_2 t$ and reduces to 0:
\begin{align*}
&{}
-
({\ast}(\bar{\ast}(\bar{\ast}\bar{\ast}(\bar{\ast}(\overline{{\ast}{\ast}{\ast}})\bar{\ast})\bar{\ast}){\ast})
-
(\bar{\ast}(\bar{\ast}\bar{\ast}(\bar{\ast}(\overline{{\ast}({\ast}{\ast}{\ast}){\ast}})\bar{\ast}))\bar{\ast})
+
({\ast}({\ast}{\ast}{\ast})({\ast}({\ast}({\ast}{\ast}{\ast}){\ast}){\ast}))
\\
&{}
+
({\ast}({\ast}{\ast}{\ast})(\bar{\ast}(\bar{\ast}\bar{\ast}(\bar{\ast}\bar{\ast}\bar{\ast}))\bar{\ast}))
-
({\ast}(\bar{\ast}(\bar{\ast}\bar{\ast}(\bar{\ast}\bar{\ast}(\overline{{\ast}{\ast}{\ast}}))\bar{\ast}){\ast})
-
(\bar{\ast}(\bar{\ast}\bar{\ast}(\bar{\ast}(\overline{{\ast}{\ast}({\ast}{\ast}{\ast})})\bar{\ast})\bar{\ast})
\\
&{}
-
({\ast}({\ast}{\ast}{\ast})({\ast}({\ast}({\ast}{\ast}{\ast}){\ast}){\ast}))
-
({\ast}({\ast}{\ast}{\ast})({\ast}{\ast}({\ast}{\ast}{\ast}){\ast}){\ast})).
\end{align*}
Terms 3, 7 cancel, and terms 1, 2, 4, 5, 6 reduce using $\beta$ as indicated:
\begin{align*}
&
(\bar{\ast}(\bar{\ast}\bar{\ast}(\bar{\ast}(\overline{{\ast}({\ast}{\ast}{\ast}){\ast}}))\bar{\ast})\bar{\ast})
+
(\bar{\ast}(\bar{\ast}\bar{\ast}(\bar{\ast}\bar{\ast}(\overline{({\ast}{\ast}{\ast}){\ast}{\ast}}))){\bar\ast})
+
({\ast}{\ast}({\ast}({\ast}({\ast}({\ast}{\ast}{\ast}){\ast}){\ast}){\ast}))
\\
&{}
+
({\ast}{\ast}({\ast}{\ast}(({\ast}({\ast}{\ast}{\ast}){\ast}){\ast}{\ast}))
-
({\ast}({\ast}{\ast}{\ast})({\ast}{\ast}({\ast}({\ast}{\ast}{\ast}){\ast})))
-
({\ast}({\ast}{\ast}{\ast})({\ast}{\ast}({\ast}{\ast}({\ast}{\ast}{\ast}))))
\\
&{}
+
(\bar{\ast}(\bar{\ast}\bar{\ast}(\bar{\ast}(\overline{{\ast}{\ast}({\ast}{\ast}{\ast})})\bar{\ast}))\bar{\ast})
+
(\bar{\ast}(\bar{\ast}\bar{\ast}(\bar{\ast}\bar{\ast}(\overline{{\ast}({\ast}{\ast}{\ast}){\ast}}))\bar{\ast})
+
({\ast}{\ast}({\ast}({\ast}({\ast}{\ast}({\ast}{\ast}{\ast})){\ast})){\ast})
\\
&{}
+
({\ast}{\ast}({\ast}{\ast}(({\ast}{\ast}({\ast}{\ast}{\ast})){\ast}{\ast})))
-
({\ast}({\ast}{\ast}{\ast})({\ast}{\ast}(({\ast}{\ast}{\ast}){\ast}{\ast}))).
\end{align*}
Terms 1, 2, 7, 8 reduce using $\beta$ as indicated; term 6 is $\nu \circ_2 t$ and reduces to 0;
omitting terms which cancel, we obtain
\begin{align*}
&
({\ast}{\ast}({\ast}({\ast}{\ast}((\bar{\ast}\bar{\ast}\bar{\ast}){\bar\ast}\bar{\ast})){\ast}))
+
({\ast}{\ast}({\ast}{\ast}({\ast}((\bar{\ast}\bar{\ast}\bar{\ast})\bar{\ast}\bar{\ast}){\ast})))
-
({\ast}({\ast}{\ast}{\ast})({\ast}{\ast}({\ast}({\ast}{\ast}{\ast}){\ast})))
\\
&{}
-
({\ast}{\ast}({\ast}({\ast}({\ast}{\ast}({\ast}{\ast}{\ast})){\ast}){\ast}))
-
({\ast}{\ast}({\ast}({\ast}{\ast}({\ast}({\ast}{\ast}{\ast}){\ast})){\ast}))
-
({\ast}{\ast}({\ast}{\ast}({\ast}({\ast}({\ast}{\ast}{\ast})){\ast}){\ast}))
\\
&{}
+
({\ast}{\ast}({\ast}{\ast}(({\ast}{\ast}({\ast}{\ast}{\ast})){\ast}{\ast})))
-
({\ast}({\ast}{\ast}{\ast})({\ast}{\ast}((\bar{\ast}\bar{\ast}\bar{\ast})\bar{\ast}\bar{\ast}))).
\end{align*}
Terms 1, 2, 8 reduce using $\alpha$ as indicated; omitting terms which cancel, we obtain:
\begin{align*}
&{}
-2
({\ast}{\ast}(\bar{\ast}(\bar{\ast}\bar{\ast}(\bar{\ast}(\overline{{\ast}{\ast}{\ast}}){\bar\ast})){\bar\ast}))
-2
({\ast}{\ast}({\ast}{\ast}({\ast}({\ast}({\ast}{\ast}{\ast}){\ast}){\ast})))
-
({\ast}{\ast}(\bar{\ast}(\bar{\ast}\bar{\ast}(\bar{\ast}\bar{\ast}(\overline{{\ast}{\ast}{\ast}}))){\bar\ast}))
\\
&{}
-
({\ast}{\ast}({\ast}{\ast}(\bar{\ast}(\bar{\ast}\bar{\ast}(\bar{\ast}\bar{\ast}\bar{\ast}))\bar{\ast}))).
\end{align*}
Terms 1, 3, 4 reduce using $\beta$ as indicated.
Some terms cancel, and others reduce to 0 using $\nu$, leaving the single term
$({\ast}{\ast}({\ast}{\ast}(\bar{\ast}(\bar{\ast}\bar{\ast}(\bar{\ast}\bar{\ast}\bar{\ast}))\bar{\ast})))$.
We reduce using $\beta$ and then both terms reduce to 0 using $\nu$.
This completes the proof of Theorem \ref{eventheorem}.
\end{proof}


We use Theorem \ref{eventheorem} to calculate the dimensions of
the homogeneous components of the ternary partially associative operad
$\tpa = \freeternary / \langle \alpha \rangle$
with an operation of even (homological) degree.
Theorem \ref{evendimensionformula} below implies the conditional result of Goze \& Remm
\cite[Theorem 15]{GR}; our proof using Gr\"obner bases is much simpler.
For a more general conjecture, see \cite[Conjecture 10.3.2.6, case 6]{BD}.

\begin{lemma}
\label{arity1357}
For $n = 1, 3, 5, 7$ we have
\[
\dim\tpa(1) = \dim\tpa(3) = 1,
\qquad
\dim\tpa(5) = 2,
\qquad
\dim\tpa(7) = 4.
\]
For $\tpa(5)$ a monomial basis in increasing path-lex order is
\[
T_1 =
\adjustbox{valign=m}{
\begin{xy}
(  0,  0 )*{} = "root";
( -3, -4 )*{} = "l";
(  0, -4 )*{} = "m";
(  3, -4 )*{} = "r";
{ \ar@{-} "root"; "l" };
{ \ar@{-} "root"; "m" };
{ \ar@{-} "root"; "r" };
(  0, -8 )*{} = "rl";
(  3, -8 )*{} = "rm";
(  6, -8 )*{} = "rr";
{ \ar@{-} "r"; "rl" };
{ \ar@{-} "r"; "rm" };
{ \ar@{-} "r"; "rr" };
\end{xy}
}
\qquad
,
\qquad
T_2 =
\adjustbox{valign=m}{
\begin{xy}
(  0,  0 )*{} = "root";
( -3, -4 )*{} = "l";
(  0, -4 )*{} = "m";
(  3, -4 )*{} = "r";
{ \ar@{-} "root"; "l" };
{ \ar@{-} "root"; "m" };
{ \ar@{-} "root"; "r" };
( -3, -8 )*{} = "ml";
(  0, -8 )*{} = "mm";
(  3, -8 )*{} = "mr";
{ \ar@{-} "m"; "ml" };
{ \ar@{-} "m"; "mm" };
{ \ar@{-} "m"; "mr" };
\end{xy}
}
\]
For $\tpa(7)$ a monomial basis in increasing path-lex order is
\[
T_3 =
\adjustbox{valign=m}{
\begin{xy}
(  0,  0 )*{} = "root";
( -3, -4 )*{} = "l";
(  0, -4 )*{} = "m";
(  3, -4 )*{} = "r";
{ \ar@{-} "root"; "l" };
{ \ar@{-} "root"; "m" };
{ \ar@{-} "root"; "r" };
(  0, -8 )*{} = "rl";
(  3, -8 )*{} = "rm";
(  6, -8 )*{} = "rr";
{ \ar@{-} "r"; "rl" };
{ \ar@{-} "r"; "rm" };
{ \ar@{-} "r"; "rr" };
(  3, -12 )*{} = "rrl";
(  6, -12 )*{} = "rrm";
(  9, -12 )*{} = "rrr";
{ \ar@{-} "rr"; "rrl" };
{ \ar@{-} "rr"; "rrm" };
{ \ar@{-} "rr"; "rrr" };
\end{xy}
}
\quad
,
\quad
T_4 =
\adjustbox{valign=m}{
\begin{xy}
(  0,  0 )*{} = "root";
( -3, -4 )*{} = "l";
(  0, -4 )*{} = "m";
(  3, -4 )*{} = "r";
{ \ar@{-} "root"; "l" };
{ \ar@{-} "root"; "m" };
{ \ar@{-} "root"; "r" };
(  0, -8 )*{} = "rl";
(  3, -8 )*{} = "rm";
(  6, -8 )*{} = "rr";
{ \ar@{-} "r"; "rl" };
{ \ar@{-} "r"; "rm" };
{ \ar@{-} "r"; "rr" };
(  0, -12 )*{} = "rml";
(  3, -12 )*{} = "rmm";
(  6, -12 )*{} = "rmr";
{ \ar@{-} "rm"; "rml" };
{ \ar@{-} "rm"; "rmm" };
{ \ar@{-} "rm"; "rmr" };
\end{xy}
}
\quad
,
\quad
T_5 =
\adjustbox{valign=m}{
\begin{xy}
(  0,  0 )*{} = "root";
( -9, -4 )*{} = "l";
(  0, -4 )*{} = "m";
(  9, -4 )*{} = "r";
{ \ar@{-} "root"; "l" };
{ \ar@{-} "root"; "m" };
{ \ar@{-} "root"; "r" };
( -3, -8 )*{} = "ml";
(  0, -8 )*{} = "mm";
(  3, -8 )*{} = "mr";
{ \ar@{-} "m"; "ml" };
{ \ar@{-} "m"; "mm" };
{ \ar@{-} "m"; "mr" };
(  6, -8 )*{} = "rl";
(  9, -8 )*{} = "rm";
( 12, -8 )*{} = "rr";
{ \ar@{-} "r"; "rl" };
{ \ar@{-} "r"; "rm" };
{ \ar@{-} "r"; "rr" };
\end{xy}
}
\quad
,
\quad
T_6 =
\adjustbox{valign=m}{
\begin{xy}
(  0,  0 )*{} = "root";
( -3, -4 )*{} = "l";
(  0, -4 )*{} = "m";
(  3, -4 )*{} = "r";
{ \ar@{-} "root"; "l" };
{ \ar@{-} "root"; "m" };
{ \ar@{-} "root"; "r" };
( -3, -8 )*{} = "ml";
(  0, -8 )*{} = "mm";
(  3, -8 )*{} = "mr";
{ \ar@{-} "m"; "ml" };
{ \ar@{-} "m"; "mm" };
{ \ar@{-} "m"; "mr" };
( -3, -12 )*{} = "mml";
(  0, -12 )*{} = "mmm";
(  3, -12 )*{} = "mmr";
{ \ar@{-} "mm"; "mml" };
{ \ar@{-} "mm"; "mmm" };
{ \ar@{-} "mm"; "mmr" };
\end{xy}
}
\]
\end{lemma}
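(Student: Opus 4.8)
The plan is to read off the dimensions directly from Theorem \ref{eventheorem} using the fundamental property of operadic Gr\"obner bases. Once $G = \{\alpha,\beta,\eta,\theta,\nu\}$ is known to be a Gr\"obner basis for $\langle\alpha\rangle$, the images in $\tpa = \freeternary/\langle\alpha\rangle$ of the \emph{normal} monomials---those $p \in \trees$ not divisible by any of $\ell m(\alpha),\ell m(\beta),\ell m(\eta),\ell m(\theta),\ell m(\nu)$---form a basis of $\tpa$; see \cite[\S3.4]{BD} and \cite[\S3.1]{DV}. So computing $\dim\tpa(n)$ reduces to counting the ternary trees of arity $n$ that avoid every leading monomial of $G$. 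Since $\ell m(\eta)$, $\ell m(\theta)$, $\ell m(\nu)$ all have weight $4$ (arity $9$), none can occur as a subtree of a tree of arity $n \le 7$; hence for the stated $n$ only $\ell m(\alpha) = t \circ_1 t$ (weight $2$) and $\ell m(\beta) = t \circ_2 (t \circ_3 t)$ (weight $3$) matter. I would record the useful reformulation that $p$ is divisible by $\ell m(\alpha)$ exactly when some internal node of $p$ has an internal node as its leftmost child.

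First I would dispose of the small cases. The sets $\trees(1)$ and $\trees(3)$ each consist of a single tree (the bare leaf and the basic tree $t$, of weights $0$ and $1$), neither of which can contain a subtree of weight $\ge 2$, so $\dim\tpa(1)=\dim\tpa(3)=1$. For $n=5$ the three weight-$2$ trees are $t \circ_1 t$, $t \circ_2 t$, $t \circ_3 t$, and only $t \circ_1 t = \ell m(\alpha)$ is excluded; computing $\mathrm{path}(t \circ_3 t) = (1,1,2,2,2)$ and $\mathrm{path}(t \circ_2 t) = (1,2,2,2,1)$ gives $t \circ_3 t \prec t \circ_2 t$, so $T_1 = t \circ_3 t$ and $T_2 = t \circ_2 t$ list the basis in increasing path-lex order and $\dim\tpa(5)=2$.

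The substantive case is $n=7$, where $\trees(7)$ has $\tfrac{1}{7}\binom{9}{3}=12$ trees. I would organize these by the placement of the two non-root internal nodes: either both are children of the root (three trees, one per choice of the root-slot holding the remaining leaf), or one is a child of the root in slot $i$ and the other is a child of that node in slot $j$ (nine trees, indexed by $(i,j) \in \{1,2,3\}^2$). Avoidance of $\ell m(\alpha)$---no internal node with an internal leftmost child---keeps exactly the first-family tree whose leaf sits in slot $1$, and in the second family exactly those with $i,j \in \{2,3\}$, leaving $1+4=5$ trees. Finally $\ell m(\beta)$ has weight $3$, so it divides a weight-$3$ tree only by equalling it, namely the second-family tree with $(i,j)=(2,3)$; discarding that one leaves four survivors. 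Computing their path sequences identifies them with $T_3,T_4,T_5,T_6$ and verifies the claimed increasing path-lex order, giving $\dim\tpa(7)=4$.

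The argument is conceptually immediate from Theorem \ref{eventheorem}, and I expect no real obstacle: the only part that requires care is the finite but slightly fiddly enumeration at $n=7$, where one must check divisibility by $\ell m(\alpha)$ and $\ell m(\beta)$ across all twelve trees and then sort the four normal monomials by path-lex order. Everything else rests on the standard normal-monomial basis for an operadic Gr\"obner-basis quotient.
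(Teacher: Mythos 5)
Your proposal is correct and is essentially the paper's own argument: both rest on the normal-monomial basis theorem for operadic Gr\"obner bases and reduce to checking which trees of arity $5$ and $7$ avoid $\ell m(\alpha)$ and $\ell m(\beta)$ (the leading monomials of $\eta$, $\theta$, $\nu$ having weight $4$ and hence being irrelevant), the only difference being that you enumerate the twelve trees of $\trees(7)$ directly, while the paper generates them as partial compositions $T_i \circ_j t$ and $t \circ_j T_i$ of the arity-$5$ basis monomials with $t$. As a side remark, your identification $T_1 = t \circ_3 t$, $T_2 = t \circ_2 t$ is the one consistent with the figures and with increasing path-lex order; the paper's proof text transposes the two, which is a typo.
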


\begin{proof}
The case $n = 1$ is trivial, and for $n = 3$ we have only the basic tree $t$.
For $n = 5$, the monomial $t \circ_1 t$ reduces by $\alpha$,
leaving only $T_1 = t \circ_2 t$ and $T_2 = t \circ_3 t$.
For $n = 7$, we have 
(i)
$T_1 \circ_i t$:
if $i = 1, 3$ the result reduces by $\alpha$,
and if $i = 2, 4, 5$ we obtain $T_5$, $T_4$, $T_3$;
(ii)
$t \circ_i T_1$:
if $i = 1, 2$ the result reduces by $\alpha$, $\beta$,
and if $i = 3$ we obtain $T_3$;
(iii)
$T_2 \circ_i t$:
if $i = 1, 2$ the result reduces by $\alpha$,
if $i = 3$ we obtain $T_6$,
if $i = 4$ the result reduces by $\beta$,
and if $i = 5$ we obtain $T_5$;
(iv)
$t \circ_i T_2$:
if $i = 1$ the result reduces by $\alpha$,
if $i = 2, 3$ we obtain $T_6$, $T_4$.
Clearly $T_3, \dots, T_6$ cannot be reduced using $\alpha$ or $\beta$,
which proves linear independence.
\end{proof}

\begin{theorem}
\label{evendimensionformula}
For weight $k \ge 3$ we have $\dim \tpa( 2k{+}1 ) = k{+}1$.
\end{theorem}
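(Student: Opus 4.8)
The plan is to read off a monomial basis of $\tpa$ from the Gr\"obner basis of Theorem~\ref{eventheorem} and then to count, continuing the computation begun in Lemma~\ref{arity1357}. By the Gr\"obner basis theory for nonsymmetric operads, the images of the \emph{normal monomials} of arity $2k{+}1$ --- those ternary trees of weight $k$ that are divisible by none of $\ell m(\alpha)$, $\ell m(\beta)$, $\ell m(\eta)$, $\ell m(\theta)$, $\ell m(\nu)$ --- form a basis of $\tpa(2k{+}1)$ (recall a ternary tree of weight $w$ has arity $2w{+}1$). Hence it suffices to count, for $k\ge 3$, the ternary trees of weight $k$ avoiding all five leading monomials. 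The relevant leading monomials are $\ell m(\alpha)=t\circ_1 t$, $\ell m(\beta)=t\circ_2(t\circ_3 t)$, $\ell m(\eta)=(t\circ_2 t)\circ_5(t\circ_2 t)$, while $\theta$ and $\nu$ are themselves monomials (the last two trees displayed in Theorem~\ref{eventheorem}).

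First I would translate ``avoids $\ell m(g)$'' into a statement about an internal node and its children at positions $1,2,3$. Avoidance of $\alpha$ says every internal node has a leaf at position $1$, so I may classify each internal node by which of its children $2,3$ are internal. Avoidance of $\beta$ then says a node occurring as a position-$2$ child has a leaf at position $3$; avoidance of $\theta$ says a node occurring as a position-$3$ child does not have both children internal. Together these force that \emph{only the root may have two internal children}, and in particular every non-root internal node has at most one internal child. A propagation argument shows that along any downward chain, once an edge enters a position-$2$ child all subsequent edges do too, so each chain reads as a block of position-$3$ edges followed by a block of position-$2$ edges. Avoidance of $\nu$ caps the number of consecutive position-$3$ edges descending from the root (no chain of four internal nodes via position-$3$), and avoidance of $\eta$ forces the root's position-$3$ child, when the root branches, to carry a leaf at position $2$.

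With this structural description the count is immediate. If the root has at most one internal child, the tree is a single chain of $k$ internal nodes whose edge-labels form $3^{\,j}2^{\,k-1-j}$; the $\nu$-bound forces $j\in\{0,1,2\}$, giving $3$ trees for $k\ge 3$ (these are $T_6,T_4,T_3$ when $k=3$). If the root branches (type $\{2,3\}$), the tree is determined by the length $a\ge1$ of its position-$2$ comb and by the node-count $c\ge1$ of its position-$3$ arm: the $\eta$- and $\nu$-constraints make the arm begin with at most one further position-$3$ edge and then a position-$2$ comb, so its shape is unique for each $c$. With $1+a+c=k$ there are $k-2$ admissible pairs $(a,c)$ (the single tree $T_5$ when $k=3$). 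Summing yields $3+(k-2)=k+1$.

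The crux is the second step: proving that avoidance of the five specific patterns is \emph{exactly equivalent} to the structural description, in both directions. The subtle points are the distinguished role of the root as the only node permitted two internal children and the interplay of the $\beta$-, $\eta$-, $\theta$-conditions with the position-$3$ spine bound from $\nu$; necessity follows from the propagation argument, and sufficiency by checking directly that each tree of the stated form contains none of the five subtrees. As a consistency check, the hypothesis $k\ge 3$ is sharp: at $k=2$ the label $j=2$ and the entire branching family disappear, leaving $2=\dim\tpa(5)$ rather than $k{+}1=3$, matching Lemma~\ref{arity1357}.
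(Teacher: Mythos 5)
Your proof is correct, and it reaches the theorem by a genuinely different route than the paper. The paper also works from the Gr\"obner basis of Theorem~\ref{eventheorem}, but it never classifies the normal monomials directly: it writes down the $k{+}1$ candidate monomials $f_1,\dots,f_{k+1}$ explicitly (built from the combs $M_\ell$), gets linear independence from irreducibility (the same normal-form fact you invoke), and proves spanning by induction on the weight $k$, the inductive step being a case analysis showing that every composition $t \circ_j f_i$ or $f_i \circ_j t$ either reduces by one of $\alpha,\beta,\eta,\theta,\nu$ or equals one of the weight-$(k{+}1)$ basis monomials $f'_j$. You instead invoke the full normal-form theorem (normal monomials give a basis of the quotient) and then classify the trees avoiding the five leading monomials by a direct structural argument: position-1 children are leaves (from $\alpha$), only the root may have two internal children (from $\beta$ and $\theta$), chains consist of a block of position-3 edges followed by a block of position-2 edges (from $\beta$), at most two consecutive position-3 edges occur (from $\nu$), and a branching root's position-3 child carries a leaf in position 2 (from $\eta$). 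Your count $3+(k-2)=k+1$ identifies exactly the same trees as the paper's basis: your three chains with $j=0,1,2$ are $f_{k+1}$, $f_2$, $f_1$, and your branched trees with $a=1,\dots,k-2$ are $f_3,\dots,f_k$. What your approach buys is a transparent, non-inductive enumeration that explains conceptually where the number $k{+}1$ comes from; what the paper's approach buys is an explicit recursion relating the bases in consecutive weights, without having to establish a two-way equivalence between pattern avoidance and a structural description. The one point to write out scrupulously in your version is that sufficiency direction (each tree of your stated form avoids all five patterns, including $\theta$ and $\nu$ inside the position-3 arm); it is routine, as you say, but the count is exact only with both directions in hand.
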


\begin{proof}
Let $M_0$ be the tree with only one vertex,
set $M_1 = t$, and for $\ell \ge 2$ define
\[
M_\ell = t \circ_2 ( t \circ_2 ( t \circ_2 \cdots ( t \circ_2 t ) \cdots ) )
\qquad
\text{($\ell$ copies of $t$)}.
\]
Consider $k{+}1$ monomials of weight $k$ in increasing path-lex order
where $\boxed{\ell} = M_\ell$:
\[
\begin{array}{c@{\qquad}c@{\qquad}c@{\qquad}c@{\qquad}c}
f_1 & f_2 & f_i \; (3 \le i \le k{-}1) & f_k & f_{k+1}
\\[1mm]
(k,3,k{-}2) & (k,2,k{-}1) & (k,3,k{-}2) & (k,2,k{-}1) & (k,1,k)
\\[1mm]
\adjustbox{valign=t}{
\begin{xy}
(  0,  0 )*{} = "root";
( -3, -4 )*{} = "l";
(  0, -4 )*{} = "m";
(  3, -4 )*{} = "r";
{ \ar@{-} "root"; "l" };
{ \ar@{-} "root"; "m" };
{ \ar@{-} "root"; "r" };
(  0, -8 )*{} = "rl";
(  3, -8 )*{} = "rm";
(  9, -8 )*{} = "rr";
{ \ar@{-} "r"; "rl" };
{ \ar@{-} "r"; "rm" };
{ \ar@{-} "r"; "rr" };
(  9, -11 )*{\boxed{\!k{-}2\!}} = "*";
\end{xy}
}
&
\adjustbox{valign=t}{
\begin{xy}
(  0,  0 )*{} = "root";
( -3, -4 )*{} = "l";
(  0, -4 )*{} = "m";
(  6, -4 )*{} = "r";
{ \ar@{-} "root"; "l" };
{ \ar@{-} "root"; "m" };
{ \ar@{-} "root"; "r" };
(  6, -7 )*{\boxed{\!k{-}1\!}} = "*";
\end{xy}
}
&
\adjustbox{valign=t}{
\begin{xy}
(  0,  0 )*{} = "root";
( -6, -4 )*{} = "l";
(  0, -4 )*{} = "m";
(  9, -4 )*{} = "r";
(  0, -7 )*{\boxed{\!i{-}2\!}} = "*";
{ \ar@{-} "root"; "l" };
{ \ar@{-} "root"; "m" };
{ \ar@{-} "root"; "r" };
(  6, -8 )*{} = "rl";
(  9, -8 )*{} = "rm";
( 15, -8 )*{} = "rr";
{ \ar@{-} "r"; "rl" };
{ \ar@{-} "r"; "rm" };
{ \ar@{-} "r"; "rr" };
( 15, -11 )*{\boxed{\!k{-}i\!}} = "*";
\end{xy}
}
&
\adjustbox{valign=t}{
\begin{xy}
(  0,  0 )*{} = "root";
( -6, -4 )*{} = "l";
(  0, -4 )*{} = "m";
(  9, -4 )*{} = "r";
(  0, -7 )*{\boxed{\!k{-}2\!}} = "*";
{ \ar@{-} "root"; "l" };
{ \ar@{-} "root"; "m" };
{ \ar@{-} "root"; "r" };
(  6, -8 )*{} = "rl";
(  9, -8 )*{} = "rm";
( 12, -8 )*{} = "rr";
{ \ar@{-} "r"; "rl" };
{ \ar@{-} "r"; "rm" };
{ \ar@{-} "r"; "rr" };
\end{xy}
}
&
\boxed{k}
\end{array}
\]
We say a leaf is left (middle, right) if it is the left (middle, right) child of its parent.
The ordered triples above give the number of left (middle, right) leaves.
We have
\[
f_1 = t \circ_3 ( t \circ_3 M_{k-2} ),
\;\;
f_2 = t \circ_3 M_{k-1},
\;\;
f_i = ( t \circ_3 ( t \circ_3 M_{k-i} ) ) \circ_2 M_{i-2} \;\; ( 3 \le i \le k ).
\]
For $3 \le i \le k{-}1$, we obtain $f_{i+1}$ from $f_i$ by moving the bottom $t$
of the right-right subtree to the middle subtree.
We will show that $f_1$, $\dots$, $f_{k+1}$ form a basis of $\tpa(2k{+}1)$.
For linear independence, we simply observe that no $f_i$ ($1 \le i \le k$)
can be reduced using any Gr\"obner basis element $\alpha$, $\beta$, $\eta$, $\theta$, $\nu$.

To prove that $f_1$, $\dots$, $f_{k+1}$ span $\tpa(2k{+}1)$ we use induction on $k \ge 3$.
Basis: Lemma \ref{arity1357} gives
$f_1 = T_3$, $f_2 = T_4$, $f_3 = T_5$, $f_4 = T_6$.
Induction:
Assume that $f_1, \dots, f_{k+1}$ span $\tpa(2k{+}1)$ 
and write $f'_1, \dots, f'_{k+2}$ for the monomials of weight $k{+}1$.
For each $f_i$ in $\tpa(2k{+}1)$ we obtain monomials of weight $k{+}1$ in two ways:
\[
(1) \;\;
\text{$t \circ_j f_i$ for $j \in [3]$, $i \in [k{+}1]$};
\qquad
(2) \;\;
\text{$f_i \circ_j t$ for $i \in [k{+}1]$, $j \in [2k{+}1]$}.
\]

Case 1:
If $j = 1$ then $t \circ_1 f_i$ reduces by $\alpha$.
If $j = 2$ then $t \circ_2 f_i$ reduces by $\beta$ for $i \in [k]$,
and $t \circ_2 f_{k+1} = M_{k+1} = f'_{k+2}$.
If $j = 3$ then
$t \circ_3 f_1$ reduces using $\nu$,
$t \circ_3 f_2 = f'_1$,
$t \circ_3 f_i$ reduces using $\theta$ for $i \in [k]$, and
$t \circ_3 f_{k+1} = f'_2$.

Case 2 has three subcases depending on where we attach $t$.
If we attach to a left leaf of $f_i$ then the result reduces by $\alpha$.
If we attach to a right leaf then
for $f_1$ the result reduces by $\nu$ or $\beta$,
for $f_2, \dots, f_k$ the result reduces by $\beta$ or $\theta$,
and for $f_{k+1}$ either we obtain $f'_{k+1}$ or the result reduces by $\beta$.
If we attach to a middle leaf of $f_1$ then we obtain either $f'_3$ or $f'_1$ or the result
reduces by $\theta$.
If we attach to a middle leaf of $f_2$ then we obtain either $f'_3$ or $f'_2$.
If we attach to a middle leaf of $f_i$ for $3 \le i \le k$ then we obtain
$f'_j$ for $3 \le j \le k{+}1$ or the result reduces by $\theta$.
If we attach to the middle leaf of $f_{k+1}$ then we obtain $f'_{k+2}$.
\end{proof}

We now assume that the ternary operation $t$ has odd (homological) degree.
Thus every tree has even or odd parity depending the number of internal nodes.
We write $|f| \in \{0,1\}$ for the parity of $f$.
We must include Koszul signs in the relations for partial compositions:
transposing two odd elements introduces a minus sign.

\begin{lemma}
\label{signs}
\emph{\cite[Def.~1.1]{Markl}}
If $p, q, r \in \mathcal{T}$ then 
\[
( p \circ_i q ) \circ_j r
=
\begin{cases}
\; p \circ_i ( q \circ_{j-i+1} r ) &\quad i \le j \le i+\ell(q)-1
\\
\;
(-1)^{|q||r|} \,
( p \circ_{j-\ell(q)+1} r ) \circ_i q &\quad i+\ell(q) \le j \le \ell(p)+\ell(q)-1
\\
\;
(-1)^{|q||r|} \,
( p \circ_j r ) \circ_{i+\ell(r)-1} q &\quad 1 \le j \le i-1
\end{cases}
\]
\end{lemma}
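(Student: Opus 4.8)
The plan is to reduce the statement to the unsigned Lemma \ref{pcrules} together with the Koszul sign rule, since the three cases here produce exactly the same trees as there; the only new content is the sign $(-1)^{|q||r|}$ in the second and third cases. Concretely, both sides of each identity are built from $p$, $q$, $r$ by the same two graftings, so as elements of $\trees$ they agree by Lemma \ref{pcrules}; passing to $\freeternary$ with Koszul signs, the two sides can differ only by the sign of the transposition needed to put the tensor factors $q$ and $r$ into a common order. It therefore suffices to determine, in each case, whether building the left-hand side versus the right-hand side interchanges $q$ and $r$, and this is governed by the two defining axioms of a graded nonsymmetric operad: sequential associativity (no sign) and parallel commutativity (sign $(-1)^{|q||r|}$).

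First I would dispatch the first case, $i \le j \le i + \ell(q) - 1$. Here leaf $j$ of $p \circ_i q$ lies in the grafted copy of $q$, so on both sides $r$ is composed into $q$ and then $q$ into $p$; the factors occur in the order $q$, then $r$, and no transposition takes place. Equivalently this is the sequential associativity axiom $(p \circ_i q)\circ_j r = p \circ_i(q \circ_{j-i+1}r)$, which carries no sign.

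For the second and third cases $r$ is grafted into a leaf of $p$ disjoint from $q$, so the two graftings $\circ_i q$ and $\circ_{\bullet} r$ occur at independent positions of $p$; reversing the order in which they are performed interchanges the factors $q$ and $r$, and the Koszul rule contributes $(-1)^{|q||r|}$. In the second case, $j \ge i + \ell(q)$ places $r$ to the right of $q$; the leaves of the grafted $q$ occupy positions $i,\dots,i+\ell(q)-1$, so a leaf of $p$ lying to the right of $i$ is renumbered by $+(\ell(q)-1)$, giving the index $j-\ell(q)+1$ on $r$ and the identity $(p\circ_i q)\circ_j r = (-1)^{|q||r|}(p\circ_{j-\ell(q)+1}r)\circ_i q$. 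The third case, $1 \le j \le i-1$, is the mirror image obtained by exchanging the roles of $(i,q)$ and $(j,r)$ in the parallel axiom: grafting $r$ to the left of position $i$ pushes the insertion point of $q$ right by $\ell(r)-1$, producing the index $i+\ell(r)-1$ on $q$; since the sign $(-1)^{|q||r|}$ is its own inverse, solving the axiom for $(p\circ_i q)\circ_j r$ yields the same sign.

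The routine index bookkeeping (the shifts by $\ell(q)-1$ and $\ell(r)-1$) is already contained in Lemma \ref{pcrules}, so the only genuine point to check --- the main obstacle --- is that the Koszul sign involves $|q|$ and $|r|$ alone and not $|p|$. This holds because reordering the two graftings interchanges $q$ and $r$ with each other but moves neither past $p$; only the tensor factors $q$ and $r$ are transposed, so the exponent is exactly $|q||r|$, as claimed.
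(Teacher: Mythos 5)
Your proposal is correct and rests on exactly the same foundation as the paper, which offers no argument of its own but simply cites Markl's Definition~1.1 --- that is, these three identities \emph{are} the defining axioms of a nonsymmetric operad in graded vector spaces, with the index bookkeeping as in Lemma~\ref{pcrules}. Your derivation (sequential axiom for case 1 with no sign; parallel axiom for cases 2--3, where only the tensor factors $q$ and $r$ are transposed, so the Koszul sign is $(-1)^{|q||r|}$ and never involves $|p|$) is precisely the unfolding of that cited definition.
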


\begin{theorem}
\label{oddtheorem}
The relation $\alpha$ is a Gr\"obner basis for $\langle \alpha \rangle$ 
in the free nonsymmetric operad with a ternary operation of odd homological degree.
\end{theorem}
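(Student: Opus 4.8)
The plan is to verify the S-polynomial criterion (the final, unlabelled Theorem of Section~2): the singleton $\{\alpha\}$ is a Gr\"obner basis for $\langle\alpha\rangle$ if and only if, for every SCM $f$ of $\ell m(\alpha)$ with itself, the reduction of $S(f,\alpha,\alpha)$ by $\alpha$ is $0$. The set of SCMs of $\ell m(\alpha)=t\circ_1 t$ with itself is a purely combinatorial datum depending only on this leading monomial, which is the same whatever the parity of $t$; so by the analysis in Lemma~\ref{degree7} there is exactly one such SCM, of arity~$7$, satisfying $\ell m(\alpha)\circ_1 t=(t\circ_1 t)\circ_1 t=t\circ_1(t\circ_1 t)=t\circ_1\ell m(\alpha)$. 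Hence it suffices to show that the single S-polynomial $\alpha\circ_1 t-t\circ_1\alpha$ reduces to $0$ modulo $\alpha$, now with Koszul signs included.

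First I would form the S-polynomial. The leading terms $(t\circ_1 t)\circ_1 t$ and $t\circ_1(t\circ_1 t)$ coincide and cancel, leaving
\[
S=(t\circ_2 t)\circ_1 t+(t\circ_3 t)\circ_1 t-t\circ_1(t\circ_2 t)-t\circ_1(t\circ_3 t).
\]
Next I would rewrite each monomial so that its top subtree is $\ell m(\alpha)=t\circ_1 t$, using the signed associativity rules of Lemma~\ref{signs}. Since $|t|=1$, the second and third cases each contribute a factor $(-1)^{|t||t|}=-1$; in particular $(t\circ_2 t)\circ_1 t=-(t\circ_1 t)\circ_4 t$ and $(t\circ_3 t)\circ_1 t=-(t\circ_1 t)\circ_5 t$, while $t\circ_1(t\circ_2 t)=(t\circ_1 t)\circ_2 t$ and $t\circ_1(t\circ_3 t)=(t\circ_1 t)\circ_3 t$ carry no sign. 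Therefore
\[
S=-(t\circ_1 t)\circ_2 t-(t\circ_1 t)\circ_3 t-(t\circ_1 t)\circ_4 t-(t\circ_1 t)\circ_5 t,
\]
in which all four coefficients now agree; this is the decisive departure from the even case of Lemma~\ref{degree7}, where the first two terms appear with the opposite sign.

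Then I would reduce each monomial by the rewrite rule \eqref{frr}, $t\circ_1 t\mapsto -t\circ_2 t-t\circ_3 t$, producing the eight monomials $(t\circ_a t)\circ_j t$ with $a\in\{2,3\}$ and $j\in\{2,3,4,5\}$, all with coefficient $+1$. Normalizing these by Lemma~\ref{signs}, the sign-sensitive identity $(t\circ_3 t)\circ_2 t=-(t\circ_2 t)\circ_5 t$ makes those two terms cancel outright (in the even case they coincide and instead reinforce). Of the remaining six terms, exactly two still contain $t\circ_1 t$ as a subtree, namely $(t\circ_2 t)\circ_2 t=t\circ_2(t\circ_1 t)$ and $(t\circ_3 t)\circ_3 t=t\circ_3(t\circ_1 t)$; reducing these by $\alpha$ once more rewrites them as $-t\circ_2(t\circ_2 t)-t\circ_2(t\circ_3 t)$ and $-t\circ_3(t\circ_2 t)-t\circ_3(t\circ_3 t)$, and after collecting like monomials every surviving term cancels, so the reduction of $S$ is $0$. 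By the criterion, $\{\alpha\}$ is then a Gr\"obner basis.

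The routine part of this argument is the bookkeeping of partial-composition indices through Lemma~\ref{signs}. The one genuinely delicate point, and the crux of the whole theorem, is the Koszul sign in the identity $(t\circ_3 t)\circ_2 t=-(t\circ_2 t)\circ_5 t$ together with the sign flips in $S$ itself: these turn the $-2\beta$ of the even computation into $0$, which is precisely why the auxiliary relations $\beta,\eta,\theta,\nu$ are no longer forced and $\alpha$ alone suffices.
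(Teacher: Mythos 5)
Your proof is correct and is essentially the paper's own argument: the unique self-overlap of $\ell m(\alpha)=t\circ_1 t$, the signed rewriting of the S-polynomial via Lemma~\ref{signs} so that all four monomials $(t\circ_1 t)\circ_j t$ ($j=2,3,4,5$) enter with the same sign, reduction by $\alpha$ to eight terms of coefficient $+1$, the Koszul cancellation $(t\circ_2 t)\circ_5 t=-(t\circ_3 t)\circ_2 t$, and a final reduction of $t\circ_2(t\circ_1 t)$ and $t\circ_3(t\circ_1 t)$ that annihilates the remaining terms. The only slip is the parenthetical claim that in the even case the pair $(t\circ_2 t)\circ_5 t$, $(t\circ_3 t)\circ_2 t$ ``instead reinforce'': there they also cancel (equal trees carrying opposite coefficients), and the even/odd divergence---$-2\beta$ versus $0$---comes from the signs of the other six terms; this misstatement is immaterial to your odd-case argument.
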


\begin{proof}
The first few steps are identical to those for an even operation.
The leading monomial $\ell m(\alpha) = t \circ_1 t$ overlaps with itself in one way
to produce this SCM:
\[
\adjustbox{valign=m}{
\begin{xy}
(  0,  0 )*{} = "root";
( -3, -4 )*{} = "l";
(  0, -4 )*{} = "m";
(  3, -4 )*{} = "r";
{ \ar@{-} "root"; "l" };
{ \ar@{-} "root"; "m" };
{ \ar@{-} "root"; "r" };
( -6, -8 )*{} = "ll";
( -3, -8 )*{} = "lm";
(  0, -8 )*{} = "lr";
{ \ar@{-} "l"; "ll" };
{ \ar@{-} "l"; "lm" };
{ \ar@{-} "l"; "lr" };
( -9, -12 )*{} = "lll";
( -6, -12 )*{} = "llm";
( -3, -12 )*{} = "llr";
{ \ar@{-} "ll"; "lll" };
{ \ar@{-} "ll"; "llm" };
{ \ar@{-} "ll"; "llr" };
\end{xy}
}
= \;
\ell m(\alpha) \circ_1 t
\; = \;
t \circ_1 \ell m(\alpha).
\]
We apply the same partial compositions to $\alpha$ instead of $\ell m(\alpha)$:
\begin{align*}
\alpha \circ_1 t
&=
( t \circ_1 t ) \circ_1 t + ( t \circ_2 t ) \circ_1 t + ( t \circ_3 t ) \circ_1 t
=
\!\!\!\!\!\!\!\!
\adjustbox{valign=m}{
\begin{xy}
(  0,  0 )*{\bullet} = "root";
( -3, -4 )*{} = "l";
(  0, -4 )*{} = "m";
(  3, -4 )*{} = "r";
{ \ar@{-} "root"; "l" };
{ \ar@{-} "root"; "m" };
{ \ar@{-} "root"; "r" };
( -6, -8 )*{} = "ll";
( -3, -8 )*{} = "lm";
(  0, -8 )*{} = "lr";
{ \ar@{-} "l"; "ll" };
{ \ar@{-} "l"; "lm" };
{ \ar@{-} "l"; "lr" };
( -9, -12 )*{} = "lll";
( -6, -12 )*{} = "llm";
( -3, -12 )*{} = "llr";
{ \ar@{-} "ll"; "lll" };
{ \ar@{-} "ll"; "llm" };
{ \ar@{-} "ll"; "llr" };
\end{xy}
}
\bigplus
\!\!\!\!
\adjustbox{valign=m}{
\begin{xy}
(  0,  0 )*{} = "root";
( -9, -4 )*{} = "l";
(  0, -4 )*{} = "m";
(  3, -4 )*{} = "r";
{ \ar@{-} "root"; "l" };
{ \ar@{-} "root"; "m" };
{ \ar@{-} "root"; "r" };
( -12, -8 )*{} = "ll";
(  -9, -8 )*{} = "lm";
(  -6, -8 )*{} = "lr";
{ \ar@{-} "l"; "ll" };
{ \ar@{-} "l"; "lm" };
{ \ar@{-} "l"; "lr" };
( -3, -8 )*{} = "ml";
(  0, -8 )*{} = "mm";
(  3, -8 )*{} = "mr";
{ \ar@{-} "m"; "ml" };
{ \ar@{-} "m"; "mm" };
{ \ar@{-} "m"; "mr" };
\end{xy}
}
\bigplus
\!\!\!\!
\adjustbox{valign=m}{
\begin{xy}
(  0,  0 )*{} = "root";
( -4.5, -4 )*{} = "l";
(  0, -4 )*{} = "m";
(  4.5, -4 )*{} = "r";
{ \ar@{-} "root"; "l" };
{ \ar@{-} "root"; "m" };
{ \ar@{-} "root"; "r" };
( -7.5, -8 )*{} = "ll";
( -4.5, -8 )*{} = "lm";
( -1.5, -8 )*{} = "lr";
{ \ar@{-} "l"; "ll" };
{ \ar@{-} "l"; "lm" };
{ \ar@{-} "l"; "lr" };
(  1.5, -8 )*{} = "rl";
(  4.5, -8 )*{} = "rm";
(  7.5, -8 )*{} = "rr";
{ \ar@{-} "r"; "rl" };
{ \ar@{-} "r"; "rm" };
{ \ar@{-} "r"; "rr" };
\end{xy}
}
\\[1mm]
t \circ_1 \alpha
&=
t \circ_1 ( t \circ_1 t ) + t \circ_1 ( t \circ_2 t ) + t \circ_1 ( t \circ_3 t )
=
\!\!\!\!\!\!\!\!
\adjustbox{valign=m}{
\begin{xy}
(  0,  0 )*{\bullet} = "root";
( -3, -4 )*{} = "l";
(  0, -4 )*{} = "m";
(  3, -4 )*{} = "r";
{ \ar@{-} "root"; "l" };
{ \ar@{-} "root"; "m" };
{ \ar@{-} "root"; "r" };
( -6, -8 )*{} = "ll";
( -3, -8 )*{} = "lm";
(  0, -8 )*{} = "lr";
{ \ar@{-} "l"; "ll" };
{ \ar@{-} "l"; "lm" };
{ \ar@{-} "l"; "lr" };
( -9, -12 )*{} = "lll";
( -6, -12 )*{} = "llm";
( -3, -12 )*{} = "llr";
{ \ar@{-} "ll"; "lll" };
{ \ar@{-} "ll"; "llm" };
{ \ar@{-} "ll"; "llr" };
\end{xy}
}
\bigplus
\adjustbox{valign=m}{
\begin{xy}
(  0,  0 )*{} = "root";
( -3, -4 )*{} = "l";
(  0, -4 )*{} = "m";
(  3, -4 )*{} = "r";
{ \ar@{-} "root"; "l" };
{ \ar@{-} "root"; "m" };
{ \ar@{-} "root"; "r" };
( -6, -8 )*{} = "ll";
( -3, -8 )*{} = "lm";
(  0, -8 )*{} = "lr";
{ \ar@{-} "l"; "ll" };
{ \ar@{-} "l"; "lm" };
{ \ar@{-} "l"; "lr" };
( -6, -12 )*{} = "lml";
( -3, -12 )*{} = "lmm";
(  0, -12 )*{} = "lmr";
{ \ar@{-} "lm"; "lml" };
{ \ar@{-} "lm"; "lmm" };
{ \ar@{-} "lm"; "lmr" };
\end{xy}
}
\bigplus
\adjustbox{valign=m}{
\begin{xy}
(  0,  0 )*{} = "root";
( -3, -4 )*{} = "l";
(  0, -4 )*{} = "m";
(  3, -4 )*{} = "r";
{ \ar@{-} "root"; "l" };
{ \ar@{-} "root"; "m" };
{ \ar@{-} "root"; "r" };
( -6, -8 )*{} = "ll";
( -3, -8 )*{} = "lm";
(  0, -8 )*{} = "lr";
{ \ar@{-} "l"; "ll" };
{ \ar@{-} "l"; "lm" };
{ \ar@{-} "l"; "lr" };
( -3, -12 )*{} = "lrl";
(  0, -12 )*{} = "lrm";
(  3, -12 )*{} = "lrr";
{ \ar@{-} "lr"; "lrl" };
{ \ar@{-} "lr"; "lrm" };
{ \ar@{-} "lr"; "lrr" };
\end{xy}
}
\end{align*}
The difference is this (non-reduced) S-polynomial:
\begin{equation}
\label{oddtrees}
\begin{array}{l}
\alpha \circ_1 t - t \circ_1 \alpha
=
( t \circ_2 t ) \circ_1 t + ( t \circ_3 t ) \circ_1 t
- t \circ_1 ( t \circ_2 t ) - t \circ_1 ( t \circ_3 t )
=
\\
\adjustbox{valign=m}{
\begin{xy}
(  0,  0 )*{} = "root";
( -9, -4 )*{} = "l";
(  0, -4 )*{} = "m";
(  9, -4 )*{} = "r";
{ \ar@{-} "root"; "l" };
{ \ar@{-} "root"; "m" };
{ \ar@{-} "root"; "r" };
( -12, -8 )*{} = "ll";
(  -9, -8 )*{} = "lm";
(  -6, -8 )*{} = "lr";
{ \ar@{-} "l"; "ll" };
{ \ar@{-} "l"; "lm" };
{ \ar@{-} "l"; "lr" };
( -3, -8 )*{} = "ml";
(  0, -8 )*{} = "mm";
(  3, -8 )*{} = "mr";
{ \ar@{-} "m"; "ml" };
{ \ar@{-} "m"; "mm" };
{ \ar@{-} "m"; "mr" };
\end{xy}
}
\bigplus
\adjustbox{valign=m}{
\begin{xy}
(  0,  0 )*{} = "root";
( -6, -4 )*{} = "l";
(  0, -4 )*{} = "m";
(  6, -4 )*{} = "r";
{ \ar@{-} "root"; "l" };
{ \ar@{-} "root"; "m" };
{ \ar@{-} "root"; "r" };
( -9, -8 )*{} = "ll";
( -6, -8 )*{} = "lm";
( -3, -8 )*{} = "lr";
{ \ar@{-} "l"; "ll" };
{ \ar@{-} "l"; "lm" };
{ \ar@{-} "l"; "lr" };
(  3, -8 )*{} = "rl";
(  6, -8 )*{} = "rm";
(  9, -8 )*{} = "rr";
{ \ar@{-} "r"; "rl" };
{ \ar@{-} "r"; "rm" };
{ \ar@{-} "r"; "rr" };
\end{xy}
}
\bigminus
\adjustbox{valign=m}{
\begin{xy}
(  0,  0 )*{\bullet} = "root";
( -3, -4 )*{} = "l";
(  0, -4 )*{} = "m";
(  3, -4 )*{} = "r";
{ \ar@{-} "root"; "l" };
{ \ar@{-} "root"; "m" };
{ \ar@{-} "root"; "r" };
( -6, -8 )*{} = "ll";
( -3, -8 )*{} = "lm";
(  0, -8 )*{} = "lr";
{ \ar@{-} "l"; "ll" };
{ \ar@{-} "l"; "lm" };
{ \ar@{-} "l"; "lr" };
( -6, -12 )*{} = "lml";
( -3, -12 )*{} = "lmm";
(  0, -12 )*{} = "lmr";
{ \ar@{-} "lm"; "lml" };
{ \ar@{-} "lm"; "lmm" };
{ \ar@{-} "lm"; "lmr" };
\end{xy}
}
\bigminus
\adjustbox{valign=m}{
\begin{xy}
(  0,  0 )*{} = "root";
( -3, -4 )*{} = "l";
(  0, -4 )*{} = "m";
(  3, -4 )*{} = "r";
{ \ar@{-} "root"; "l" };
{ \ar@{-} "root"; "m" };
{ \ar@{-} "root"; "r" };
( -6, -8 )*{} = "ll";
( -3, -8 )*{} = "lm";
(  0, -8 )*{} = "lr";
{ \ar@{-} "l"; "ll" };
{ \ar@{-} "l"; "lm" };
{ \ar@{-} "l"; "lr" };
( -3, -12 )*{} = "lrl";
(  0, -12 )*{} = "lrm";
(  3, -12 )*{} = "lrr";
{ \ar@{-} "lr"; "lrl" };
{ \ar@{-} "lr"; "lrm" };
{ \ar@{-} "lr"; "lrr" };
\end{xy}
}
\end{array}
\end{equation}
Lemma \ref{signs} (case 3), $p = q = r = t$, with $i, j = 2, 1$ and $i, j = 3, 1$ gives
\[
( t \circ_2 t ) \circ_1 t = {} - ( t \circ_1 t ) \circ_4 t,
\qquad
( t \circ_3 t ) \circ_1 t = {} - ( t \circ_1 t ) \circ_5 t.
\]
Lemma \ref{signs} (case 1), $p = q = r = t$, with $i, j = 2, 2$ and $i, j = 1, 3$ gives
\[
{} - t \circ_1 ( t \circ_2 t ) = {} -
( t \circ_1 t ) \circ_2 t.
\qquad
{} - t \circ_1 ( t \circ_3 t ) = {} -
( t \circ_1 t ) \circ_3 t.
\]
Therefore \eqref{oddtrees} equals
\[
{}
- ( t \circ_1 t ) \circ_4 t
- ( t \circ_1 t ) \circ_5 t
- ( t \circ_1 t ) \circ_2 t
- ( t \circ_1 t ) \circ_3 t.
\]
We reduce each monomial using $\alpha$ and obtain
\begin{equation}
\label{oddreduce1}
\begin{array}{l}
( t \circ_2 t ) \circ_4 t
+ ( t \circ_3 t ) \circ_4 t
+ ( t \circ_2 t ) \circ_5 t
+ ( t \circ_3 t ) \circ_5 t
\\[1mm]
{}
+ ( t \circ_2 t ) \circ_2 t
+ ( t \circ_3 t ) \circ_2 t
+ ( t \circ_2 t ) \circ_3 t
+ ( t \circ_3 t ) \circ_3 t.
\end{array}
\end{equation}
Terms 3, 6 cancel by Lemma \ref{signs} (case 2),
$( t \circ_2 t ) \circ_5 t = {} - ( t \circ_3 t ) \circ_2 t$,
leaving
\begin{align*}
&
( t \circ_2 t ) \circ_4 t
+ ( t \circ_3 t ) \circ_4 t
+ ( t \circ_3 t ) \circ_5 t
+ ( t \circ_2 t ) \circ_2 t
+ ( t \circ_2 t ) \circ_3 t
+ ( t \circ_3 t ) \circ_3 t
=
\\
&
\adjustbox{valign=m}{
\begin{xy}
(  0,  0 )*{} = "root";
( -3, -4 )*{} = "l";
(  0, -4 )*{} = "m";
(  3, -4 )*{} = "r";
{ \ar@{-} "root"; "l" };
{ \ar@{-} "root"; "m" };
{ \ar@{-} "root"; "r" };
( -3, -8 )*{} = "ml";
(  0, -8 )*{} = "mm";
(  3, -8 )*{} = "mr";
{ \ar@{-} "m"; "ml" };
{ \ar@{-} "m"; "mm" };
{ \ar@{-} "m"; "mr" };
(  0, -12 )*{} = "mrl";
(  3, -12 )*{} = "mrm";
(  6, -12 )*{} = "mrr";
{ \ar@{-} "mr"; "mrl" };
{ \ar@{-} "mr"; "mrm" };
{ \ar@{-} "mr"; "mrr" };
\end{xy}
}
\bigplus
\adjustbox{valign=m}{
\begin{xy}
(  0,  0 )*{} = "root";
( -3, -4 )*{} = "l";
(  0, -4 )*{} = "m";
(  3, -4 )*{} = "r";
{ \ar@{-} "root"; "l" };
{ \ar@{-} "root"; "m" };
{ \ar@{-} "root"; "r" };
(  0, -8 )*{} = "rl";
(  3, -8 )*{} = "rm";
(  6, -8 )*{} = "rr";
{ \ar@{-} "r"; "rl" };
{ \ar@{-} "r"; "rm" };
{ \ar@{-} "r"; "rr" };
(  0, -12 )*{} = "rml";
(  3, -12 )*{} = "rmm";
(  6, -12 )*{} = "rmr";
{ \ar@{-} "rm"; "rml" };
{ \ar@{-} "rm"; "rmm" };
{ \ar@{-} "rm"; "rmr" };
\end{xy}
}
\bigplus
\adjustbox{valign=m}{
\begin{xy}
(  0,  0 )*{} = "root";
( -3, -4 )*{} = "l";
(  0, -4 )*{} = "m";
(  3, -4 )*{} = "r";
{ \ar@{-} "root"; "l" };
{ \ar@{-} "root"; "m" };
{ \ar@{-} "root"; "r" };
(  0, -8 )*{} = "rl";
(  3, -8 )*{} = "rm";
(  6, -8 )*{} = "rr";
{ \ar@{-} "r"; "rl" };
{ \ar@{-} "r"; "rm" };
{ \ar@{-} "r"; "rr" };
(  3, -12 )*{} = "rrl";
(  6, -12 )*{} = "rrm";
(  9, -12 )*{} = "rrr";
{ \ar@{-} "rr"; "rrl" };
{ \ar@{-} "rr"; "rrm" };
{ \ar@{-} "rr"; "rrr" };
\end{xy}
}
\bigplus
\adjustbox{valign=m}{
\begin{xy}
(  0,  0 )*{\bullet} = "root";
( -3, -4 )*{} = "l";
(  0, -4 )*{} = "m";
(  3, -4 )*{} = "r";
{ \ar@{-} "root"; "l" };
{ \ar@{-} "root"; "m" };
{ \ar@{-} "root"; "r" };
( -3, -8 )*{} = "ml";
(  0, -8 )*{} = "mm";
(  3, -8 )*{} = "mr";
{ \ar@{-} "m"; "ml" };
{ \ar@{-} "m"; "mm" };
{ \ar@{-} "m"; "mr" };
( -6, -12 )*{} = "mll";
( -3, -12 )*{} = "mlm";
(  0, -12 )*{} = "mlr";
{ \ar@{-} "ml"; "mll" };
{ \ar@{-} "ml"; "mlm" };
{ \ar@{-} "ml"; "mlr" };
\end{xy}
}
\bigplus
\adjustbox{valign=m}{
\begin{xy}
(  0,  0 )*{} = "root";
( -3, -4 )*{} = "l";
(  0, -4 )*{} = "m";
(  3, -4 )*{} = "r";
{ \ar@{-} "root"; "l" };
{ \ar@{-} "root"; "m" };
{ \ar@{-} "root"; "r" };
( -3, -8 )*{} = "ml";
(  0, -8 )*{} = "mm";
(  3, -8 )*{} = "mr";
{ \ar@{-} "m"; "ml" };
{ \ar@{-} "m"; "mm" };
{ \ar@{-} "m"; "mr" };
( -3, -12 )*{} = "mml";
(  0, -12 )*{} = "mmm";
(  3, -12 )*{} = "mmr";
{ \ar@{-} "mm"; "mml" };
{ \ar@{-} "mm"; "mmm" };
{ \ar@{-} "mm"; "mmr" };
\end{xy}
}
\bigplus
\adjustbox{valign=m}{
\begin{xy}
(  0,  0 )*{} = "root";
( -3, -4 )*{} = "l";
(  0, -4 )*{} = "m";
(  3, -4 )*{} = "r";
{ \ar@{-} "root"; "l" };
{ \ar@{-} "root"; "m" };
{ \ar@{-} "root"; "r" };
(  0, -8 )*{} = "rl";
(  3, -8 )*{} = "rm";
(  6, -8 )*{} = "rr";
{ \ar@{-} "r"; "rl" };
{ \ar@{-} "r"; "rm" };
{ \ar@{-} "r"; "rr" };
( -3, -12 )*{} = "rll";
(  0, -12 )*{} = "rlm";
(  3, -12 )*{} = "rlr";
{ \ar@{-} "rl"; "rll" };
{ \ar@{-} "rl"; "rlm" };
{ \ar@{-} "rl"; "rlr" };
\end{xy}
}
\end{align*}
We reduce terms 4, 6 using $\alpha$; this cancels terms 1, 5 and terms 2, 3.
\end{proof}

\begin{theorem}
For an odd operation, the dimension of the ternary partially associative operad 
the binary Catalan number (in the weight grading).
\end{theorem}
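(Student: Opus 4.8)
The plan is to count normal monomials. By Theorem~\ref{oddtheorem} the singleton $\{\alpha\}$ is a Gr\"obner basis for $\langle\alpha\rangle$ in the odd case, so a basis for the weight-$k$ component of $\tpa = \freeternary/\langle\alpha\rangle$ is the set of ternary trees of weight $k$ that are not divisible by $\ell m(\alpha) = t\circ_1 t$. The Koszul signs alter the structure constants but not which monomials are normal, so they do not change the dimension. First I would translate divisibility by $t\circ_1 t$ into a local condition on trees: a tree contains $t\circ_1 t$ as a subtree exactly when some internal node has an internal node as its leftmost child. Hence the normal trees are precisely those in which every internal node has a leaf as its leftmost child.

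Next I would decompose normal trees recursively. Write $a_k$ for the number of normal ternary trees of weight $k$, so that $a_0 = 1$ (the single leaf) and $a_1 = 1$ (the basic tree $t$). For $k \ge 1$ a normal tree consists of an internal root and its three children; normality forces the leftmost child to be a leaf, while the second and third children are themselves normal trees, of some weights $i$ and $j$ with $i + j = k-1$. This is a bijection between normal trees of weight $k \ge 1$ and ordered pairs of normal trees of total weight $k-1$, giving the convolution recurrence
\[
a_k = \sum_{i+j=k-1} a_i\, a_j \quad (k \ge 1), \qquad a_0 = 1.
\]

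Finally I would recognize the Catalan recurrence. Passing to the generating function $A(x) = \sum_{k \ge 0} a_k x^k$, the recurrence is equivalent to $A(x) = 1 + x\,A(x)^2$, whose solution $A(x) = \bigl(1 - \sqrt{1-4x}\,\bigr)/(2x)$ has coefficients $a_k = \tfrac{1}{k+1}\binom{2k}{k} = C_k$, the $k$-th Catalan number, which counts binary trees with $k$ internal nodes. The first values $a_0, a_1, a_2, a_3 = 1, 1, 2, 5$ agree with $C_0, \dots, C_3$; note that at weight $3$ this already exceeds the even-degree dimension $4$ recorded in Lemma~\ref{arity1357}, reflecting the absence in the odd case of the extra Gr\"obner basis elements $\beta, \eta, \theta, \nu$. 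The only point requiring genuine care is verifying that the leftmost-child decomposition is a weight-respecting bijection; once that is in hand the Catalan count is immediate, and this is the step I expect to carry the substance of the argument.
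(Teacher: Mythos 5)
Your proposal is correct and takes essentially the same route as the paper: by Theorem~\ref{oddtheorem} the singleton $\{\alpha\}$ is a Gr\"obner basis in the odd case, so the weight-$k$ dimension equals the number of normal monomials, namely ternary trees in which every internal node has a leaf as its leftmost child. The paper's one-sentence proof counts these via the evident bijection with binary trees of weight $w$ (delete the leftmost leaf of each internal node), whereas you make the routine counting step explicit through the convolution recurrence $a_k = \sum_{i+j=k-1} a_i a_j$ and the generating function $A(x) = 1 + xA(x)^2$; the substance of the argument is identical.
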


\begin{proof}
Relation $\alpha$ shows that any left subtree reduces,
so the dimension for weight $w$ is the number of binary trees of weight $w$.
\end{proof}



\end{document}